\numberwithin{equation}{section}
\numberwithin{figure}{section}
\theoremstyle{plain}
\newtheorem{theorem}{\protect\theoremname}[section]
\theoremstyle{definition}
\newtheorem{definition}[theorem]{\protect\definitionname}
\theoremstyle{definition}
\newtheorem*{definition*}{\protect\definitionname}
\theoremstyle{remark}
\newtheorem{remark}[theorem]{\protect\remarkname}
\theoremstyle{plain}
\newtheorem{lemma}[theorem]{\protect\lemmaname}
\theoremstyle{plain}
\newtheorem{corollary}[theorem]{\protect\corollaryname}
\theoremstyle{plain}
\newtheorem{prop}[theorem]{\protect\propositionname}
\theoremstyle{plain}
\newcounter{mysubequations}
\DeclarePairedDelimiter{\floor}{\lfloor}{\rfloor}
\DeclareMathOperator{\im}{Im }
\DeclareMathOperator{\vspan}{sp}
\DeclareMathOperator{\dist}{dist}
\DeclareMathOperator*{\argmax}{arg\,max}
\tikzstyle{vertex}=[circle, draw, fill=black, inner sep=0pt, minimum size=6pt]
\title{Universality of Packing Dimension Estimates for Spectral Measures of Quasiperiodic Operators: Monotone Potentials}
\author{Netanel Levi}
\date{\vspace{-5ex}}
\providecommand{\claimname}{Claim}
\providecommand{\corollaryname}{Corollary} 
\providecommand{\definitionname}{Definition}
\providecommand{\lemmaname}{Lemma}
\providecommand{\propositionname}{Proposition}
\providecommand{\remarkname}{Remark}
\providecommand{\theoremname}{Theorem}
\begin{document}
\maketitle
\sloppy
\begin{abstract}
	Let $H$ be a quasiperiodic Schr\"{o}dinger operator generated by a monotone potential, as defined in \cite{JK}. Following \cite{JLT}, we study the connection between the Lyapunov exponent $L\left(E\right)$, arithmetic properties of the frequency $\alpha$, and certain fractal-dimensional properties of the spectral measures of $H$.
\end{abstract}
\section{Introduction}
The purpose of this work is to complement the recent paper \cite{JLT} and study spectral continuity properties of quasiperiodic Schr\"{o}dinger operators acting on $\ell^2\left(\mathbb{Z}\right)$ which are generated by a $\gamma$-monotone potential. More specifically, we will study operators $H:\ell^2\left(\mathbb{Z}\right)\to\ell^2\left(\mathbb{Z}\right)$ of the form
\begin{equation}\label{eq_mono_op}
	\left(H\psi\right)\left(n\right)=\psi\left(n-1\right)+\psi\left(n+1\right)+f\left(x+n\alpha\right),
\end{equation}
where $x,\alpha\in\mathbb{R}$, and $f:\mathbb{R}\to\left[-\infty,\infty\right)$ is $1$-periodic and $\gamma$-monotone (see (\ref{eq_gamma_monotone}) for a precise definition). It was recently proved \cite{JK} that such operators exhibit a spectral phenomena called \textit{sharp arithmetic transition}, namely a change in the spectral type which depends on $L\left(E\right)$, the Lyapunov exponent, and $\beta\left(\alpha\right)$, a quantity related to the "measure of irrationality" of $\alpha$ (see \mbox{Section \ref{section_prelim}} for precise definitions). This phenomena was conjectured in in \cite{Jit} to hold in the case of the notable Almost Mathieu operator (AMO), given by
\begin{equation}\label{eq_AMO}
	\left(H\psi\right)\left(n\right)=\psi\left(n-1\right)+\psi\left(n+1\right)+2\lambda\cos\left(\theta+n\alpha\right),
\end{equation}
where $\theta$, $\alpha$ and $\lambda$ are called the \textit{phase}, \textit{frequency} and \textit{coupling constant} respectively. Sharp arithmetic transition was proved for AMO in \cite{AZY} in a measure-theoretic sense, namely for Lebesgue almost every phase. Then, in \cite{JW}, building on methods developed in \cite{Jit0} and subsequently in \cite{AJ,Jit2,LY}, the authors found a way of sharp analysis of frequency resonances which allowed them to obtain an arithmetic version of this transition, namely for a set of phases (of full Lebesgue measure) defined by a certain arithmetic condition. Using this method, the authors of \cite{JW} also determine the exact exponential asymptotics of the eigenfunctions. Sharp arithmetic transition was proved to be universal for other families of quasiperiodic operators. In \cite{JK2,JK}, the authors prove it for operators of the form (\ref{eq_mono_op}). Similarly to \cite{JW}, in \cite{JK} the authors prove localization by the method of dealing with frequency resonances, albeit using different techniques which are designed for operators of the form (\ref{eq_mono_op}), going back to \cite{JK2,Kach}. Sharp arithmetic transition was also proved to be universal for Type I operators \cite{GJ}, as well as for certain unitary analogues of the AMO \cite{CF,CFO,FY}.

Recently, this connection was taken further in \cite{JLT}, where the authors study certain fractal continuity properties of the spectral measures. These kind of properties are given, for example, by decomposing the spectral measures with respect to Hausdorff measures \cite{RT} or packing measures \cite{Cut}. It is well-known that positive Lyapunov exponent implies zero-dimensionality in the Hausdorff sense \cite{JL1}. However, much less is known about the packing dimension. Packing dimensional properties were studied in \cite{JZ}, where the authors prove lower bounds on the packing dimension, that are sharp in the case where $L\left(E\right)$ is much smaller than $\beta\left(\alpha\right)$. In \cite{JLT}, the authors develop general tools to systematically study such continuity properties, and subsequently apply these tools to study these properties for the AMO, near the transition. More precisely, they show that the packing dimension of the spectral measures is bounded from above by $2\left(1-\frac{L\left(E\right)}{\beta\left(\alpha\right)}\right)$. Roughly speaking, the authors develop partial decay estimates in the spirit of \cite{JW} for the case where $L\left(E\right)\leq\beta\left(\alpha\right)$. More precisely, even though in this case there is no uniform exponential decay of the generalized eigenfunctions, there are large intervals on which solutions have a localized behavior. The existence of such intervals leads to certain estimates of restricted $\ell^2$-norms of generalized eigenfunctions (see Proposition \ref{prop_green_omega_estimate}), which are known to be connected to fractal continuity properties of the spectral measures \cite{JL1,JL2,DKL}.

Even though in \cite{JLT} the authors focus on the AMO, it is mentioned there that the same techniques should also work for other quasiperiodic operators which exhibit a sharp arithmetic transition, and so these fractal-dimensional estimates in the singular continuous spectrum should be universal for such families. In this work, we exploit the techniques developed in \cite{JLT} along with decay estimates developed in \cite{JK} in order to verify this statement in the case of $\gamma$-monotone potentials.

Let us fix $\alpha\in\mathbb{R}$. Given $x\in\mathbb{R}$, we denote by $H\left(x\right)$ the operator given by (\ref{eq_mono_op}). For every $\varphi\in\ell^2\left(\mathbb{Z}\right)$, let $\mu_\varphi^x$ be the spectral measure of $\varphi$ w.r.t.\ $H\left(x\right)$ (see (\ref{eq_spec_meas_def})) and let $\mu^x=\mu^x_{\delta_0}+\mu^x_{\delta_1}$. Also let $\dim_P^+\left(\mu^x\right)$ denote its upper packing dimension (see Section \ref{section_dims}). Our main result is the following.

\begin{theorem}\label{main_thm_intro}
	For every Borel set $A\subseteq\mathbb{R}$, let $\mu^x|_A=\mu^x\left(\cdot\cap A\right)$. For every $x\in\mathbb{R}$, we have the following.
	\begin{enumerate}
		\item Suppose $L\left(E\right)\geq\beta\left(\alpha\right)$ for every $E\in A$. Then $\dim_P^+\left(\mu^x|_A\right)=0$.
		\item Suppose $L\left(E\right)<\beta\left(\alpha\right)$ for every $E\in A$ and let $L=\underset{E\in A}{\min}\,L\left(E\right)$. Then
		\begin{center}
			$\dim_P^+\left(\mu^x|_A\right)\leq 2\left(1-\frac{L}{\beta\left(\alpha\right)}\right)$.
		\end{center}
	\end{enumerate}
\end{theorem}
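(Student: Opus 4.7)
The plan is to adapt the two-step skeleton of \cite{JLT} to the $\gamma$-monotone setting. Step one is to establish suitable lower bounds for restricted $\ell^2$-norms $\|u\|_{[-N,N]}$ of polynomially bounded solutions of $H(x)u=Eu$; step two is to feed these bounds into Proposition \ref{prop_green_omega_estimate}, which in the style of \cite{JL1, JL2, DKL} converts them into upper bounds on the packing dimension of the spectral measure. The conceptual input of \cite{JLT} for step one is the sharp resonance analysis of the AMO from \cite{JW}; my task is to replace this input with the corresponding decay estimates for $\gamma$-monotone operators established in \cite{JK}, and to verify that they have the quantitative form required by the abstract machinery.

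For the first assertion I would proceed as follows. In the regime $L(E)\geq\beta(\alpha)$, the decay estimates of \cite{JK} provide essentially exponential decay of generalized eigenfunctions on arbitrarily large intervals, with a rate that survives even after accounting for the frequency resonances of scale $\beta(\alpha)$. Consequently $\|u\|_{[-N,N]}$ grows slower than any power of $N$, which is precisely the hypothesis under which the JL-type criterion forces packing dimension zero. Since the estimate holds pointwise in $E$, a countable-covering argument combined with the countable stability of the upper packing dimension of a measure transfers the statement from single energies to the entire Borel set $A$.

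For the second assertion the relevant input from \cite{JK} is a partial-decay picture: when $L(E)<\beta(\alpha)$, solutions exhibit controlled exponential decay at rate close to $L(E)$ on a collection of intervals whose total length is of order $(1-L(E)/\beta(\alpha))\cdot N$ inside $[-N,N]$, with the remaining portion carrying a resonance-type growth controlled by $\beta(\alpha)$. Combining these two behaviors produces a quantitative lower bound of the form $\|u\|_{[-N,N]}^{2}\gtrsim N^{2L(E)/\beta(\alpha)-o(1)}$, uniformly in $E\in A$ via the definition $L=\min_{E\in A}L(E)$. Plugging this into Proposition \ref{prop_green_omega_estimate} and invoking the packing-dimensional conversion of \cite{JL2, DKL} yields the desired bound $2(1-L/\beta(\alpha))$, the factor of $2$ reflecting the characteristic packing- (rather than Hausdorff-) dimensional translation.

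The main obstacle, I expect, will be extracting the correct quantitative form of the partial-decay statement from the JK analysis. The estimates of \cite{JK} are formulated with a view toward proving Anderson localization, and are therefore concerned with the existence of exponentially decaying eigenfunctions rather than with uniform resonance-sensitive bounds on every polynomially bounded solution. Reorganizing them into a statement that is uniform across $E\in A$, and that cleanly distinguishes ``decay intervals'' from ``resonant intervals'' with the correct length proportions, will require a careful re-examination of the JK machinery (and of the techniques of \cite{JK2, Kach} on which it rests), since the cosine-specific tools exploited in \cite{JW} are unavailable here and must be substituted by arguments driven by monotonicity of $f$. Once this reorganization is carried out, the remaining steps are a formal adaptation of the corresponding arguments in \cite{JLT}.
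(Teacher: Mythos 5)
There is a genuine gap: the conversion step you describe does not exist in the form you use it. Proposition \ref{prop_green_omega_estimate} goes in the opposite direction from what you assume -- its input is Green's function regularity of two adjacent points and its output is a lower bound on $\omega_+\left(L\right)=\max_\theta\|u_\theta\|_L^+\cdot\min_\theta\|u_\theta\|_L^+$; it is not a device turning restricted-norm lower bounds for a single solution into packing-dimension bounds. More fundamentally, the JL1/JL2/DKL subordinacy theory you invoke controls Hausdorff-type ($\limsup$, upper-derivative) quantities, while the upper packing dimension is governed by $\liminf$ quantities, and no classical criterion converts your bound $\|u\|_{[-N,N]}^2\gtrsim N^{2L\left(E\right)/\beta}$ into $\dim_P^+\leq 2\left(1-L/\beta\right)$. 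The route the paper takes is: lower bounds on the product $\omega_\pm\left(L\right)\geq L^{1+\frac{1}{2}\frac{L\left(E\right)}{t_1\beta}-\varepsilon}$ valid at \emph{every} sufficiently large scale $L$ (Lemma \ref{prod_L_norms_growth_lemma}, which requires a four-case analysis of $L$ against $q_n,q_{n+1}$, using Proposition \ref{prop_diophantine_transition} at Diophantine scales and, at Liouville scales, the partial decay estimate at resonant sites of Proposition \ref{prop_liouville_resonant_growth} together with the Wronskian identity (\ref{eq_const_wronskian})); then the KKL/DT bound of Theorem \ref{Appendix_thm} plus the Last--Simon bound (\ref{eq_last_simon}) to get $\im m_\theta^\pm\left(E+i\varepsilon\right)\geq\varepsilon^{-t}$ for all $t<\frac{L\left(E\right)}{2\beta-L\left(E\right)}$ (Lemma \ref{hl_borel_lower_bound}); then Proposition \ref{line_borel_hl_borel_prop} to pass to the whole-line $M$; and finally the $\liminf$ Borel-transform criterion of \cite{JLT}, Proposition \ref{packing_dim_borel_transform_prop}, which yields $\gamma_\mu^+\left(E\right)\leq\frac{2\eta}{2-\eta}$ and is the actual source of the factor $2$ in $2\left(1-L/\beta\right)$ (your ``packing-dimensional translation'' remark has no argument behind it), followed by Lemma \ref{packing_derivative_infinity_lemma} and Proposition \ref{prop_alpha_pack_supp}. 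The object you propose to bound -- the norm of one polynomially bounded solution -- is insufficient, since at resonant scales $\min_\theta\|u_\theta\|_L$ can be anomalously small and it is precisely the product $\min\cdot\max$ that must be controlled.

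Part 1 has a separate problem: the claim is for \emph{every} phase $x$ and includes the borderline $L\left(E\right)=\beta\left(\alpha\right)$, so you cannot rely on ``essentially exponential decay of generalized eigenfunctions,'' which is a localization-type statement available only for suitable phases and strictly supercritical $L\left(E\right)>\beta\left(\alpha\right)$. The paper obtains part 1 from the same Borel-transform chain: when $L\left(E\right)\geq\beta\left(\alpha\right)$ the admissible range of $t$ covers all of $\left(0,1\right)$, hence $\gamma_\mu^+\left(E\right)=0$ for $\mu^x$-a.e.\ $E$, and the passage from pointwise statements to the set $A$ is via the almost-everywhere statement of Theorem \ref{main_thm} combined with Proposition \ref{prop_alpha_pack_supp}, not via countable stability of the packing dimension. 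Your overall instinct -- substitute the \cite{JK} decay estimates for the \cite{JW} resonance analysis inside the \cite{JLT} machinery -- matches the paper, but the concrete pipeline you outline omits the ingredients (all-scale bounds on $\omega_\pm$, the KKL/DT theorem, and the JLT $\liminf$ criteria) that make the argument work.
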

\begin{remark}
	\begin{enumerate}
		\item Theorem \ref{main_thm_intro} is the analogue of \cite[Theorem 1.1]{JLT} in the monotone case.
		\item It is well-known (see, e.g.\ \cite[Proposition 2.2.1]{DF1}) that $\left\{\delta_0,\delta_1\right\}$ is cyclic for $H\left(x\right)$ and consequently every spectral measure $\mu_\varphi^x$ is absolutely continuous w.r.t.\ $\mu^x$.
		\item Note that if $L\leq\frac{\beta\left(\alpha\right)}{2}$, the statement of Theorem \ref{main_thm_intro} is vacuous.
	\end{enumerate}
\end{remark}

Similarly to \cite{JLT}, we also establish bounds on the R\'{e}nyi dimension of $\mu^x$, $D_{\mu^x}^+\left(q\right)$, which also describes certain fractal continuity properties of $\mu^x$ (see Section \ref{section_dims} for a precise definition).
\begin{theorem}\label{main_thm_intro_2}
	Let $\alpha\in\mathbb{R}\setminus\mathbb{Q}$ and let $A\subseteq\mathbb{R}$ be a Borel set. Denote $L=\underset{E\in A}{\min}\,L\left(E\right)$. Then for every $q\geq\frac{3}{2}$ and for every $x\in\mathbb{R}$, we have
	\begin{center}
		$D_{\mu^x|_A}^+\left(q\right)\leq\frac{2\beta\left(\alpha\right)-2L}{2\beta\left(\alpha\right)-L}$.
	\end{center}
\end{theorem}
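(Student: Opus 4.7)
The plan is to adapt the Rényi-dimension argument for AMO in \cite[Theorem 1.2]{JLT} to the monotone setting, substituting the sharp decay estimates from \cite{JK} for the Jitomirskaya-Wang estimates used in \cite{JLT}. Set $d := \frac{2\beta(\alpha)-2L}{2\beta(\alpha)-L}$ and write $S_q(\varepsilon) := \int_A \mu^x(B_\varepsilon(E))^{q-1}\, d\mu^x(E)$ for the partition function appearing in the log-ratio definition of $D^+_{\mu^x|_A}(q)$. Since $(q-1)\log\varepsilon < 0$ for $q > 1$ and $\varepsilon < 1$, the target bound $D^+_{\mu^x|_A}(q) \le d$ is equivalent to the \emph{lower} bound $S_q(\varepsilon) \ge \varepsilon^{(q-1)(d+o(1))}$, which must hold for all sufficiently small $\varepsilon > 0$ (not only along a subsequence). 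Thus, in contrast to the packing-dimension argument in Theorem \ref{main_thm_intro}, one must here produce \emph{lower} bounds on $\mu^x(B_\varepsilon(E))$ rather than upper ones.

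The main tool is Proposition \ref{prop_green_omega_estimate}, the Jitomirskaya-Last subordinacy-type estimate, which for $N \asymp 1/\varepsilon$ yields a pointwise lower bound of the shape
$$\mu^x(B_\varepsilon(E)) \gtrsim \frac{\varepsilon}{\Phi_N(E)},$$
where $\Phi_N(E)$ denotes an appropriate combination of windowed $\ell^2$-norms of the two linearly independent solutions of the eigenequation at energy $E$ over $[-N,N]$. It therefore suffices to establish
$$\Phi_N(E) \lesssim N^{1-d+o(1)} = N^{L/(2\beta(\alpha)-L)+o(1)}$$
uniformly for $E \in A$ and for all sufficiently large $N$. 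Once this is in hand, inserting the pointwise lower bound into $S_q$ and integrating against $d\mu^x$ over $A$ gives $S_q(\varepsilon) \gtrsim \mu^x(A)\, \varepsilon^{(q-1)(d+o(1))}$, which is the required estimate.

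The main obstacle is the uniform-in-$N$ control of $\Phi_N$. The decay estimates of \cite{JK} are naturally sharpest at the resonant scales $N \asymp q_k$ (the denominators of the convergents of $\alpha$), where a careful balance between the Lyapunov rate $L(E)$ and the resonance rate $\beta(\alpha)$ produces precisely the exponent $1-d$. For intermediate scales $q_k < N < q_{k+1}$ one must interpolate, using concatenation together with the \emph{a priori} Lyapunov bound $\Phi_N(E) \lesssim e^{2LN+o(N)}$ to bridge successive resonant scales. The technical hypothesis $q \ge 3/2$ mirrors the one in \cite{JLT} and presumably emerges from a H\"older interpolation in this bridging step: the dual exponent $\le 3$ is just enough to absorb the loss coming from scales or energies on which the interpolated bound is only suboptimal, which is ultimately why the argument tolerates only $q \ge 3/2$ and not all $q > 1$.
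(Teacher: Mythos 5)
Your overall strategy --- routing the R\'enyi bound through solution-norm estimates and subordinacy theory --- points in the right direction, but the reduction you set up has two concrete flaws. First, the arithmetic does not close: if $\mu^x(B_\varepsilon(E))\gtrsim\varepsilon/\Phi_N(E)$ with $N\asymp 1/\varepsilon$ and $\Phi_N\lesssim N^{1-d+o(1)}$, then $\mu^x(B_\varepsilon(E))\gtrsim\varepsilon^{2-d+o(1)}$, and inserting this into $S_q$ yields only $D^+_{\mu^x|_A}(q)\le 2-d$, which exceeds $1$ whenever $L>0$ and is therefore vacuous. To get $D^+\le d$ your scheme would need $\Phi_N\lesssim N^{-(1-d)}\to 0$, impossible for a windowed $\ell^2$-norm of a normalized solution. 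The correct length scale is not $N\asymp1/\varepsilon$ but the implicit Jitomirskaya--Last scale $L(\varepsilon)$ defined by $\omega_\pm(L(\varepsilon))=1/\varepsilon$ (Theorem \ref{Appendix_thm}); the exponent $\frac{L(E)}{2\beta-L(E)}$ arises from playing a \emph{lower} bound on the product $\omega_\pm(L)$ (Lemma \ref{prod_L_norms_growth_lemma}, which forces $L(\varepsilon)\ll1/\varepsilon$) against the universal Last--Simon \emph{upper} bound $b_\pm(L)\lesssim L^{1+\eta}$ coming from (\ref{eq_last_simon}) for the generalized eigenfunction --- not against an upper bound extracted from the \cite{JK} decay estimates, and not uniformly in $E\in A$ (both inputs exist only for $\mu^x$-a.e.\ $E$, which is all that is needed).

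Second, your reduction to a pointwise lower bound on $\mu^x(B_\varepsilon(E))$ valid for \emph{all} small $\varepsilon$ is not what subordinacy theory delivers, and this is not a technicality. What one obtains is $\im m_\theta^\pm(E+i\varepsilon)\ge\varepsilon^{-t}$ for every $t<\frac{L(E)}{2\beta-L(E)}$ (Lemma \ref{hl_borel_lower_bound}), hence the same for $\im M$ via Proposition \ref{line_borel_hl_borel_prop}. Since $\im M(E+i\varepsilon)\le C\sup_{\delta\ge\varepsilon}\mu^x\left(B_\delta(E)\right)/\delta$, this only guarantees $\mu^x(B_\delta(E))\gtrsim\delta^{1-t}$ at \emph{some} $E$-dependent scale $\delta\ge\varepsilon$, not at $\delta=\varepsilon$. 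Converting such scale-dependent concentration into a bound on $S_\mu(q,\varepsilon)$ is precisely the content of \cite[Theorem 1.7]{JLT}, quoted here as Proposition \ref{Renyi_dim_borel_transform_prop}, and that is where the restriction $q\ge\frac{3}{2}$ actually originates --- not from a H\"older interpolation between resonant scales as you speculate. The paper's proof is then short: establish the Borel-transform bound (\ref{M_bd_eq}) as in the proof of Theorem \ref{main_thm} and apply Proposition \ref{Renyi_dim_borel_transform_prop}, the exponent $1-t$ giving $\frac{2\beta-2L}{2\beta-L}$. (A minor point: Proposition \ref{prop_green_omega_estimate} is not the subordinacy estimate; it is the Green's-function input to the lower bound on $\omega_\pm$, while the subordinacy estimate is Theorem \ref{Appendix_thm}.)
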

\begin{remark}
	Theorem \ref{main_thm_intro_2} is the analogue of \cite[Theorem 1.2]{JLT} in the monotone case.
\end{remark}

The rest of the paper is structured as follows. In Section \ref{section_prelim} we present some preliminary definitions and results related to quasiperiodic Schr\"{o}dinger operators. In Section \ref{section_green_monotone} we give relevant results from \cite{JK} which are crucial to obtain the results of this work, and we formulate Theorem \ref{main_thm} which then implies Theorems \ref{main_thm_intro} and \ref{main_thm_intro_2}. In Section \ref{section_proof_main_thm} we prove Theorem \ref{main_thm}.

{\bf Acknowledgments} I would like to thank Svetlana Jitomirskaya for suggesting the problem and for useful discussions. I would also like to thank Ilya Kachkovskiy, Wencai Liu and Jake Fillman for their helpful comments.
\section{Preliminaries}\label{section_prelim}
\subsection{Continued fraction approximation and $\beta\left(\alpha\right)$}
Let $\alpha\in\mathbb{R}\setminus\mathbb{Q}$. It is well-known that $\alpha$ has a continued fraction approximation, namely an approximation of the form $\left(\frac{p_n}{q_n}\right)_{n=0}^{\infty}$ where all $q_n$'s are positive, and there exists a sequence $\left(a_n\right)_{n=0}^\infty$ such that
\begin{center}
	$\frac{p_n}{q_n}=a_0+\frac{1}{a_1+\frac{1}{a_2+\frac{1}{\cdots+\frac{1}{a_n}}}}$.
\end{center}
Let $\beta\left(\alpha\right)\coloneqq\underset{n\to\infty}{\limsup}\,\frac{\ln q_{n+1}}{q_n}$. While $\beta\left(\alpha\right)$ can possibly be infinite, by the nature of the results in this paper we assume throughout that $\beta\left(\alpha\right)<\infty$. The number $\beta\left(\alpha\right)$ is often refered to as the measure of irrationality of $\alpha$. Note that by the definition of $\limsup$, for every $\varepsilon>0$, for large enough $n$,
\begin{center}
	$q_{n+1}\leq e^{\left(\beta\left(\alpha\right)+\varepsilon\right)q_n}$.
\end{center}
\subsection{Packing and R\'{e}nyi dimension of sets and measures}\label{section_dims}
\subsubsection{The packing dimension}
Fix $\delta>0$. Given a Borel set $S\subseteq\mathbb{R}$, a $\delta$-packing of $B$ is a collection of disjoint closed balls $\left(B\left(x_i,r_i\right)\right)_{i\in\mathbb{N}}$ such that for every $i\in\mathbb{N}$, $x_i\in B$ and $r_i\leq\delta$. For any $\alpha\in\left[0,1\right]$, the $\alpha$-dimensional packing pre-measure is defined by
\begin{center}
	$p_0^\alpha\left(S\right)=\underset{\delta\to 0}{\lim}\underset{\delta-\text{packings}}{\sup}\sum\limits_{i=1}^{\infty}\left|r_i\right|^\alpha$
\end{center}
for any $S\subseteq\mathbb{R}$. Then, the packing measure $p^\alpha$ is defined by
\begin{center}
	$p^\alpha\left(S\right)=\inf\left\{\sum\limits_{j=1}^
	\infty p_0^\alpha\left(S_j\right):S\subseteq\underset{j}{\bigcup}S_j\right\},\,\,\,\,\,\,\,S\in\text{Borel}\left(\mathbb{R}\right)$
\end{center}
For any Borel set $S$, there exists a unique $\alpha\in\left[0,1\right]$ such that $h^\beta\left(S\right)=\infty$ for any $\beta<\alpha$ and $h^\beta\left(S\right)=0$ for any $\beta>\alpha$. This $\alpha$ is called the {\it packing dimension} of $S$ and is denoted $\dim_P\left(S\right)$.

Let $\mu$ be a Borel measure on $\mathbb{R}$. The upper and lower packing dimensions of $\mu$ are defined by
\begin{center}
	$\dim_P^+\left(\mu\right)=\inf\left\{\dim_P\left(S\right):\mu\left(\mathbb{R}\setminus S\right)=0\right\}$,\\
	$\dim_P^-\left(\mu\right)=\inf\left\{\dim_P\left(S\right):\mu\left(S\right)>0\right\}$.
\end{center}
We also define, for every $E\in\mathbb{R}$,
\begin{center}
	$\gamma_\mu^-\left(E\right)=\underset{\varepsilon\to 0_+}{\liminf}\,\frac{\log\left(\mu\left(E-\varepsilon,E+\varepsilon\right)\right)}{\log\varepsilon}$,\\
	$\gamma_\mu^+\left(E\right)=\underset{\varepsilon\to 0_+}{\limsup}\,\frac{\log\left(\mu\left(E-\varepsilon,E+\varepsilon\right)\right)}{\log\varepsilon}$.
\end{center}
Finally, we define for every $E\in\mathbb{R}$ and $\eta\in\left[0,1\right]$,
\begin{center}
	$\underline{D}_\mu^\eta\left(E\right)=\underset{\varepsilon\to0}{\liminf}\,\frac{\mu\left(E-\varepsilon,E+\varepsilon\right)}{\varepsilon^\eta}$.
\end{center}

We will use the following results which can be found in the Appendix of \cite{GSB} (see also \cite{Cut}).
\begin{lemma}\label{packing_derivative_infinity_lemma}
	For every $\eta>\gamma_\mu^+\left(E\right)$, $\underline{D}_\mu^\eta\left(E\right)=\infty$.
\end{lemma}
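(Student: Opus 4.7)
The plan is to convert the limsup hypothesis on $\gamma_\mu^+(E)$ into a pointwise lower bound on $\mu(E-\varepsilon,E+\varepsilon)$ that beats $\varepsilon^\eta$, and then compare. The only delicate point is the sign of $\log \varepsilon$ for small $\varepsilon$, which reverses inequalities when we multiply through; this is not a genuine obstacle, merely a bookkeeping matter.

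First I would pick an intermediate exponent $\eta'$ with $\gamma_\mu^+(E)<\eta'<\eta$. By the definition of $\gamma_\mu^+(E)$ as a $\limsup$, there exists $\varepsilon_0\in(0,1)$ such that for every $\varepsilon\in(0,\varepsilon_0)$,
\begin{equation*}
\frac{\log\mu(E-\varepsilon,E+\varepsilon)}{\log\varepsilon}<\eta'.
\end{equation*}
I would note in passing that the finiteness of $\gamma_\mu^+(E)$ (which is implicit, since $\eta>\gamma_\mu^+(E)$ and $\eta\in[0,1]$) forces $\mu(E-\varepsilon,E+\varepsilon)>0$ for all sufficiently small $\varepsilon$, so the logarithm and the ratio are well defined on $(0,\varepsilon_0)$ after possibly shrinking $\varepsilon_0$.

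Next, since $\varepsilon<1$ implies $\log\varepsilon<0$, multiplying the above inequality by $\log\varepsilon$ reverses its direction and gives
\begin{equation*}
\log\mu(E-\varepsilon,E+\varepsilon)>\eta'\log\varepsilon=\log\varepsilon^{\eta'},
\end{equation*}
whence $\mu(E-\varepsilon,E+\varepsilon)>\varepsilon^{\eta'}$ for every $\varepsilon\in(0,\varepsilon_0)$. Dividing by $\varepsilon^\eta$ yields
\begin{equation*}
\frac{\mu(E-\varepsilon,E+\varepsilon)}{\varepsilon^\eta}>\varepsilon^{\eta'-\eta}.
\end{equation*}
Because $\eta'-\eta<0$, the right-hand side tends to $+\infty$ as $\varepsilon\to 0_+$. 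Taking $\liminf$ on the left gives $\underline{D}_\mu^\eta(E)=\infty$, as required. The whole proof is a one-step transcription between the logarithmic and polynomial formulations of pointwise dimension; no technical machinery beyond the definitions is needed.
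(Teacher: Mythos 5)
Your proof is correct. The paper does not prove this lemma at all---it simply quotes it from the appendix of \cite{GSB} (see also \cite{Cut})---and your argument is exactly the standard definitional one behind that reference: pass to an intermediate exponent $\eta'$, use the $\limsup$ to get $\mu\left(E-\varepsilon,E+\varepsilon\right)>\varepsilon^{\eta'}$ for small $\varepsilon$ (with the sign of $\log\varepsilon$ handled correctly, and the positivity of the ball measures correctly noted), and conclude from $\varepsilon^{\eta'-\eta}\to\infty$.
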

\begin{prop}\label{prop_alpha_pack_supp}
	Let $S$ be a Borel set and let $\mu|_S\coloneqq\mu\left(\cdot\cap S\right)$. Suppose that for every $E\in S$, $\underline{D}_\mu^\eta\left(E\right)=\infty$. Then $\dim_P^+\left(\mu|_A\right)\leq\eta$.
\end{prop}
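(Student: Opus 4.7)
The plan is to prove the sharper statement $\dim_P(S)\le\eta$; the proposition then follows at once because $\mu|_S$ is supported on $S$ by construction, so by definition of the upper packing dimension of a measure, $\dim_P^+(\mu|_S)\le\dim_P(S)$. This is the packing-side incarnation of the classical mass distribution principle of Frostman--Billingsley, and the lower local density assumption is the right input for the packing version (in contrast to the Hausdorff version, which uses an upper local density).

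The first step is to stratify $S$ by a uniform quantitative version of the hypothesis. For each $M\in\mathbb{N}$ set
\[
S_M=\left\{E\in S\,:\,\mu\left(B\left(E,r\right)\right)\ge Mr^\eta\text{ for every }r\in\left(0,1/M\right]\right\}.
\]
Since $\underline{D}_\mu^\eta(E)=\infty$ for every $E\in S$, we have $S=\bigcup_M S_M$ with $(S_M)_M$ increasing; Borel measurability of each $S_M$ is routine via monotonicity of $r\mapsto\mu(B(E,r))$ (reducing the quantifier to rational $r$) and lower semicontinuity in $E$. Now fix $M$ and $\delta\in(0,1/M)$, and let $\{B(x_i,r_i)\}_{i\in\mathbb{N}}$ be any $\delta$-packing of $S_M$ in the sense of the paper. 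Since each $x_i\in S_M$ and $r_i\le\delta<1/M$, the defining inequality of $S_M$ applied at $x_i$ with radius $r_i$ yields $\mu(B(x_i,r_i))\ge Mr_i^\eta$, and disjointness of the balls gives
\[
M\sum_i r_i^\eta\le\sum_i\mu\left(B\left(x_i,r_i\right)\right)\le\mu\left(\mathbb{R}\right)<\infty.
\]
Taking the supremum over $\delta$-packings and then $\delta\to 0^+$ produces $p_0^\eta(S_M)\le\mu(\mathbb{R})/M$, and hence $p^\eta(S_M)\le p_0^\eta(S_M)\le\mu(\mathbb{R})/M$.

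The final step is to pass from $S_M$ to $S$. Since $p^\eta$ is a genuine Borel measure (the Method II construction used in the paper's definition produces a countably additive measure from the pre-measure $p_0^\eta$), continuity from below along the increasing Borel union $S=\bigcup_M S_M$ gives $p^\eta(S)=\lim_{M\to\infty} p^\eta(S_M)=0$, hence $\dim_P(S)\le\eta$, as required. The main subtlety I foresee is exactly this last passage: the pre-measure $p_0^\eta$ is not itself countably subadditive, so one cannot simply write $S\subseteq\bigcup_M S_M$ and sum $p_0^\eta(S_M)$, since the resulting series $\sum_M\mu(\mathbb{R})/M$ diverges. Working at the level of the Method II packing measure $p^\eta$, where countable additivity on Borel sets is available, is what makes the argument go through.
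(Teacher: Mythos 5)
The paper itself does not prove this proposition (it is quoted from the Appendix of \cite{GSB}, see also \cite{Cut}), so a self-contained argument is welcome, and your overall strategy is the right one: a packing-measure mass distribution principle showing $\dim_P\left(S\right)\leq\eta$, combined with the observation that $\mu|_S\left(\mathbb{R}\setminus S\right)=0$ gives $\dim_P^+\left(\mu|_S\right)\leq\dim_P\left(S\right)$. The packing estimate via disjointness is also fine. However, there is a genuine gap in the stratification step: with the coupled definition $S_M=\left\{E\in S:\mu\left(B\left(E,r\right)\right)\geq Mr^\eta\ \forall r\in\left(0,1/M\right]\right\}$, the identity $S=\bigcup_M S_M$ can fail. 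The hypothesis $\underline{D}_\mu^\eta\left(E\right)=\infty$ only says that $g\left(\rho\right)\coloneqq\inf_{0<r\leq\rho}\mu\left(B\left(E,r\right)\right)/r^\eta$ tends to infinity as $\rho\to0$, possibly very slowly. For instance, if $\mu\left(B\left(E,r\right)\right)=r^\eta\log\left(1/r\right)$ for small $r$ (an admissible local behavior of a finite measure, since this is increasing in $r$ near $0$), then $g\left(1/M\right)=\log M<M$ for every $M$, so $E$ belongs to no $S_M$ although $\underline{D}_\mu^\eta\left(E\right)=\infty$. For the same reason your claim that $\left(S_M\right)_M$ is increasing is unjustified: raising $M$ strengthens the constant while shrinking the range of radii, and neither inclusion follows; this matters because your continuity-from-below step uses exactly this monotonicity.

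The repair is standard: decouple the constant from the radius cutoff. Set $S_{M,k}=\left\{E\in S:\mu\left(B\left(E,r\right)\right)\geq Mr^\eta\ \forall r\in\left(0,1/k\right]\right\}$. For fixed $M$ these are Borel, increasing in $k$, and now $S=\bigcup_k S_{M,k}$ does follow from $\underline{D}_\mu^\eta\left(E\right)=\infty$; your packing bound (with $\delta<1/k$) gives $p_0^\eta\left(S_{M,k}\right)\leq\mu\left(\mathbb{R}\right)/M$, so continuity from below in $k$ and then $M\to\infty$ yields $p^\eta\left(S\right)=0$. Even simpler, you can bypass continuity from below and the measurability discussion entirely: it suffices to show $p^{\eta'}\left(S\right)=0$ for every $\eta'>\eta$. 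Taking $M=1$ and $T_k=S_{1,k}$, any $\delta$-packing of $T_k$ with $\delta<1/k$ satisfies $\sum_i r_i^{\eta'}\leq\delta^{\eta'-\eta}\sum_i r_i^\eta\leq\delta^{\eta'-\eta}\mu\left(\mathbb{R}\right)\to0$, so $p_0^{\eta'}\left(T_k\right)=0$ for every $k$, and directly from the definition of $p^{\eta'}$ as an infimum over countable covers, $p^{\eta'}\left(S\right)\leq\sum_k p_0^{\eta'}\left(T_k\right)=0$; hence $\dim_P\left(S\right)\leq\eta'$ for all $\eta'>\eta$, which gives the claim.
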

\subsubsection{The R\'{e}nyi dimension}
Let $\mu$ be a Borel measure on $\mathbb{R}$. For every $\varepsilon>0$, $q>0$, we denote
\begin{center}
	$S_\mu\left(q,\varepsilon\right)=\sum\limits_{j\in\mathbb{Z}}\left(\mu\left(\left[j\varepsilon,\left(j+1\right)\varepsilon\right)\right)\right)^q$.
\end{center}
The generalized R\'{e}nyi dimensions $D_\mu^\pm\left(q\right)$ are then defined by
\begin{center}
	$D_\mu^-\left(q\right)=\underset{\varepsilon\to0^+}{\liminf}\,\frac{\log S_\mu\left(q,\varepsilon\right)}{\left(q+1\right)\log\varepsilon}$,\\
	$D_\mu^+\left(q\right)=\underset{\varepsilon\to0^+}{\limsup}\,\frac{\log S_\mu\left(q,\varepsilon\right)}{\left(q+1\right)\log\varepsilon}$.
\end{center}
\subsection{Basic concepts in the theory of discrete Schr\"{o}dinger operators}

Let $H:\ell^2\left(\mathbb{Z}\right)\to\ell^2\left(\mathbb{Z}\right)$ be a bounded Schr\"{o}dinger operator,
\begin{equation}\label{op_eq}
	\left(H\psi\right)\left(n\right)=\psi\left(n-1\right)+\psi\left(n+1\right)+V\left(n\right)\psi\left(n\right)
\end{equation}
where $V:\mathbb{Z}\to\mathbb{R}$. Since $H$ is essentially self-adjoint \cite{Ber}, it corresponds with a projection-valued spectral measure $P$. For every $\varphi\in\ell^2\left(\mathbb{Z}\right)$, the spectral measure of $\varphi$ w.r.t.\ $H$ is a Borel measure given by
\begin{equation}\label{eq_spec_meas_def}
	\mu_\varphi\left(A\right)=\langle\varphi,P\left(A\right)\varphi\rangle,\,\,\,\,\,\,\,A\in\text{Borel}\left(\mathbb{R}\right).
\end{equation}
Since the set $\left\{\delta_0,\delta_1\right\}$ is cyclic for $H$, namely $\ell^2\left(\mathbb{Z}\right)=\overline{\vspan\left\{\left(H-z\right)^{-1}\delta_i:z\in\mathbb{C}\setminus\sigma\left(H\right),i=1,2\right\}}$, continuity properties of all spectral measures are determined by those of $\mu\coloneqq\mu_{\delta_0}+\mu_{\delta_1}$. Such properties are in turn strongly connected to its Borel transform, \mbox{$M\left(z\right)=\int_\mathbb{R}\frac{d\mu\left(x\right)}{x-z}$}, and to asymptotic properties of solutions to the eigenvalue equation. We now turn to describe the connections which are relevant for this work.
\subsubsection{The Borel transform}
Let $\mu$ be a finite real Borel measure on $\mathbb{R}$ and let $m:\mathbb{C}_+\to\mathbb{C}_+$ be its Borel transform,
\begin{center}
	$m\left(z\right)=\int_\mathbb{R}\frac{d\mu\left(x\right)}{x-z}$.
\end{center}
It is well-known that $m$ is analytic and that its boundary behavior is strongly connected to continuity properties of $\mu$ (for related results see, for example, \cite{DF1,DJLS,Sim1}). In particular, it is shown in \cite{DJLS} that for every \mbox{$\alpha\in\left(0,1\right)$} and every $E\in\mathbb{R}$, the quantities $\overline{D}_\mu^\alpha\left(E\right)$, $\underset{\varepsilon\to0}{\limsup}\,\varepsilon^{1-\alpha}\im m\left(E+i\varepsilon\right)$ and $\underset{\varepsilon\to0}{\limsup}\,\varepsilon^{1-\alpha}\left|m\left(E+i\varepsilon\right)\right|$ are either all $0$, all $\infty$, or all in $\left(0,\infty\right)$. The situation is more delicate for $\underline{D}_\mu^\alpha$. Nevertheless, we have the following results which are proved in \cite{JLT}.
\begin{prop}\emph{\cite[Theorem 1.3]{JLT}}\label{packing_dim_borel_transform_prop}
	Let $0\leq\eta<1$. Suppose that $\underset{\varepsilon\to 0}{\liminf}\,\varepsilon^{1-\eta}\im m\left(E+i\varepsilon\right)>0$. Then
	\begin{equation}
		\gamma_\mu^+\left(E\right)\leq\frac{\eta\left(2-\gamma_\mu^-\left(E\right)\right)}{2-\eta}.
	\end{equation}
	In particular, $\gamma_\mu^+\left(E\right)\leq\frac{2\eta}{2-\eta}$.
\end{prop}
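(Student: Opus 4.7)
The plan is to link $\im m(E+i\varepsilon)$ to the mass function $F(r) := \mu([E-r,E+r])$ via the Poisson representation, then optimize over an auxiliary scale to extract a pointwise lower bound on $F(r)$ from the hypothesis. Starting from
\[
\im m(E+i\varepsilon) = \int \frac{\varepsilon}{(x-E)^2+\varepsilon^2} \, d\mu(x),
\]
I would split the integral at $|x-E|=r$ for a free parameter $r\geq \varepsilon$. The inner piece is bounded by $F(r)/\varepsilon$ using the trivial peak value $1/\varepsilon$ of the Poisson kernel; the outer piece, decomposed dyadically into annuli $\{2^k r<|x-E|\leq 2^{k+1}r\}$ and combined with the universal upper bound $F(s)\leq s^{\gamma_\mu^-(E)-\delta}$ (valid for small $s$ and any $\delta>0$ by the definition of $\gamma_\mu^-$), yields a convergent geometric sum since $\gamma_\mu^-(E)\leq 1<2$. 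Altogether this gives
\[
\im m(E+i\varepsilon) \leq C\frac{F(r)}{\varepsilon} + C\,\varepsilon\, r^{\gamma_\mu^-(E)-\delta-2}.
\]

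Combining this upper bound with the hypothesis $\im m(E+i\varepsilon)\geq c\varepsilon^{\eta-1}$ and multiplying through by $\varepsilon$ gives
\[
F(r) + C'\varepsilon^2 r^{\gamma_\mu^-(E)-\delta-2} \geq c\,\varepsilon^\eta
\]
for all small enough $\varepsilon$ and all $r\geq\varepsilon$. The crucial move is now to pick $\varepsilon=\varepsilon(r)$ just below the threshold at which the second term is at most $c\varepsilon^\eta/2$; solving forces $\varepsilon\asymp r^{(2-\gamma_\mu^-(E)+\delta)/(2-\eta)}$, and for this choice one obtains a pointwise lower bound $F(r)\geq (c/2)\,\varepsilon(r)^\eta \gtrsim r^{\eta(2-\gamma_\mu^-(E)+\delta)/(2-\eta)}$, valid for every sufficiently small $r$. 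Taking logarithms and dividing by $\log r<0$ gives
\[
\limsup_{r\to 0^+}\frac{\log F(r)}{\log r} \leq \frac{\eta\left(2-\gamma_\mu^-(E)+\delta\right)}{2-\eta},
\]
and sending $\delta\to 0$ produces $\gamma_\mu^+(E)\leq \frac{\eta(2-\gamma_\mu^-(E))}{2-\eta}$. The \emph{in particular} bound follows immediately from $\gamma_\mu^-(E)\geq 0$.

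The main obstacle is spotting the correct coupling between $r$ and $\varepsilon$: a naive choice such as $r=\varepsilon$ only recovers the weaker inequality $\gamma_\mu^-(E)\leq\eta$ and misses the interaction between $\gamma^+$ and $\gamma^-$ entirely. One must also verify that the chosen $\varepsilon(r)$ satisfies $\varepsilon\leq r$, which requires $\gamma_\mu^-(E)\leq\eta$; the complementary regime $\gamma_\mu^-(E)>\eta$ has to be disposed of first (by taking $r=\varepsilon$ directly in the two-term upper bound) and in fact yields the stronger conclusion $\gamma_\mu^+(E)\leq\eta$, so the argument closes in every case.
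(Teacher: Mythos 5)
You should first note that this paper does not actually prove this proposition: it is quoted verbatim from \cite[Theorem 1.3]{JLT}, so there is no internal proof to compare against. Judged on its own, the core of your argument is the standard and correct route: split the Poisson integral at $\left|x-E\right|=r$, bound the inner piece by $F\left(r\right)/\varepsilon$ and the outer piece (dyadically, using $F\left(s\right)\leq s^{\gamma_\mu^-\left(E\right)-\delta}$ for small $s$, plus finiteness of $\mu$ for the non-small annuli, which you should say a word about) by $C\varepsilon r^{\gamma_\mu^-\left(E\right)-\delta-2}$, then couple $\varepsilon\asymp r^{\left(2-\gamma_\mu^-\left(E\right)+\delta\right)/\left(2-\eta\right)}$ against the hypothesis $\im m\left(E+i\varepsilon\right)\geq c\varepsilon^{\eta-1}$ and let $\delta\to0$. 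That part is sound and yields exactly the stated inequality, and the ``in particular'' clause follows from $\gamma_\mu^-\left(E\right)\geq0$.

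The genuine flaw is in your closing case analysis. In the regime $\gamma_\mu^-\left(E\right)>\eta$ one has $\frac{\eta\left(2-\gamma_\mu^-\left(E\right)\right)}{2-\eta}<\eta$, so the conclusion $\gamma_\mu^+\left(E\right)\leq\eta$ you extract there is \emph{weaker}, not stronger, than what the proposition demands; as written, the argument does not close in that case. Fortunately the repair is easy, in either of two ways. First, the constraint $\varepsilon\leq r$ you imposed is never needed: the bounds $\frac{\varepsilon}{\left(x-E\right)^2+\varepsilon^2}\leq\frac{1}{\varepsilon}$ on $\left\{\left|x-E\right|\leq r\right\}$ and $\frac{\varepsilon}{\left(x-E\right)^2+\varepsilon^2}\leq\frac{\varepsilon}{\left(x-E\right)^2}$ on $\left\{\left|x-E\right|>r\right\}$ hold unconditionally, so your optimized choice of $\varepsilon\left(r\right)$ is legitimate for every small $r$ and no case split is required. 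Second, the case is in fact vacuous: taking $r=\varepsilon$ in the two-term bound and using $F\left(\varepsilon\right)\leq\varepsilon^{\gamma_\mu^-\left(E\right)-\delta}$ gives $c\varepsilon^{\eta}\leq\left(1+C\right)\varepsilon^{\gamma_\mu^-\left(E\right)-\delta}$ for all small $\varepsilon$, which is impossible if $\gamma_\mu^-\left(E\right)-\delta>\eta$; hence the hypothesis already forces $\gamma_\mu^-\left(E\right)\leq\eta$. With either observation inserted, your proof is complete.
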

\begin{prop}\emph{\cite[Theorem 1.7]{JLT}}\label{Renyi_dim_borel_transform_prop}
	Let $\gamma\in\left(0,1\right)$. Assume that there exists a Borel set $A\subseteq\mathbb{R}$ with $\mu\left(A\right)>0$ and such that for every $x\in A$,
	\begin{center}
		$\underset{\varepsilon\to0^+}{\liminf}\,\varepsilon^\gamma\im m\left(E+i\varepsilon\right)>0$.
	\end{center}
	Then for every $q\geq\frac{3}{2}$, $D_\mu^+\left(q\right)\leq\gamma$.
\end{prop}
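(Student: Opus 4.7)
The plan. Since $\log\varepsilon<0$ for $\varepsilon\in(0,1)$, the conclusion $D_\mu^+(q)\le\gamma$ is equivalent to the polynomial lower bound
\[
S_\mu(q,\varepsilon)\ge\varepsilon^{(q+1)\gamma+o(1)}\qquad\text{as }\varepsilon\to 0^+,
\]
and the goal is to derive such a bound from the hypothesis. I would proceed in three stages.

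First, I would upgrade the pointwise liminf hypothesis to a uniform statement via an Egorov-type argument applied to the Borel functions $E\mapsto\varepsilon^\gamma\im m(E+i\varepsilon)$: one extracts a Borel subset $A'\subseteq A$ with $\mu(A')>0$ and constants $c,\varepsilon_0>0$ such that $\varepsilon^\gamma\im m(E+i\varepsilon)\ge c$ for all $E\in A'$ and all $\varepsilon\in(0,\varepsilon_0]$. Then I would convert this into local mass information about $\mu$ at scale $\varepsilon$ by means of the dyadic upper bound
\[
\im m(E+i\varepsilon)\le C\sum_{k\ge 0}\frac{\mu(B(E,2^k\varepsilon))}{4^k\,\varepsilon}
\]
and a pigeonhole over $k\in\{0,\ldots,O(\log(1/\varepsilon))\}$, beyond which the tail is negligible since $\mu$ is finite. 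This produces for each $E\in A'$ a concentration scale $\delta_E\ge\varepsilon$ at which $\mu(B(E,\delta_E))$ is suitably large; in the cleanest case $\delta_E\asymp\varepsilon$ it gives $\mu(B(E,\varepsilon))\gtrsim\varepsilon^{1-\gamma}/\log(1/\varepsilon)$.

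Second, I would transfer this estimate to the intervals $I_j$ entering the R\'enyi sum. For each $E\in A'$ the ball $B(E,\varepsilon)$ meets at most three consecutive intervals $I_{j-1},I_j,I_{j+1}$, so at least one of them inherits the mass bound. Letting $\mathcal J$ collect such heavy indices, the trivial bound $\sum_{j\in\mathcal J}\mu(I_j)\le\mu(\mathbb R)$ gives $|\mathcal J|\lesssim\varepsilon^{\gamma-1}\log(1/\varepsilon)$. H\"older's inequality applied to
\[
\mu(A')\le\sum_{j\in\mathcal J'}\mu(I_j)\le S_\mu(q,\varepsilon)^{1/q}\,|\mathcal J'|^{1-1/q},
\]
where $\mathcal J'\supseteq\mathcal J$ is the (comparable-size) set of indices whose interval meets $A'$, rearranges to a polynomial lower bound on $S_\mu(q,\varepsilon)$.

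The principal obstacle I anticipate is matching the precise exponent $(q+1)\gamma$: the H\"older step above produces the exponent $(1-\gamma)(q-1)$ on $S_\mu(q,\varepsilon)$, which matches the claim only at $\gamma=(q-1)/(2q)$ (that is $\gamma=1/6$ when $q=3/2$) and is strictly weaker for smaller $\gamma$. Closing this gap for all admissible $\gamma$, and in particular clarifying the role of the threshold $q\ge 3/2$, likely requires either a finer multi-scale accounting that exploits the family $\{\delta_E\}_{E\in A'}$ simultaneously rather than committing to a single scale, or an $L^q$-type identity of the form $S_\mu(q,\varepsilon)\sim\varepsilon^{q-1}\int\im m(E+i\varepsilon)^q\,dE$ combined with a Lipschitz-thickening of the level set $\{E:\im m(E+i\varepsilon)\ge c\varepsilon^{-\gamma}\}$ into a set of positive Lebesgue measure, thereby upgrading the pointwise hypothesis into an $L^q$ lower bound on $\im m$.
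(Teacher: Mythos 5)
This proposition is not proved in the paper at all: it is imported verbatim from \cite[Theorem 1.7]{JLT}, so there is no internal argument to compare against, and your proposal must stand on its own. As written it is not a proof, and you say so yourself: the Egorov--pigeonhole--H\"older scheme yields $S_\mu(q,\varepsilon)\gtrsim\varepsilon^{(1-\gamma)(q-1)}$ up to logarithmic factors, while the conclusion $D_\mu^+(q)\le\gamma$ requires $S_\mu(q,\varepsilon)\ge\varepsilon^{(q+1)\gamma+o(1)}$; since small $\varepsilon$ reverses exponent comparisons, your bound gives the statement only when $(1-\gamma)(q-1)\le(q+1)\gamma$, i.e.\ $\gamma\ge\frac{q-1}{2q}$. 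The proposition is asserted for every $\gamma\in(0,1)$ and every $q\ge\frac32$; for any fixed $\gamma<\frac12$ your threshold is violated for all large $q$, and even at $q=\frac32$ the range $\gamma<\frac16$ is left open --- which is precisely the regime relevant to this paper, since in Theorem \ref{main_thm_intro_2} one takes $\gamma=\frac{2\beta-2L}{2\beta-L}\to 0$ as $L\to\beta$, i.e.\ near the arithmetic transition. A second, earlier gap is the reduction to ``the cleanest case $\delta_E\asymp\varepsilon$'': the dyadic pigeonhole only produces an $E$-dependent scale $\delta_E\in[\varepsilon,1]$ with $\mu\left(B\left(E,\delta_E\right)\right)\gtrsim\delta_E^2\,\varepsilon^{-1-\gamma}/\log\left(1/\varepsilon\right)$, and when $\delta_E\gg\varepsilon$ that mass may be spread over many $\varepsilon$-intervals, which is exactly the scenario your counting argument does not handle.

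The repairs you gesture at do not close the gap without a genuinely new idea. The $L^q$ route $S_\mu(q,\varepsilon)\gtrsim\varepsilon^{q-1}\int\left(\im m\left(E+i\varepsilon\right)\right)^q dE$, combined with the fact that $\im m\left(E'+i\varepsilon\right)\ge\frac13\im m\left(E+i\varepsilon\right)$ for $\left|E'-E\right|\le\varepsilon$, only yields $S_\mu(q,\varepsilon)\gtrsim N(\varepsilon)\,\varepsilon^{q(1-\gamma)}$ with $N(\varepsilon)$ the number of $\varepsilon$-intervals meeting $A'$, and balancing this against the concentration bound $S_\mu(q,\varepsilon)\ge\left(\mu\left(A'\right)/N(\varepsilon)\right)^q$ still leaves a threshold $\gamma\ge q^2/\left(q^2+(q+1)^2\right)$. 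Likewise, the Young-inequality estimate $S_\mu(q,\varepsilon)\gtrsim\varepsilon^{q-1}\int\left(\im m\left(E+i\varepsilon\right)\right)^{q-1}d\mu\left(E\right)$ (the natural place where a restriction like $q\ge\frac32$ would enter, a point your argument never explains) reproduces exactly your exponent $(1-\gamma)(q-1)$. So any single-scale use of the hypothesis appears structurally insufficient, and the actual content of \cite[Theorem 1.7]{JLT} --- which must exploit the liminf hypothesis at all scales simultaneously --- is the missing ingredient: your proposal identifies the difficulty correctly but does not resolve it.
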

\begin{remark}
	Propositions \ref{packing_dim_borel_transform_prop} and \ref{Renyi_dim_borel_transform_prop} are actually special cases of Theorems 1.3 and 1.7 in \cite{JLT}, which are more general.
\end{remark}
\subsubsection{Half-line restrictions of Schr\"{o}dinger operators}
For every $A\subseteq\mathbb{Z}$, let $P_A$ denote the projection of $\ell^2\left(\mathbb{Z}\right)$ onto $\ell^2\left(A\right)$. Let
\begin{center}
	$H^\pm\coloneqq P_{\mathbb{Z}_\pm}HP_{\mathbb{Z}_\pm}$,
\end{center}
where $\mathbb{Z}_+=\mathbb{N}$ and $\mathbb{Z}_-=\mathbb{Z}\setminus\mathbb{N}$. Given $\theta\in\left[0,\pi\right)$, let
\begin{center}
	$H_\theta^+=\begin{cases}
		H^+-\tan\theta\langle\delta_1,\cdot\rangle\delta_1 & \theta\neq\frac{\pi}{2}\\
		H_{\frac{\pi}{2}}^+ & \theta=\frac{\pi}{2}
	\end{cases}$,\\
	$H_\theta^-=\begin{cases}
		H^--\cot\theta\langle\delta_0,\cdot\rangle\delta_0 & \theta\neq 0\\
		H_0^- & \theta=0
	\end{cases}$,
\end{center}
where $H_\frac{\pi}{2}^+$ is defined by shifting $V$ to the left and taking the corresponding $H^+$, and $H_0^-$ is defined by shifting $V$ to the right and taking the corresponding $H^-$. Note that for every $\theta\in\left[0,\pi\right)$, $\delta_0$ and $\delta_1$ are cyclic vectors for $H_\theta^-$ and $H_\theta^+$ respectively. We denote their spectral measures by $\mu_\theta^\pm$ and the corresponding Borel transforms by $m_\theta^\pm$. Throughout this work, we will also denote by $\mu_0,\mu_1$ the spectral measures of $\delta_0$ and $\delta_1$ w.r.t.\ $H$, and by $M_0,M_1$ their respective Borel transforms. Finally, we will denote $\mu=\mu_0+\mu_1$ and $M=M_0+M_1$. It is well-known (see, e.g.\ \cite[(2.5)-(2.7)]{JL2}) that for every $z\in\mathbb{C}_+$,
\begin{align*}\label{M_rep_using_hl_eq}
	M_0\left(z\right)=\frac{m_0^+\left(z\right)m_0^-\left(z\right)}{m_0^+\left(z\right)+m_0^-\left(z\right)},\\
	M_1\left(z\right)=\frac{-1}{m_0^+\left(z\right)+m_0^-\left(z\right)}
\end{align*}
which implies
\begin{center}
	$M\left(z\right)=M_0\left(z\right)+M_1\left(z\right)=\frac{m_0^+\left(z\right)m_0^-\left(z\right)-1}{m_0^+\left(z\right)+m_0^-\left(z\right)}$.
\end{center}
We will use the following result from \cite{JLT}.
\begin{prop}\emph{\cite[Lemma 9.1]{JLT}}\label{line_borel_hl_borel_prop}
	Let $E\in\mathbb{R}$ and let $t\in\left(0,1\right)$. Suppose that there exist $\theta\in\left[0,\pi\right)$ and $\varepsilon_0>0$ such that for $\varepsilon<\varepsilon_0$, $\im m_\theta^\pm\left(E+i\varepsilon\right)\geq e^{-t}$. Then there exists $\varepsilon_1>0$ such that for $\varepsilon<\varepsilon_1$,
	\begin{equation}
		\im M\left(E+i\varepsilon\right)\geq\varepsilon^{-t}.
	\end{equation}
\end{prop}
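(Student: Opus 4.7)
The plan is to begin from the algebraic identity
\[M(z) = \frac{m_0^+(z)\,m_0^-(z) - 1}{m_0^+(z) + m_0^-(z)}\]
established in the preceding subsection, and first to reduce the $\theta$-dependent hypothesis to a statement purely about $m_0^\pm$ via rank-one perturbation theory. For $\theta \in (0,\pi)\setminus\{\pi/2\}$ the change of boundary condition is implemented by a Möbius transformation of the form $m_\theta^\pm = (\cos\theta\cdot m_0^\pm - \sin\theta)/(\sin\theta\cdot m_0^\pm + \cos\theta)$, which gives $\im m_\theta^\pm = \im m_0^\pm / |\sin\theta\cdot m_0^\pm + \cos\theta|^2$; the degenerate cases $\theta \in \{0,\pi/2\}$ involve only a shift of the potential and are handled separately. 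In each case, the hypothesis that $\im m_\theta^\pm$ is large at $E+i\varepsilon$ for small $\varepsilon$ translates into quantitative control on $\im m_0^\pm$ (from below) and on $|m_0^\pm|$ for small $\varepsilon$.

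Next I split $\im M = \im M_0 + \im M_1$ using $M_1 = -1/(m_0^+ + m_0^-)$ and $M_0 = m_0^+ m_0^-/(m_0^+ + m_0^-)$. Both are Herglotz, so each summand is nonnegative, and a direct computation (clearing denominators and separating real and imaginary parts) yields
\[\im M_1 = \frac{\im m_0^+ + \im m_0^-}{|m_0^+ + m_0^-|^2}, \qquad \im M_0 = \frac{|m_0^-|^2\,\im m_0^+ + |m_0^+|^2\,\im m_0^-}{|m_0^+ + m_0^-|^2}.\]
These two expressions are complementary: $\im M_1$ is useful when $|m_0^\pm|$ is of moderate size, while $\im M_0$ is useful when $|m_0^\pm|$ is large, since its numerator grows quadratically in $|m_0^\pm|$.

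The main obstacle is that for a general Herglotz function $|m|$ can be much larger than $\im m$, so the common denominator $|m_0^+ + m_0^-|^2$ is not directly controlled from the imaginary parts alone. The strategy to overcome this is to split into regimes based on the size of $|m_0^\pm(E+i\varepsilon)|$ relative to $\im m_0^\pm(E+i\varepsilon)$: when $|m_0^\pm| \lesssim \im m_0^\pm$, the estimate on $\im M_1$ combined with the large lower bound on $\im m_0^\pm$ inherited from the hypothesis yields a matching lower bound on $\im M$; when $|m_0^\pm|$ is much larger than $\im m_0^\pm$, the quadratic numerator in $\im M_0$ more than compensates for the denominator and again gives the required bound. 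The $\theta$-dependent constants arising from the Möbius reduction are benign and can be absorbed into the choice of $\varepsilon_1$. The delicate part is verifying that these two regimes actually cover all cases uniformly in $\varepsilon$, and that the constants match up so as to produce the stated lower bound on $\im M(E+i\varepsilon)$ for every sufficiently small $\varepsilon$.
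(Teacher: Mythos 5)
The paper does not prove this proposition: it is quoted from \cite[Lemma 9.1]{JLT} (the $e^{-t}$ in the hypothesis is a typo for $\varepsilon^{-t}$), so your proposal can only be compared with the natural proof of that lemma, and there it has a genuine gap at the ``translation'' step. From $\im m_\theta^\pm=\im m_0^\pm/\left|\sin\theta\, m_0^\pm+\cos\theta\right|^2$, largeness of $\im m_\theta^\pm\left(E+i\varepsilon\right)$ does \emph{not} yield a lower bound on $\im m_0^\pm$ together with control of $\left|m_0^\pm\right|$: it can come entirely from the denominator being small, i.e.\ from $m_0^\pm\left(E+i\varepsilon\right)$ approaching a real point of modulus $\left|\cot\theta\right|$ while $\im m_0^\pm\left(E+i\varepsilon\right)\to0$ (as fast as $\varepsilon$ if $E$ lies outside the Dirichlet half-line spectra). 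This is precisely the regime the proposition is used in, since $\theta$ is chosen so that $u_\theta$ is the generalized eigenfunction and $E$ is resonant for the $\theta$ boundary condition, not for the Dirichlet one. In that regime both branches of your dichotomy fail: the numerator of $\im M_1=\frac{\im m_0^++\im m_0^-}{\left|m_0^++m_0^-\right|^2}$ vanishes as $\varepsilon\to0$, and the ``quadratic'' numerator of $\im M_0$ is only of size $\cot^2\theta\left(\im m_0^++\im m_0^-\right)$, so neither is forced to be of order $\varepsilon^{-t}$ unless one also shows that the common denominator $\left|m_0^++m_0^-\right|^2$ is small, of order $\varepsilon^{t}\left(\im m_0^++\im m_0^-\right)$. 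Nothing in your outline produces that smallness.

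That smallness is the actual content of the lemma, and it rests on a sign structure your reduction does not see: in the conventions for which the quoted formula for $M$ holds (\cite{JL2}), the two half-line $m$-functions for the \emph{same} $\theta$ are rotations of $m_0^+$ and $m_0^-$ in \emph{opposite} directions, so the hypothesis pins $m_0^+$ and $m_0^-$ near two real values that are negatives of each other and the real parts cancel in $m_0^++m_0^-$; equivalently, an energy resonant for both $H_\theta^+$ and $H_\theta^-$ glues to a whole-line eigenfunction, which is why $\im M$ must be large. With the same M\"obius transformation applied on both half-lines, as in your outline, the two critical real values coincide, there is no cancellation, and the conclusion is false for such data --- so the convention issue you call benign is the heart of the matter. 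The efficient proof avoids $m_0^\pm$ altogether: the representation is $\theta$-independent, $M=\frac{m_\theta^+m_\theta^--1}{m_\theta^++m_\theta^-}$ for every $\theta$ (invariance of $u^-\left(0\right)u^+\left(0\right)+u^-\left(1\right)u^+\left(1\right)$ and of the Wronskian under rotation of the orthonormal boundary data of $u_\theta,u_{\widetilde\theta}$), and then the elementary inequality $\im\frac{ab-1}{a+b}=\frac{\left(1+\left|b\right|^2\right)\im a+\left(1+\left|a\right|^2\right)\im b}{\left|a+b\right|^2}\geq\frac{1}{2}\min\left(\im a,\im b\right)$ for $a,b\in\mathbb{C}_+$, applied with $a=m_\theta^+$, $b=m_\theta^-$, finishes the proof, the factor $\frac{1}{2}$ being absorbed by shrinking $\varepsilon_1$ (or by an arbitrarily small loss in $t$).
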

\subsubsection{Asymptotic properties of generalized eigenfunctions}
Recall that $H$ is a Schr\"{o}dinger operator given by (\ref{op_eq}). In this subsection, we discuss some properties of solutions to the eigenvalue equation. Namely, given $E\in\mathbb{R}$, we are concerned with fucntions $u:\mathbb{Z}\to\mathbb{R}$ which satisfy
\begin{equation}\label{ev_eq_line_op}
	u\left(n+1\right)+u\left(n-1\right)+V\left(n\right)u\left(n\right)=Eu\left(n\right)
\end{equation}
for all $n\in\mathbb{Z}$. It is well-known (see, e.g.\ \cite{Ber}) that for $\mu$-almost every $E\in\mathbb{R}$ there exists a solution $u_E$ to (\ref{ev_eq_line_op}) which satisfies
\begin{equation}\label{gen_ev_estimate_eq}
	\left|\psi_E\left(n\right)\right|\leq C_E\left(1+\left|n\right|\right)
\end{equation}
for every $n\in\mathbb{Z}$. We call a solution $\phi$ which satisfies (\ref{gen_ev_estimate_eq}) and in addition satisfies
\begin{equation}\label{eq_norm_gen_ef}
	\left|\phi\left(0\right)\right|^2+\left|\phi\left(1\right)\right|^2=1
\end{equation}
a \textit{generalized eigenfunction}.

A connection between "truncated" $\ell^2$-norms and singularity was also obtained. More precisely, given $L>0$ and $u:\mathbb{Z}\to\mathbb{C}$, let
\begin{center}
	$\|u\|_L^+\coloneqq\left(\sum\limits_{n=1}^{\floor{L}}\left|u\left(n\right)\right|^2+\left(L-\floor{L}\right)\left|u\left(\floor{L}+1\right)\right|^2\right)^{\frac{1}{2}}$,\\
	$\|u\|_L^-\coloneqq\left(\sum\limits_{n=0}^{\floor{L}}\left|u\left(-n\right)\right|^2+\left(L-\floor{L}\right)\left|u\left(-\floor{L}-1\right)\right|^2\right)^{\frac{1}{2}}$.
\end{center}
It was proved in \cite{LS} that for $\mu$-almost every $E\in\mathbb{R}$ there exist $C\left(E\right)>0$ and solution $u$ to (\ref{ev_eq_line_op}) which satisfies
\begin{equation}\label{eq_last_simon}
	\|u\|_L^-+\|u\|_L^+\leq C\left(E\right)L^{\frac{1}{2}}\ln L.
\end{equation}
For every $\theta\in\left[0,\pi\right)$, there exists a unique solution $u_\theta$ to (\ref{ev_eq_line_op}) which satisfies
\begin{equation}\label{eq_bcon}
	u_\theta\left(0\right)=-\sin\theta,\,\,\, u_\theta\left(1\right)=\cos\theta.
\end{equation}
It is well-known (see, e.g.\ \cite{JL1}) that given $\theta\in\left[0,\pi\right)$, if we denote $\widetilde{\theta}=\theta+\frac{\pi}{2}\mod\pi$, then the Wronskian is constant, namely for every $n\in\mathbb{Z}$,
\begin{equation}\label{eq_const_wronskian}
	u_\theta\left(n\right)u_{\widetilde{\theta}}\left(n+1\right)-u_\theta\left(n+1\right)u_{\widetilde{\theta}}\left(n\right)=1.
\end{equation}
Subordinacy \cite{GP,KP} and power-law subordinacy \cite{DKL,JL1,JL2} relate asymptotic properties of the $L$-norms of solutions to (\ref{ev_eq_line_op}) to the boundary behavior of the Borel transform of $\mu$. For every $L>0$, let
\begin{center}
	$\omega_\pm\left(L\right)\coloneqq\underset{\theta\in\left[0,\pi\right)}{\max}\|u_\theta\|_L^\pm\cdot\underset{\theta\in\left[0,\pi\right)}{\min}\|u_\theta\|_L^\pm$.
\end{center}
It is not hard to see that for every $\varepsilon>0$ there exist unique $L_\pm\left(\varepsilon\right)$ such that $\omega_\pm\left(L_\pm\left(\varepsilon\right)\right)=\frac{1}{\varepsilon}$. We will use the following theorem, which appears in different forms in \cite{DT,KKL}. See for example \mbox{\cite[Appendix A]{KKL}}.
\begin{theorem}\label{Appendix_thm}
	Fix $\theta\in\left[0,\pi\right)$ and let $\widetilde{\theta}=\theta+\frac{\pi}{2}\mod\pi$. Given $L>0$, let
	\begin{center}
		$b_\pm\left(L\right)\coloneqq\left(\|u_\theta\|_L^\pm\right)^2$.
	\end{center}
	Then there exists a constant $C$, independent of $\theta$, such that
	\begin{center}
		$\im m_\theta^\pm\left(E+i\varepsilon\right)\geq\frac{1}{C}\frac{1}{\varepsilon}\frac{1}{b_\pm\left(L\left(\varepsilon\right)\right)}$.
	\end{center}
\end{theorem}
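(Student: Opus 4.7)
The statement is a one-sided (half-line) version of the Jitomirskaya--Last bound on $m$-functions. My plan is to derive it from the standard JL machinery restricted to a single half-line.

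First, I would start from the Herglotz identity
\begin{equation*}
\im m_\theta^+(E+i\varepsilon) = \varepsilon \bigl\|(H_\theta^+ - (E+i\varepsilon))^{-1}\delta_1\bigr\|_{\ell^2(\mathbb{N})}^2,
\end{equation*}
which reduces the task to lower bounding the $\ell^2$-norm of the first column of the half-line Green's function. On a half-line this kernel factorizes: for $n \geq 1$,
\begin{equation*}
G_\theta^+(n,1;z) = \frac{1}{W(z)}\,u_\theta(z;1)\,\phi^{L^2}(z;n),
\end{equation*}
where $\phi^{L^2}(z,\cdot)$ is the Weyl $\ell^2$ solution of $(H-z)\phi=0$ on $\mathbb{N}$ and $W(z)$ is its Wronskian with $u_\theta$. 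The goal becomes lower bounding $\|\phi^{L^2}\|_{\ell^2(\mathbb{N})}$.

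Next, I would bring in the real-energy solutions $u_\theta(E,\cdot)$ and $u_{\widetilde\theta}(E,\cdot)$ of (\ref{ev_eq_line_op}). On the window $[0,L_+(\varepsilon)]$, where $L_+(\varepsilon)$ is the JL scale defined by $\|u_\theta\|_{L_+(\varepsilon)}^+\|u_{\widetilde\theta}\|_{L_+(\varepsilon)}^+ = 1/(2\varepsilon)$, I would write $\phi^{L^2}(z,n) = A(z)\,u_\theta(E,n) + B(z)\,u_{\widetilde\theta}(E,n) + r(z,n)$. A variation-of-constants / transfer-matrix argument controls $r$ pointwise by roughly $\varepsilon L \cdot \max(\|u_\theta\|_L^+,\|u_{\widetilde\theta}\|_L^+)^2$, which is subleading exactly because of the JL choice of $L_+(\varepsilon)$. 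The constant-Wronskian identity (\ref{eq_const_wronskian}) then pins down $|A|,|B|$ up to a $\theta$-independent factor, and combining with the constraint that $\phi^{L^2}$ is $\ell^2$ at $+\infty$ (so the component that grows along the real energy solutions must have a small coefficient) yields
\begin{equation*}
\bigl\|\phi^{L^2}\bigr\|_{\ell^2(\mathbb{N})}^2 \;\gtrsim\; \frac{1}{\varepsilon^2\, b_+(L_+(\varepsilon))}.
\end{equation*}
Multiplying by $\varepsilon$ and absorbing the bounded factors $u_\theta(z;1)$ and $W(z)^{-1}$ into the constant $C$ produces the stated inequality. The minus-side statement follows by reflecting $n\mapsto -n$ and repeating the argument verbatim.

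I expect the main obstacle to be the $\theta$-uniform comparison in the second step: one has to show that on $[0,L_+(\varepsilon)]$ the complex-energy Weyl solution $\phi^{L^2}$ is uniformly well approximated by a combination of $u_\theta(E,\cdot)$ and $u_{\widetilde\theta}(E,\cdot)$, with constants depending only on the JL normalization and not on $\theta$. The Wronskian identity (\ref{eq_const_wronskian}) is crucial here because it keeps the two basis solutions simultaneously normalized independently of $\theta$; the potential failure modes at the extremes $\theta \to 0,\pi$, where $u_\theta$ degenerates at $n=0$ or $n=1$, are handled by passing to the complementary boundary datum $\widetilde\theta$ in the Green's function factorization.
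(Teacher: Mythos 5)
You should first note that the paper does not actually prove Theorem \ref{Appendix_thm}: it is imported verbatim from the literature (\cite{DT}, \cite[Appendix A]{KKL}), so the only question is whether your reconstruction is viable — and it has a genuine gap at the decisive step, namely the choice of scale. In the theorem, $L_\pm\left(\varepsilon\right)$ is defined through $\omega_\pm\left(L_\pm\left(\varepsilon\right)\right)=\frac{1}{\varepsilon}$, where $\omega_\pm$ is the product of the \emph{maximum and minimum over all boundary conditions} $\theta'$ of $\|u_{\theta'}\|_L^\pm$; you instead work at the fixed-$\theta$ Jitomirskaya--Last scale defined by $\|u_\theta\|_L^+\|u_{\widetilde{\theta}}\|_L^+=\frac{1}{2\varepsilon}$. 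These two scales are not uniformly comparable: whenever a subordinate solution exists, $\min_{\theta'}\|u_{\theta'}\|_L$ is much smaller than $\min\left\{\|u_\theta\|_L,\|u_{\widetilde{\theta}}\|_L\right\}$ for generic $\theta$, so your $L_+\left(\varepsilon\right)$ is strictly smaller than the theorem's, and your claimed conclusion is strictly stronger than the theorem. In fact it is false: since $\frac{1}{\varepsilon^2 b_+\left(L_+\left(\varepsilon\right)\right)}=4\left(\|u_{\widetilde{\theta}}\|^+_{L_+\left(\varepsilon\right)}\right)^2$ at your scale, your inequality $\|\phi^{L^2}\|^2_{\ell^2\left(\mathbb{N}\right)}\gtrsim\frac{1}{\varepsilon^2 b_+\left(L_+\left(\varepsilon\right)\right)}$ combined with $\im m_\theta^+=\varepsilon\|u_{\widetilde{\theta}}\left(z\right)+m u_\theta\left(z\right)\|^2_{\ell^2}$ and the JL two-sided bound $\left|m_\theta^+\left(E+i\varepsilon\right)\right|\asymp\|u_{\widetilde{\theta}}\|_{L_+\left(\varepsilon\right)}/\|u_\theta\|_{L_+\left(\varepsilon\right)}$ is equivalent to $\im m_\theta^+\left(E+i\varepsilon\right)\gtrsim\left|m_\theta^+\left(E+i\varepsilon\right)\right|$. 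That fails, e.g., in a localized regime at an eigenvalue $E$ of $H_{\theta^*}^+$ with $\theta\neq\theta^*$: there $m_\theta^+\left(E+i0\right)$ is a finite nonzero real number while $\im m_\theta^+\left(E+i\varepsilon\right)\asymp\varepsilon$. So the intermediate estimate you aim for cannot hold; only the weaker statement at the $\omega_+$-scale — which is exactly what the theorem asserts — is true.

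The mechanism you propose for the lower bound is also not right as described. With the standard normalization $\phi^{L^2}=u_{\widetilde{\theta}}\left(z\right)+m\left(z\right)u_\theta\left(z\right)$ one has $W\left(u_\theta\left(z\right),\phi^{L^2}\right)=1$ automatically, and the coefficient $A$ of $u_\theta$ is $m\left(z\right)$ itself, so it is not ``pinned down'' by (\ref{eq_const_wronskian}); moreover the qualitative constraint that $\phi^{L^2}\in\ell^2$ pushes it toward the subordinate solution, i.e.\ toward the boundary condition \emph{minimizing} $\|u_{\theta'}\|_L$, which is precisely why the correct bound is governed by $\omega_+$ (max times min over $\theta'$) and not by $\|u_{\widetilde{\theta}}\|_L$. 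The actual argument (\cite[Appendix A]{KKL}, \cite{DT}) goes through the quantitative JL/Weyl-disk estimates at that scale, with case analysis according to the size of $\left|m\right|$, rather than through qualitative subordinacy. Two smaller points: the prefactor $u_\theta\left(z;1\right)=\cos\theta$ vanishes at $\theta=\frac{\pi}{2}$ and $W$ depends on how $\phi^{L^2}$ is normalized, so neither can simply be absorbed into a $\theta$-independent $C$; the clean starting identity is $\im m_\theta^+\left(E+i\varepsilon\right)=\varepsilon\|u_{\widetilde{\theta}}\left(z\right)+m\left(z\right)u_\theta\left(z\right)\|^2_{\ell^2\left(\mathbb{N}\right)}$, which avoids both factors.
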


%We will also need
%\begin{prop}\label{line_borel_hl_borel_prop}
%	There exists $C>0$ such that for every $z\in\mathbb{C}_+$ and for every $\theta\in\mathbb{R}$,
%	\begin{equation}
%		\im M\left(z\right)\geq\min\left\{\im m_\theta^-\left(z\right),\im m_\theta^+\left(z\right)\right\}.
%	\end{equation}
%\end{prop}
%\begin{proof}
%	content...
%\end{proof}
\subsubsection{The Lyapunov exponent}
Fix $E\in\mathbb{R}$. For every $n\in\mathbb{Z}$, let
\begin{center}
	$T_n\left(E\right)=\left(\begin{matrix}
		E-V\left(n\right) & -1\\
		1 & 0
	\end{matrix}\right)$.
\end{center}
It is well known (see, e.g.\ \cite[I.1.10]{DF2}) that if $u$ is a solution to (\ref{ev_eq_line_op}), then for every $n\in\mathbb{Z}$,
\begin{center}
	$\left(\begin{matrix}
		u\left(n+1\right)\\
		u\left(n\right)
	\end{matrix}\right)=T_n\left(E\right)\left(\begin{matrix}
	u\left(n\right)\\u\left(n-1\right)
\end{matrix}\right)$.
\end{center}
Given $n\in\mathbb{Z}$, the $n$-step transfer matrix $\Phi_n\left(E\right)$ is then given by
\begin{center}
	$\Phi_n\left(E\right)=\begin{cases}
		T_n\left(E\right)\cdot\cdots\cdot T_1\left(E\right) & n\geq 1\\
		\text{Id} & n=0\\
		T_{n+1}^{-1}\cdot\cdots\cdot T_{0}^{-1} & n\leq -1
	\end{cases}.$
\end{center}
It is not hard to see that for every $n\in\mathbb{Z}$,
\begin{center}
	$\left(\begin{matrix}
		u\left(n+1\right)\\u\left(n\right)
	\end{matrix}\right)=\Phi_n\left(E\right)\left(\begin{matrix}
	u\left(1\right)\\u\left(0\right)
\end{matrix}\right)$.
\end{center}
Suppose that $V$ is a quasiperiodic monotone potential and fix $\alpha\in\left[0,1\right)\setminus\mathbb{Q}$. The transfer matrices depend on the choice of $x\in\left[0,1\right]$ and so we write $\Phi_n\left(x,E\right)$ to denote the $n$-step transfer matrix associated with $E$ and $H\left(x\right)$. The \textit{Lyapunov exponent} is then given by
\begin{center}
	$L\left(E\right)\coloneqq\underset{\left|n\right|\to\infty}{\lim}\frac{1}{n}\int_{\left[0,1\right)}\ln\|\Phi_n\left(x,E\right)\|dx$.
\end{center}
The Lyapunov exponent is well-defined and finite \cite{Kach}.
\section{Green's function estimate for monotone potentials}\label{section_green_monotone}
The goal of this section is to formally introduce the notion of quasiperiodic operators with monotone potentials, formulate our main result, and present results of \cite{JK} along with some relevant corollaries.
\subsection{The setting and main results}
Let $f:\mathbb{R}\to\left[-\infty,\infty\right)$ be a $1$-periodic function which satisfies
\begin{equation}\label{eq_gamma_monotone}
	f\left(y\right)-f\left(x\right)\geq\gamma\left(y-x\right),\,\,\,\,\,0\leq x<y<1
\end{equation}
for $\gamma>0$. Such $f$ is called \textit{$\gamma$-monotone}. Throughout, we will assume that
\begin{center}
	$\int_0^1\log\left(1+\left|f\left(x\right)\right|\right)dx<\infty$.
\end{center}
Given $\alpha,x\in\mathbb{R}$, the operator $H_{\alpha,x}:\ell^2\left(\mathbb{Z}\right)\to\ell^2\left(\mathbb{Z}\right)$ is given by
\begin{center}
	$\left(H_{\alpha,x}\psi\right)\left(n\right)=\psi\left(n-1\right)+\psi\left(n+1\right)+f\left(x+n\alpha\right)\psi\left(n\right)$.
\end{center}

Let us now fix $\alpha$ and study continuity properties of the measure $\mu^x=\mu_{\delta_0}^x+\mu_{\delta_1}^x$, where $\mu_\varphi^x$ is the spectral measure of $\varphi$ w.r.t.\ $H\left(x\right)=H\left(\alpha,x\right)$. Given $E\in\mathbb{R}$ and $\eta\in\left[0,1\right]$, recall that the lower $\eta$-derivative of $\mu^x$ at $E$ is given by $\underline{D}_{\mu^x}^\eta\left(E\right)=\underset{\varepsilon\to0}{\liminf}\,\frac{\mu^x\left(E-\varepsilon,E+\varepsilon\right)}{\varepsilon^\eta}$. Our main result in this section is the following
\begin{theorem}\label{main_thm}
	Suppose $\alpha\in\mathbb{R}\setminus\mathbb{Q}$. For every $E\in\mathbb{R}$, denote $\Lambda\left(E\right)=\min\left\{L\left(E\right),\beta\left(\alpha\right)\right\}$. Then for every \mbox{$\eta>2\left(1-\frac{\Lambda\left(E\right)}{\beta\left(\alpha\right)}\right)$} and every $x\in\mathbb{R}$, for $\mu^x$-almost every $E\in\mathbb{R}$,
	\begin{equation}\label{packing_derivative_main_thm_eq}
		\underline{D}_{\mu^x}^\eta\left(E\right)=\infty.
	\end{equation}
\end{theorem}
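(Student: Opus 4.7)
My plan is to run the chain of reductions established in the preliminaries and then supply the missing solution-norm bound using the Green's function estimates of \cite{JK}. By Lemma \ref{packing_derivative_infinity_lemma}, it suffices to show that $\gamma_{\mu^x}^+(E) \leq 2(1-\Lambda(E)/\beta(\alpha))$ for $\mu^x$-a.e.\ $E$. Proposition \ref{packing_dim_borel_transform_prop} reduces this to producing, for a suitable $\eta'$ chosen so that $\tfrac{2\eta'}{2-\eta'}$ is slightly below $\eta$, a lower bound
\[
\liminf_{\varepsilon \to 0}\,\varepsilon^{1-\eta'}\im M(E+i\varepsilon) > 0.
\]
Proposition \ref{line_borel_hl_borel_prop} in turn reduces this to a lower bound of the form $\im m_\theta^\pm(E+i\varepsilon) \gtrsim \varepsilon^{-s}$ at some boundary phase $\theta$, and Theorem \ref{Appendix_thm} then further reduces everything to a polynomial upper bound on $b_\pm(L(\varepsilon)) = (\|u_\theta\|_{L(\varepsilon)}^\pm)^2$, where $L(\varepsilon)$ is the scale defined implicitly through $\omega_\pm$.

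The substantive content is the production, for $\mu^x$-a.e.\ $E$ and along an appropriate sequence of scales, of polynomial upper bounds on $\|u_\theta\|_L^\pm$ with the correct exponent. The input is the Green's function / transfer matrix estimates from \cite{JK} (to be recorded in Section \ref{section_green_monotone}), in tandem with the generalized eigenfunction polynomial bound (\ref{gen_ev_estimate_eq}) and the a.e.\ estimate (\ref{eq_last_simon}) of \cite{LS}. Heuristically, the \cite{JK} estimates provide intervals of length comparable to $q_n$ on which the Green's function decays at rate $L(E)$, while true resonances are confined to blocks of size $\sim q_{n+1} \lesssim e^{\beta(\alpha)q_n}$. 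Choosing the scale $L \sim q_{n+1}$ and decomposing the partial sum defining $\|u_\theta\|_L^\pm$ into contributions from the decaying and resonant blocks, the exponential decay on good intervals absorbs into a constant while the generalized eigenfunction bound on bad intervals contributes a factor whose power is governed by the ratio $L(E)/\beta(\alpha)$. The upshot should be an estimate of the form $\|u_\theta\|_L^\pm \leq L^{\,1 - L(E)/\beta(\alpha) + o(1)}$ when $L(E) < \beta(\alpha)$, and an effectively constant bound when $L(E) \geq \beta(\alpha)$, which is exactly what is needed to hit the target exponent in the statement.

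Inserting this bound into Theorem \ref{Appendix_thm} and then chasing back through Propositions \ref{line_borel_hl_borel_prop} and \ref{packing_dim_borel_transform_prop} gives the required bound on $\gamma_{\mu^x}^+(E)$, which via Lemma \ref{packing_derivative_infinity_lemma} yields (\ref{packing_derivative_main_thm_eq}). I expect the main obstacle to be the bookkeeping in the solution-norm step: one must align the scale $L$ simultaneously with $q_{n+1}$ and with $L_\pm(\varepsilon)$, verify that $\omega_\pm$ reaches $1/\varepsilon$ at exactly that scale, and control both the subordinate and non-subordinate solutions at once through resonance blocks, where the constant-Wronskian identity (\ref{eq_const_wronskian}) is the natural device to force a lower bound on the non-subordinate side from an upper bound on the subordinate side. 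The case $L(E) \geq \beta(\alpha)$ is cleaner, since there the \cite{JK} estimates give essentially uniform exponential decay of the Weyl solution, hence $\|u_\theta\|_L^\pm = O(1)$ for one $\theta$ and the argument collapses to produce $\underline{D}_{\mu^x}^\eta(E) = \infty$ for every $\eta > 0$, matching the $\Lambda(E)=\beta(\alpha)$ case of the statement.
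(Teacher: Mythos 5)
Your reduction scaffolding is exactly the paper's: Lemma \ref{packing_derivative_infinity_lemma} and Proposition \ref{packing_dim_borel_transform_prop} reduce the theorem to a lower bound $\im M(E+i\varepsilon)\geq\varepsilon^{-t}$ for $t$ up to $\frac{L(E)}{2\beta-L(E)}$, Proposition \ref{line_borel_hl_borel_prop} reduces that to the half-line $m_\theta^\pm$, and Theorem \ref{Appendix_thm} turns it into a statement about truncated norms at the scale $L(\varepsilon)$ defined by $\omega_\pm(L(\varepsilon))=1/\varepsilon$. The gap is in how you propose to carry out the quantitative step. You place all the weight on an upper bound $\|u_\theta\|_L^\pm\leq L^{1-L(E)/\beta+o(1)}$ for the generalized eigenfunction at \emph{all} large scales. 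Two problems. First, this bound is not available from the \cite{JK} estimates: the partial decay (Proposition \ref{prop_liouville_resonant_growth}, Corollary \ref{cor_growth_between_resonances}) controls $\phi$ only at resonant sites in the window $2q_n^2q_{n+1}^{t_1}<|k|<q_{n+1}^{t_2}$ and at nonresonant sites up to $q_{n+1}$; in the gap between $q_{n+1}^{t_2}$ and the next window one has nothing better than (\ref{gen_ev_estimate_eq}) or the Last--Simon bound (\ref{eq_last_simon}), i.e.\ exponent $\tfrac12$, whereas in the only non-vacuous regime $\beta/2<L(E)<\beta$ your claimed exponent $1-L(E)/\beta$ is strictly below $\tfrac12$. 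Since Theorem \ref{Appendix_thm} is applied at $L(\varepsilon)$ for every small $\varepsilon$, hence at essentially every large scale, you cannot restrict to good scales. Second, even granting your bound, it does not "hit the target exponent" by itself: the Wronskian device you invoke, applied inside the product $\min\cdot\max$, only yields the generic bound $\omega_\pm(L)\gtrsim L$ (the max is forced up exactly by the amount the min goes down, and the product collapses). Feeding $b_\pm(L)\leq L^{2(1-L/\beta)+o(1)}$ and $L(\varepsilon)\lesssim\varepsilon^{-1}$ into Theorem \ref{Appendix_thm} gives $\im m_\theta^+\gtrsim\varepsilon^{-(2L/\beta-1)}$, and $\frac{2L-\beta}{\beta}<\frac{L}{2\beta-L}$ strictly for $L<\beta$; chasing this through Proposition \ref{packing_dim_borel_transform_prop} gives only $\gamma_\mu^+(E)\leq\frac{2(\beta-L)}{L}$, which is strictly weaker than the required $2(1-L/\beta)$.

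The paper distributes the exponents the other way: it keeps the cheap a.e.\ bound $b_\pm(L)\lesssim L^{1+\eta}$ from (\ref{eq_last_simon}) on the subordinate side and puts all the work into a genuine power-law lower bound $\omega_\pm(L)\geq L^{1+\frac{1}{2}\frac{L(E)}{t_1\beta}-\varepsilon}$ (Lemma \ref{prod_L_norms_growth_lemma}), proved by a four-case analysis in the position of $L$ relative to $q_n,q_{n+1}$: two adjacent regular points force $\omega_\pm$ up exponentially via Proposition \ref{prop_green_omega_estimate}; in the Liouville case one uses the pointwise decay of $\phi$ at a pair of resonant sites (Proposition \ref{prop_liouville_resonant_growth}) and only then the Wronskian, so the growth of the dual solution is anchored to an explicit exponential, not to the size of the minimizing solution; and a Cauchy--Schwarz block argument handles small $q_n$. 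This lower bound on $\omega_\pm$, not an improved upper bound on the subordinate norm, is what produces $t$ arbitrarily close to $\frac{L(E)}{2\beta-L(E)}$ in Lemma \ref{hl_borel_lower_bound}. So your proposal needs to be repaired by replacing the claimed solution-norm upper bound with such an $\omega_\pm$ estimate; your remark on the case $L(E)\geq\beta(\alpha)$ is essentially fine, since there localization (or Lemma \ref{hl_borel_lower_bound} with $\frac{L}{2\beta-L}\geq1$) gives the statement for every $\eta>0$.
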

\begin{remark}
	\begin{enumerate}
		\item If $L\left(E\right)>\beta\left(\alpha\right)$, then $\mu$-almost every $E\in\sigma\left(H\left(x\right)\right)$ must be an atom of $\mu^x$ and so (\ref{packing_derivative_main_thm_eq}) holds for every $\eta>0$.
		\item Note that if $L\left(E\right)<\frac{\beta\left(\alpha\right)}{2}$, then (\ref{packing_derivative_main_thm_eq}) holds for every Borel measure since $\eta>1$.
	\end{enumerate}
\end{remark}

\begin{proof}[Proof of Theorem \ref{main_thm_intro}]
	This is a direct consequence of Theorem \ref{main_thm} and Proposition \ref{prop_alpha_pack_supp}.
\end{proof}

\begin{proof}[Proof of Theorem \ref{main_thm_intro_2}]
	This is a direct consequence of Theorem \ref{main_thm} and Proposition \ref{Renyi_dim_borel_transform_prop}.
\end{proof}
\subsection{Green's function and decay estimates of solutions to (\ref{ev_eq_line_op})}
\subsubsection{Green's function}
Let $H:\ell^2\left(\mathbb{Z}\right)\to\ell^2\left(\mathbb{Z}\right)$ be given by (\ref{op_eq}). Given $\left[n_1,n_2\right]=I\subseteq\mathbb{Z}$, let $H_I$ be the restriction of $H$ to the interval $I$, namely
\begin{center}
	$H_I=P_IHP_I$.
\end{center}
Green's function is then given by
\begin{center}
	$G_I\left(x,y\right)=P_I\left(H_I-E\right)^{-1}P_I\left(x,y\right)$, $x,y\in I$.
\end{center}
Note that $G_I\left(x,y\right)=G_I\left(y,x\right)$ for every $E\in\mathbb{R}$. It is not hard to see (see, e.g.\ \cite{DF1}) that given a solution $u$ to (\ref{ev_eq_line_op}), for every $n\in\mathbb{Z}$ and every interval $\left[n_1,n_2\right]=I$ containing $n$,
\begin{equation}\label{sol_exp_green_fn_eq}
	u\left(n\right)=-G_I\left(n_1,n\right)u\left(n_1-1\right)-G_I\left(n,n_2\right)u\left(n_2+1\right).
\end{equation}
Thus, estimates on Green's function lead to estimates on the growth rate of solutions.
\begin{definition}
	Let $t>0$ and $k\in\mathbb{N}$. A point $n\in\mathbb{Z}$ is called $\left(t,k\right)$-regular if there exists an interval $\left[n_1,n_2\right]=I\subseteq\mathbb{Z}$ such that $n\in I$, $x_2-x_1=k-1$, $n_2-n,n-n_1\geq\frac{k}{2}$ and
	\begin{center}
		$\left|G_I\left(n,n_i\right)\right|\leq e^{-t\left|n-n_i\right|}$.
	\end{center}
	If $n$ is $\left(t,k\right)$-regular, then we denote $I\left(n\right)=\left[n_1,n_2\right]$.
\end{definition}
Using iterative estimations of solutions using (\ref{sol_exp_green_fn_eq}), the following is obtained in \cite{JK}.
\begin{prop}\label{inner_point_estimate_prop}
	Let $n\in\mathbb{Z}$ and suppose that there exist intervals $\left[n_1,n_2\right]\subset\left[N_1,N_2\right]\subset\mathbb{Z}$, $t>0$ and $k\in\mathbb{N}$ such that
	\begin{enumerate}
		\item Every $n\in\left[n_1,n_2\right]$ is $\left(\psi,t_n,k_n\right)$-regular, with the corresponding interval contained in $\left[N_1,N_2\right]$.
		\item For every $n\in\left[n_1,n_2\right]$, $t_n\geq t$ and $k_n>k$.
	\end{enumerate}
	Denote
	\begin{center}
		$N_1'=\argmax{\left\{\left|\psi\left(N\right)\right|:N\in\left[N_1,n_1\right)\right\}}$,\\
		$N_2'=\argmax{\left\{\left|\psi\left(N\right)\right|:N\in\left(n_2,N_2\right]\right\}}$.
	\end{center}
	Then for every $n\in\left[n_1,n_2\right]$,
	\begin{equation}\label{inner_point_estimate_eq}
		\left|\psi\left(n\right)\right|\leq e^{-t\left(1-o\left(1\right)\right)\left|m-N_1\right|}\left|\psi\left(N_1'\right)\right|+e^{-t\left(1-o\left(1\right)\right)\left|m-N_2\right|}\left|\psi\left(N_2'\right)\right|.
	\end{equation}
\end{prop}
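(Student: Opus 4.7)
The plan is to iterate the Green's function identity (\ref{sol_exp_green_fn_eq}) along a binary tree of substitutions rooted at $n$. Whenever a site $m\in[n_1,n_2]$ is encountered, its regularity supplies an interval $I(m)=[a(m),b(m)]\subseteq[N_1,N_2]$ of length at least $k$, with $m$ at distance at least $k/2$ from either endpoint, and
\[
|\psi(m)| \leq e^{-t_m(m-a(m))}\,|\psi(a(m)-1)| + e^{-t_m(b(m)-m)}\,|\psi(b(m)+1)|.
\]
If a child $a(m)-1$ or $b(m)+1$ again lies in $[n_1,n_2]$, I apply (\ref{sol_exp_green_fn_eq}) there; otherwise, by hypothesis (i), the branch terminates in $[N_1,n_1)$ or $(n_2,N_2]$, where $|\psi|$ is bounded by $|\psi(N_1')|$ or $|\psi(N_2')|$ respectively. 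Since each step moves at least $k/2$ units while remaining in the compact window $[N_1,N_2]$, every branch terminates in finitely many steps.

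Fully expanding the recursion yields $|\psi(n)|\leq\sum_p W_p\,|\psi(p)|$, summed over the leaves $p$ of the tree, with $W_p$ the product of Green's-function weights along the unique root-to-leaf path. Grouping the leaves by the side of termination reduces the problem to bounding $\sum_{p\text{ on left}}W_p$ and $\sum_{p\text{ on right}}W_p$. Along a single path $m_0=n,m_1,\ldots,m_J=p$, each step contributes a factor $e^{-t_{m_j}(|m_j-m_{j+1}|-1)}$, so the product over the path is at most $\exp(-t\ell+tJ)$, where $\ell=\sum_j|m_j-m_{j+1}|$ is the total unsigned path length. By the triangle inequality $\ell\geq|n-p|$, and since each step has length at least $k/2$, one has $J\leq 2\ell/k$, yielding the per-path bound $\exp(-t|n-p|(1-2/k))$.

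The main obstacle is controlling the branching, which a priori produces up to $2^J$ leaves at depth $J$. This is handled by noting that each step costs at most an extra decay factor $e^{-tk/2}$, so once $k$ is large enough that $2e^{-tk/2}<1$ the geometric series $\sum_p W_p$ converges and is dominated by the leaves at minimal depth on each side. Absorbing the per-step offset, the branching correction, and the uniformity provided by hypothesis (ii) into a single $(1-o(1))$ factor as $k\to\infty$ produces (\ref{inner_point_estimate_eq}), with the two terms corresponding to the two sides of termination. Hypothesis (ii) is essential precisely to make this $o(1)$ uniform across the entire tree.
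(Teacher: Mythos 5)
Your overall route---iterating (\ref{sol_exp_green_fn_eq}) along a tree of substitutions and beating the factor $2$ of branching with the per-step decay $e^{-tk/2}$---is exactly the intended one: the paper gives no self-contained proof of Proposition \ref{inner_point_estimate_prop}, deferring to \cite{JK}, where (\ref{inner_point_estimate_eq}) is (5.10) and is obtained by this kind of iterative block expansion. However, two steps of your write-up do not hold as written. First, the termination claim is unjustified: a branch stops only when it leaves $\left[n_1,n_2\right]$, and since every substitution offers a child on either side, a path may alternate left and right and stay inside $\left[n_1,n_2\right]$ indefinitely; ``each step moves at least $k/2$ in a compact window'' does not force exit, so the ``fully expanded'' sum over leaves is not yet legitimate. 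The standard repair is to truncate at depth $J$ and bound the surviving terms by $\left(2e^{-tk/2}\right)^{J}\max_{\left[N_1,N_2\right]}\left|\psi\right|\to 0$ as $J\to\infty$; you have the needed ingredient $2e^{-tk/2}<1$ but deploy it only to sum the weights of already-terminated leaves.

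Second, and more seriously, your per-path bound $\exp\left(-t\left(1-2/k\right)\left|n-p\right|\right)$ is in terms of the exit site $p$, and exit sites lie just outside $\left[n_1,n_2\right]$ (possibly even at $N_1-1$ or $N_2+1$, outside the ranges defining $N_1',N_2'$). So what your argument actually yields has exponents of order $\left|n-n_1\right|$ and $\left|n-n_2\right|$, up to $O\left(k\right)$ corrections---not $\left|n-N_1\right|$ and $\left|n-N_2\right|$ as in (\ref{inner_point_estimate_eq}). Nothing in the stated hypotheses allows you to absorb this difference into the $\left(1-o\left(1\right)\right)$ factor: if $\left[N_1,N_2\right]$ is much larger than $\left[n_1,n_2\right]$, regularity of the points of $\left[n_1,n_2\right]$ alone only suppresses $\left|\psi\left(n\right)\right|$ by roughly $e^{-t\max\left\{k/2,\,\mathrm{dist}\left(n,\left\{n_1,n_2\right\}\right)\right\}}$ relative to the nearby values just outside the block, so the literal bound with $\left|n-N_i\right|$ is out of reach (and false in general). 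The identification of the two exponents is legitimate only in the regime in which the estimate is used in \cite{JK} and in Corollary \ref{cor_growth_between_resonances}, where the buffers $\left[N_1,n_1\right)$ and $\left(n_2,N_2\right]$ have width comparable to the regularity scale $k$, so the discrepancy is $O\left(k\right)=o\left(\left|n-N_i\right|\right)$. Your final ``absorb into $\left(1-o\left(1\right)\right)$'' step silently assumes this extra structure; either make it an explicit hypothesis or state the conclusion with $n_1,n_2$ in the exponents.
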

\begin{remark}
	\begin{enumerate}
		\item Proposition \ref{inner_point_estimate_prop} did not appear in \cite{JK} in this precise form. However, (\ref{inner_point_estimate_eq}) is precisely (5.10) there.
		\item Here, $o\left(1\right)$ is as $k\to\infty$.
	\end{enumerate}
\end{remark}
%\subsubsection{Green's function and Cramer's rule}
%For every $n\in\mathbb{N}$ and $x\in\left[0,1\right]$, let us denote
%\begin{center}
%	$H_n\left(x\right)=P_{\left[0,n\right]}H\left(x\right)P_{\left[0,n\right]}$,
%\end{center}
%and
%\begin{center}
%	$P_n\left(x\right)=\det H_n\left(x\right)$.
%\end{center}
%It is not hard to see (see, e.g.\ (32) in \cite{JW}) that
%\begin{equation}\label{eq_det_estimate}
%	\left|P_n\left(x\right)\right|\leq e^{\left(L\left(E\right)+o\left(1\right)\right)n}.
%\end{equation}
%Fix an interval $J\subseteq\mathbb{Z}$. Suppose that $J=\left[n_1,n_2\right]=\left[n_1,n_1+N-1\right]$ for some $n_1,n_2\in\mathbb{Z}$ and $N\in\mathbb{N}$. By Cramer's rule, for every $y\in J$ we have
%\begin{equation}\label{eq_cramer_1}
%	\left|G_J\left(n_1,y\right)\right|=\left|\frac{P_{n_2-y}\left(x+\left(y+1\right)\alpha\right)}{P_N\left(x+n_1\alpha\right)}\right|,
%\end{equation}
%\begin{equation}\label{eq_cramer_2}
%	\left|G_J\left(y,n_2\right)\right|=\left|\frac{P_{y-n_1}\left(x+n_1\alpha\right)}{P_N\left(x+n_1\alpha\right)}\right|.
%\end{equation}
\subsubsection{Regularity estimates in certain regions}
Throughout this section, we fix some constant $C>0$. Recall that $\left(q_n\right)_{n=1}^\infty$ is the sequence of denominators in the continued fraction approximation of $\alpha$. In the results of this section, $o\left(1\right)$ will always be as $n\to\infty$.
\begin{prop}\emph{\cite[Proposition 5.6]{JK}}\label{prop_diophantine_transition}
	Assume that $q_{n+1}\leq q_n^C$. Then, every point $m\in\mathbb{Z}$ with
	\begin{center}
		$\left|m\right|\in\left[\floor{\frac{q_n}{2}+1},q_{n+1}-\floor{\frac{q_n}{2}}-1\right]$
	\end{center}
	is $\left(L\left(E\right)-o\left(1\right),q_n\right)$-regular.
\end{prop}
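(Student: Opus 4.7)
The plan is to control $G_I\left(m, n_i\right)$ through Cramer's rule: express it as a ratio of determinants, bound the numerator from above via the Lyapunov exponent, and bound the denominator $\det\left(H_I - E\right)$ from below via a Wegner-type estimate that exploits $\gamma$-monotonicity. I would choose $I = \left[n_1, n_2\right]$ to be an interval of length $q_n$ with $m$ near its center. The range condition $\left|m\right| \in \left[\lfloor q_n/2 \rfloor + 1,\, q_{n+1} - \lfloor q_n/2 \rfloor - 1\right]$ is precisely what allows such an $I$ to fit strictly inside $\left(0, q_{n+1}\right)$ or $\left(-q_{n+1}, 0\right)$, keeping it away from the near-resonances of the shift at $j = 0, \pm q_{n+1}$, which in the monotone setting would otherwise spoil the Wegner estimate below.

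Writing
\[
G_I\left(j, k\right) = \frac{u_L\left(j\right)\,u_R\left(k\right)}{\det\left(H_I - E\right)}, \qquad j \leq k,
\]
where $u_L, u_R$ are the Dirichlet solutions from the two endpoints of $I$ (normalized so that their Wronskian is proportional to $\det\left(H_I - E\right)$), reduces the problem to two bounds. For the numerator, the uniform transfer-matrix upper bound
\[
\left\|\Phi_k\left(y, E\right)\right\| \leq e^{\left(L\left(E\right) + o\left(1\right)\right) k}, \qquad k \to \infty,
\]
holds uniformly in $y \in \left[0, 1\right)$ by the upper-semicontinuity argument adapted to monotone potentials in \cite{Kach}, and gives $\left|u_L\left(m\right)\right| \leq e^{\left(L\left(E\right) + o\left(1\right)\right)\left(m - n_1\right)}$ and similarly for $u_R\left(m\right)$.

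The lower bound on $\left|\det\left(H_I - E\right)\right|$ is the principal difficulty, and is where the $\gamma$-monotonicity and the Diophantine hypothesis $q_{n+1} \leq q_n^C$ meet. By the Thouless formula,
\[
\int_0^1 \log \left| \det\left(H_{\left[1, q_n\right]}\left(y\right) - E\right) \right| dy = \left(L\left(E\right) + o\left(1\right)\right) q_n,
\]
so one needs a pointwise lower bound essentially matching this average. The $\gamma$-monotonicity of $f$ makes the eigenvalues of $H_{\left[1, q_n\right]}\left(y\right)$ move in $y$ with quantitative monotone speed, so the exceptional set on which $\log\left|\det\right|$ falls substantially below its average has only polynomially small Lebesgue measure. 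The hypothesis $q_{n+1} \leq q_n^C$ then guarantees that the shift orbit $\left\{x + j\alpha\right\}$ for $j$ in the allowed range spaces points at distance $\sim q_n^{-C}$, large enough to miss that exceptional set, yielding $\left|\det\left(H_I - E\right)\right| \geq e^{\left(L\left(E\right) - o\left(1\right)\right) q_n}$ for the specific $I$ containing $m$.

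Combining the two bounds and using the identity $\left(n_2 - n_1\right) - \left(n_2 - m\right) = m - n_1$ (and its reflection at the other endpoint) gives
\[
\left|G_I\left(m, n_i\right)\right| \leq e^{-\left(L\left(E\right) - o\left(1\right)\right)\left|m - n_i\right|},
\]
which is the claimed $\left(L\left(E\right) - o\left(1\right),\, q_n\right)$-regularity. The hard part is clearly the pointwise-versus-average transfer for $\log\left|\det\left(H_I - E\right)\right|$: in the generic quasiperiodic setting this step demands an avalanche principle together with Bourgain--Goldstein large-deviation estimates, whereas for $\gamma$-monotone potentials the monotone-cocycle framework of \cite{JK2, Kach} carries out the Wegner step directly, and that framework is what I would invoke here.
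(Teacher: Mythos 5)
This proposition is not proved in the paper at all: it is imported verbatim from \cite[Proposition 5.6]{JK}, so the only meaningful comparison is with the argument there, which is indeed organized around Cramer's rule and determinant estimates for monotone potentials. Your overall architecture (Green's function as a ratio of box determinants, upper bound on the numerator, lower bound on the denominator exploiting $\gamma$-monotonicity, Diophantine separation of the relevant phases) is the right one, but two of your steps have genuine gaps. First, the claimed uniform bound $\sup_y\lVert\Phi_k\left(y,E\right)\rVert\leq e^{\left(L\left(E\right)+o\left(1\right)\right)k}$ is not available in this setting: $f$ is only assumed $\gamma$-monotone with $\int_0^1\log\left(1+\left|f\right|\right)dx<\infty$, so it is discontinuous at the integers and may be unbounded (it may even take the value $-\infty$). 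Upper-semicontinuity/Furman-type arguments require a continuous (in particular bounded) cocycle, and coping with the absence of a uniform upper bound is precisely one of the main technical points of \cite{Kach,JK}: there the numerator is kept as a box determinant and the sites where the potential is huge are handled by cancellation against the same factors in the denominator, together with determinant bounds that use only the $\log$-integrability. Invoking ``upper semicontinuity adapted to monotone potentials'' papers over exactly the hard part.

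Second, the Wegner step as you phrase it proves too little. Smallness of the Lebesgue measure of the exceptional phase set, combined with $\sim q_n^{-C}$ spacing of the orbit, does not imply that the finitely many phases relevant to the given $x$ avoid that set: the exceptional set could consist of many tiny intervals sitting exactly at orbit points, and, more importantly, the statement is needed for \emph{every} $x$ (Theorem \ref{main_thm_intro} is pointwise in $x$), so no positive-measure family of phases can be discarded. What $\gamma$-monotonicity actually buys is structural rather than measure-theoretic: each eigenvalue branch of the box restriction moves in $x$ with speed at least $\gamma$, hence crosses $E$ at most once, so the bad set is a union of at most $q_n$ short intervals; the Diophantine separation of the candidate phases then allows an elimination/counting argument over the admissible positions of the length-$q_n$ window containing $m$ (this freedom is built into the definition of regularity), producing at least one good window. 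This pointwise elimination mechanism is exactly what \cite[Theorem 6.4]{JK} (reproduced as Theorem \ref{JK_thm_6.4} in the Appendix) formalizes, and it, not a measure estimate, is what carries the proof. Your closing remark that the monotone framework replaces the avalanche principle and large-deviation machinery is correct, but as written your sketch does not supply the substitute argument.
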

Let $\tau>0$ be some small parameter. Following the technique introduced in \cite{LY1}, let us denote by $n_0\in\mathbb{N}$ be the least positive integer such that $q_{n-n_0}\leq\tau q_n$, and subsequently let $s>0$ be the largest such that $2sq_{n-n_0}\leq\tau q_n$. Also let $b_n\coloneqq\floor{\tau q_n}$. Given $l\in\mathbb{Z}$, let $R_l=\left[lq_n-b_n,lq_n+b_n\right]$. Given a solution $u$ to (\ref{ev_eq_line_op}), let $r_l^u\coloneqq\underset{x\in R_l}{\max}\left|u\left(x\right)\right|$.
\begin{definition}
	A point $k\in\left[-q_{n+1},q_{n+1}\right]$ is called \textit{$n$-resonant} if $k\in\underset{l\in\mathbb{Z}}{\bigcup} R_l$. Otherwise, $k$ is called \textit{$n$-nonresonant}
\end{definition}
We will use the following result, which establishes regularity of points that are non-resonant, in the case where $q_{n+1}>q_n^C$.
\begin{prop}\emph{\cite[Lemmas 5.10 and 5.11]{JK}}\label{JK_Liouville_Lemmas}
	Suppose that $q_{n+1}>q_n^C$.
	\begin{enumerate}
		\item If $s\leq q_{n-n_0}^C$, then every $n$-nonresonant point $n\in\left[-q_{n+1},q_{n+1}\right]$ is $\left(L\left(E\right)-o\left(1\right),2sq_{n-n_0}-1\right)$-regular.
		\item If $s\geq q_{n-n_0}^C$, then every $n$-nonresonant point $n\in\left[-q_{n+1},q_{n+1}\right]$ is $\left(L\left(E\right)-o\left(1\right),2s'q_{n-n_0}-1\right)$-regular, where $s'=\floor{\frac{s}{10}}$.
	\end{enumerate}
\end{prop}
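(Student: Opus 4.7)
The plan is to deduce the claimed $(L(E)-o(1),\cdot)$-regularity at the Liouvillean scale $\sim sq_{n-n_0}$ from the Diophantine-type regularity at the smaller scale $q_{n-n_0}$ already given by Proposition \ref{prop_diophantine_transition}, stitched together via the chaining mechanism of Proposition \ref{inner_point_estimate_prop}. The splitting $n_0=n_0(\tau,n)$ and the parameter $s$ are engineered so that (a) the level $n-n_0$ is small enough to be in the Diophantine regime at its own scale, while (b) the target window $2sq_{n-n_0}$ still fits strictly between two consecutive resonant zones $R_l,R_{l+1}$ at level $n$.

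First I would verify that at scale $n-n_0$ the hypothesis $q_{n-n_0+1}\le q_{n-n_0}^{C_1}$ of Proposition \ref{prop_diophantine_transition} is satisfied, with $C_1$ depending only on $C$, in each of the two subcases. This is purely arithmetic: from $2sq_{n-n_0}\le\tau q_n<2(s+1)q_{n-n_0}$ and $q_{n-n_0+1}\le q_n$ one bounds $q_{n-n_0+1}$ polynomially in $q_{n-n_0}$ using $s\le q_{n-n_0}^C$ in case (1), and the definition of $n_0$ together with a uniform bound on $q_{n-n_0+1}/q_{n-n_0}$ in case (2). Invoking Proposition \ref{prop_diophantine_transition} at level $n-n_0$ then produces $(L(E)-o(1),q_{n-n_0})$-regularity for every lattice point lying in the standard Diophantine window at that scale.

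Second, for a fixed $n$-nonresonant point $m\in[-q_{n+1},q_{n+1}]$, non-resonance provides $|m-lq_n|>b_n=\lfloor\tau q_n\rfloor$ for every $l\in\mathbb{Z}$, so the candidate interval $I=[m-sq_{n-n_0},m+sq_{n-n_0}]$ (respectively with $s'$ in place of $s$) lies entirely in the gap between the two nearest resonant zones because $2sq_{n-n_0}\le\tau q_n=b_n+o(q_n)$. Translating the base-scale regularity along $\{m+jq_{n-n_0}\}$, I would cover $I$ by overlapping blocks on each of which the block Green's function decays like $e^{-(L(E)-o(1))q_{n-n_0}}$. The chaining argument behind Proposition \ref{inner_point_estimate_prop} — iterated use of (\ref{sol_exp_green_fn_eq}) — then assembles these block estimates into the required bound $|G_I(m,m_i)|\le e^{-(L(E)-o(1))|m-m_i|}$ on $I$, i.e.\ the $(L(E)-o(1),2sq_{n-n_0}-1)$-regularity of $m$.

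The main obstacle is case (2), where $s$ may be as large as a positive power of $q_{n-n_0}$: a direct chain of $\sim 2s$ blocks accumulates pre-exponential factors and per-block exponential slack that, summed over the whole chain, would destroy the rate $L(E)-o(1)$. The shrinkage $s\mapsto s'=\lfloor s/10\rfloor$ is precisely the slack needed to absorb these losses, so that a chain of length $\sim 2s'$ still yields rate $L(E)-o(1)$ after polynomial prefactors are discounted. Making this bookkeeping precise — using Cramer's rule for $G_I$ together with the monotonicity-based determinant lower bounds from \cite{JK2,Kach} on intervals $J$ of length $q_{n-n_0}$ that avoid $\bigcup_l R_l$ — is the technical heart of Lemmas 5.10–5.11 of \cite{JK}, and my plan follows that route rather than attempting an independent derivation.
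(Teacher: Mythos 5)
First, note that the paper does not prove this proposition at all: it is imported verbatim from \cite[Lemmas 5.10 and 5.11]{JK}, so there is no in-paper argument to compare against. Judged on its own merits, your proposed route has two genuine gaps. The first is that the base-scale input you want to chain is not available where you need it. Proposition \ref{prop_diophantine_transition} applied at level $n-n_0$ only yields regularity for points $m$ with $\left|m\right|\lesssim q_{n-n_0+1}$, whereas the proposition concerns nonresonant points ranging over all of $\left[-q_{n+1},q_{n+1}\right]$ with $q_{n+1}>q_n^C$, which is exponentially larger than any power of $q_{n-n_0}$. Moreover, the hypothesis $q_{n-n_0+1}\le q_{n-n_0}^{C_1}$ that you claim is ``purely arithmetic'' genuinely fails in case (2): by minimality of $n_0$ one only knows $q_{n-n_0+1}>\tau q_n$ and $q_{n-n_0}\le\tau q_n$, and when $s\ge q_{n-n_0}^C$ one has $q_n\gtrsim q_{n-n_0}^{C+1}$, so $q_{n-n_0+1}$ can be comparable to $q_n$ while $q_{n-n_0}$ is as small as $\log q_n$ --- there is no uniform bound on $q_{n-n_0+1}/q_{n-n_0}$.

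The second, more structural, gap is that chaining via Proposition \ref{inner_point_estimate_prop} and (\ref{sol_exp_green_fn_eq}) propagates estimates on \emph{solutions}, not on Green's functions. To establish $\left(L\left(E\right)-o\left(1\right),k\right)$-regularity at the large scale $k\sim 2sq_{n-n_0}$ you must exhibit a single interval $J$ of that length on which $\left|G_J\left(m,n_i\right)\right|$ decays, and that requires an a priori bound on $\|\left(H_J-E\right)^{-1}\|$, i.e.\ a quantitative separation of $E$ from $\sigma\left(H_J\right)$. Gluing small-box Green's functions does not supply this (the large box can resonate even when the small boxes do not), and this eigenvalue-separation step is precisely the content of \cite[Lemmas 5.10, 5.11]{JK}: one considers the family of candidate intervals $\left[h_j,h_j+2sq_{n-n_0}-2\right]$, uses the $\gamma$-monotonicity to separate the phases $x+h_j\alpha$, and runs the counting argument of \cite[Theorem 6.4]{JK} (quoted here as Theorem \ref{JK_thm_6.4}) to find a good interval. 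The case split $s\lessgtr q_{n-n_0}^C$ and the replacement $s\mapsto s'=\floor{s/10}$ come from arranging sufficient phase separation among the candidates, not from absorbing accumulated chaining losses as you suggest. Your closing sentence does point at Cramer's rule and determinant lower bounds, but that is the entire proof, not a finishing touch on the chaining scheme; as written the proposal does not constitute a proof and does not reflect the actual argument of \cite{JK}.
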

Proposition \ref{JK_Liouville_Lemmas} and (\ref{inner_point_estimate_eq}) have the following immediate corollary.
\begin{corollary}\label{cor_growth_between_resonances}
	Suppose that $q_{n+1}>q_n^C$. Then for every $j\in\mathbb{Z}$ and every \mbox{$k\in\left[jq_n,\left(j+1\right)q_n\right]\cap\left[-q_{n+1},q_{n+1}\right]$} which is $n$-nonresonant,
	\begin{equation}
		\left|\psi\left(k\right)\right|\leq e^{-\left(L\left(E\right)-o\left(1\right)\right)\left(1-o\left(1\right)\right)\left|k-jq_n+b_n\right|}r_j^\psi+e^{-\left(L\left(E\right)-o\left(1\right)\right)\left(1-o\left(1\right)\right)\left|k-\left(j+1\right)q_n+b_n\right|}r_{j+1}^\psi.
	\end{equation}
\end{corollary}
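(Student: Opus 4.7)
The plan is to apply Proposition \ref{inner_point_estimate_prop} to a carefully chosen pair of nested intervals that isolate the non-resonant stretch sitting between the two consecutive resonant blocks $R_j$ and $R_{j+1}$. The corollary then falls out by identifying the endpoints of the outer interval with the edges of those resonant blocks.

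Concretely, I would set $n_1 = jq_n + b_n + 1$, $n_2 = (j+1)q_n - b_n - 1$ and $N_1 = jq_n - b_n$, $N_2 = (j+1)q_n + b_n$. Since $b_n = \floor{\tau q_n}$ with $\tau$ small, no resonant block $R_l$ with $l \notin \{j, j+1\}$ can intersect $[jq_n, (j+1)q_n]$, so $[n_1, n_2]$ is exactly the set of $n$-nonresonant points in $[jq_n, (j+1)q_n]$, while $[N_1, n_1) \cap \mathbb{Z} \subseteq R_j$ and $(n_2, N_2] \cap \mathbb{Z} \subseteq R_{j+1}$. Under the assumption $q_{n+1} > q_n^C$, Proposition \ref{JK_Liouville_Lemmas} then guarantees that every point of $[n_1, n_2] \cap [-q_{n+1}, q_{n+1}]$ is $(L(E) - o(1), K)$-regular, with $K$ equal to either $2sq_{n-n_0} - 1$ or $2s'q_{n-n_0} - 1$. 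In either case, $K$ tends to infinity as $n \to \infty$, which is what drives the $o(1)$ improvement in Proposition \ref{inner_point_estimate_prop}.

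With these choices in hand, the main verification is that for every $m \in [n_1, n_2]$ the regularity interval $I(m)$ actually fits inside $[N_1, N_2]$. Since $K \leq \tau q_n \leq b_n + 1$ by the very construction of $s$ and $s'$, and since each $I(m)$ extends at most $K - 1$ on either side of $m$, the inclusion $I(m) \subseteq [N_1, N_2]$ holds for every $m \in [n_1, n_2]$ once $n$ is sufficiently large. Proposition \ref{inner_point_estimate_prop} now applies directly with $t = L(E) - o(1)$, yielding
\[
|\psi(k)| \leq e^{-(L(E) - o(1))(1 - o(1))|k - N_1|}|\psi(N_1')| + e^{-(L(E) - o(1))(1 - o(1))|k - N_2|}|\psi(N_2')|
\]
for every $k \in [n_1, n_2]$. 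Because $N_1' \in [N_1, n_1) \cap \mathbb{Z} \subseteq R_j$ and $N_2' \in (n_2, N_2] \cap \mathbb{Z} \subseteq R_{j+1}$, the two maxima are bounded by $r_j^\psi$ and $r_{j+1}^\psi$ respectively, and substituting $N_1 = jq_n - b_n$ and $N_2 = (j+1)q_n + b_n$ produces the distances $|k - jq_n + b_n|$ and $|(j+1)q_n + b_n - k|$ appearing in the statement.

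The only real obstacle, and it is a mild one, is the bookkeeping to confirm that the regularity intervals do not leak out of $[N_1, N_2]$ near the edges $n_1$ and $n_2$; this reduces to the inequality $K \leq 2b_n + 2$, which is immediate from the definition of $s$ (and hence of $s'$). Everything else is a direct transcription of the hypotheses of Proposition \ref{inner_point_estimate_prop} into the quasiperiodic notation of this section, which is why the authors are comfortable calling the result an ``immediate corollary.''
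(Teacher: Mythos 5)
Your argument is exactly the one the paper intends: the paper offers no written proof beyond calling the corollary immediate from Proposition \ref{JK_Liouville_Lemmas} and (\ref{inner_point_estimate_eq}), and your choice of $[n_1,n_2]$ as the nonresonant stretch between $R_j$ and $R_{j+1}$ with $[N_1,N_2]$ reaching the far edges of those blocks, plus the check that the regularity intervals (of length at most $\tau q_n$) stay inside $[N_1,N_2]$, is precisely how that implication is meant to be carried out. Your version even yields a marginally stronger exponent in the second term (distance to $(j+1)q_n+b_n$ rather than $(j+1)q_n-b_n$), which of course implies the stated bound, so the proposal is correct and essentially identical to the paper's approach.
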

We will also need the following Proposition, which connects regularity and asymptotic properties of solutions discussed in the previous section. Recall that
\begin{center}
	$\omega_\pm\left(L\right)=\left(\underset{\theta\in\left[0,\pi\right)}{\min}\|u_\theta\|_L^\pm\right)\left(\underset{\theta\in\left[0,\pi\right)}{\max}\|u_\theta\|_L^\pm\right)$.
\end{center}
\begin{prop}\label{prop_green_omega_estimate}
	Fix $L>0$. Suppose that $k\in\mathbb{N}$ is such that $k,k+1\in\left[1,L\right]$, $k,k+1$ are both $\left(\mu,n\right)$ regular and $I\left(k\right),I\left(k+1\right)\subseteq\left[1,L\right]$. Then
	\begin{equation}\label{omega_lower_bd}
		\omega_+\left(L\right)>\frac{e^{\frac{\mu n}{2}}}{4}.
	\end{equation}
\end{prop}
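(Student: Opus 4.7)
The plan is to extract the lower bound from the constancy of the Wronskian, combined with the Green's function decay afforded by regularity. Let $\theta_0 \in [0,\pi)$ achieve $\min_\theta \|u_\theta\|_L^+$ and set $\widetilde{\theta}_0 = \theta_0 + \frac{\pi}{2} \bmod \pi$. Since $\|u_{\widetilde{\theta}_0}\|_L^+ \leq \max_\theta \|u_\theta\|_L^+$, we have the trivial bound
\begin{equation*}
\omega_+(L) \geq \|u_{\theta_0}\|_L^+ \cdot \|u_{\widetilde{\theta}_0}\|_L^+,
\end{equation*}
so it suffices to bound this product from below by $\frac{1}{4} e^{\mu n/2}$.

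Evaluating the Wronskian identity (\ref{eq_const_wronskian}) at position $k$ and applying the triangle inequality gives
\begin{equation*}
1 \leq |u_{\theta_0}(k)|\,|u_{\widetilde{\theta}_0}(k+1)| + |u_{\theta_0}(k+1)|\,|u_{\widetilde{\theta}_0}(k)|.
\end{equation*}
For each of the four factors, I will estimate $|u_\theta(k)|$ and $|u_\theta(k+1)|$ by invoking (\ref{sol_exp_green_fn_eq}) with $I = I(k)$ and $I = I(k+1)$ respectively. Since $k$ is $(\mu,n)$-regular with $I(k) = [n_1,n_2]$ and $|k-n_i| \geq n/2$, the Green's function factors are bounded by $e^{-\mu n/2}$, giving
\begin{equation*}
|u_\theta(k)| \leq e^{-\mu n/2}\bigl(|u_\theta(n_1-1)| + |u_\theta(n_2+1)|\bigr),
\end{equation*}
and analogously for $k+1$ using $I(k+1)$.

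The containment $I(k), I(k+1) \subseteq [1,L]$ places the four evaluation points $n_1 - 1, n_2+1, n_1'-1, n_2'+1$ inside $[0,L+1]$, which allows each $|u_\theta(\cdot)|$ there to be dominated by a fixed multiple of $\|u_\theta\|_L^+$ via the pointwise bound $|u_\theta(j)| \leq \|u_\theta\|_L^+$ for $j \in [1,\lfloor L\rfloor]$, together with the boundary estimate $|u_\theta(0)| \leq 1$ coming from the normalization (\ref{eq_norm_gen_ef}), and the weighted contribution of $u_\theta(\lfloor L\rfloor+1)$ in the definition of the $+$-norm. Plugging these bounds into the Wronskian inequality produces
\begin{equation*}
1 \leq C\, e^{-\mu n}\, \|u_{\theta_0}\|_L^+ \cdot \|u_{\widetilde{\theta}_0}\|_L^+,
\end{equation*}
from which the claimed lower bound on $\omega_+(L)$ follows after rearranging (with considerable slack, since $e^{\mu n}$ actually dominates $e^{\mu n/2}$).

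The main technical nuisance, rather than a real obstacle, is the boundary accounting: when $n_1 = 1$ the term $|u_\theta(0)|$ is not literally part of $\|u_\theta\|_L^+$, and when $n_2 = \lfloor L\rfloor$ the term $|u_\theta(\lfloor L\rfloor+1)|$ enters only with the weight $(L-\lfloor L\rfloor)^{1/2}$. I would handle the first case using the identity $|u_\theta(0)|^2 + |u_\theta(1)|^2 = 1$, and the second by checking that in this case the asymmetry between $n_2-k \geq n/2$ and the distance to $L$ still allows the Green's function estimate to absorb the weight; either way, the looseness is easily covered by the generous constant $\tfrac{1}{4}$ in (\ref{omega_lower_bd}).
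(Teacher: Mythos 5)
Your core mechanism --- the Wronskian identity (\ref{eq_const_wronskian}) at $k$ combined with the expansion (\ref{sol_exp_green_fn_eq}) over the regularity intervals --- is the same as the paper's, but the execution differs in a way that matters. The paper applies the Green's function decay only to the minimizing solution $u_{\theta_1}$, bounds its values at $k,k+1$ by $2re^{-\mu n/2}$ with $r=\max_{j\in[1,L]}\left|u_{\theta_1}\left(j\right)\right|$, and then uses the Wronskian to force $\max\left\{\left|u_{\theta_2}\left(k\right)\right|,\left|u_{\theta_2}\left(k+1\right)\right|\right\}\geq e^{\mu n/2}/\left(4r\right)$ for the orthogonal phase $\theta_2$; since $\|u_{\theta_1}\|_L^+\geq r$ and $\|u_{\theta_2}\|_L^+\geq\max\left\{\left|u_{\theta_2}\left(k\right)\right|,\left|u_{\theta_2}\left(k+1\right)\right|\right\}$, the unknown $r$ cancels in the product and the constant $\frac{1}{4}$ comes out exactly, with no need to compare any value of the second (typically large) solution, or any exterior value, to a norm. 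You instead apply the decay to both $u_{\theta_0}$ and $u_{\widetilde{\theta}_0}$ and dominate all four exterior values by the corresponding norms; in the generic situation where the exterior points of $I\left(k\right),I\left(k+1\right)$ lie in $\left[1,\lfloor L\rfloor\right]$ this is valid and even gives the stronger bound $\omega_+\left(L\right)\geq e^{\mu n}/8$ (which yields the stated inequality once $e^{\mu n/2}>2$, so only for $\mu n$ not too small --- harmless in the applications, but worth noting since the paper's route gives the stated constant unconditionally).

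The weak spot is precisely the endpoint accounting you flag, and your proposed patches do not hold up. If $n_2=\lfloor L\rfloor$, the value $\left|u_\theta\left(\lfloor L\rfloor+1\right)\right|$ is simply not controlled by $\|u_\theta\|_L^+$ (its weight $L-\lfloor L\rfloor$ may vanish), and nothing in the Green's function estimate can ``absorb'' it: the factor $e^{-\mu n/2}$ is fixed while $\left|u_\theta\left(\lfloor L\rfloor+1\right)\right|$ is a priori unbounded, so that case cannot be closed the way you describe. Likewise, replacing $\left|u_\theta\left(0\right)\right|$ by $1$ does not produce a bound of the form $C\|u_\theta\|_L^+$, because there is no order-one lower bound on $\underset{\theta}{\min}\|u_\theta\|_L^+$ available here (recall $f$ may be unbounded), so the slack in the constant $\frac{1}{4}$ cannot be invoked to cover it. In fairness, the paper's own proof silently assumes the exterior points $n_1-1,n_2+1$ are where its local maximum $r$ controls them, which is the same corner case; in every application the regularity intervals sit well inside $\left[1,L\right]$, so this is immaterial. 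If you either adopt the $r$-cancellation or restrict, as the paper implicitly does, to exterior points inside $\left[1,\lfloor L\rfloor\right]$, your argument closes.
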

\begin{proof}
	Fix $\theta_1$ to be such that $\left\|u_{\theta_1}\right\|_L^+=\underset{\theta}{\min}\left\|u_\theta\right\|_L^+$. Also let $r=\underset{j\in\left[1,L\right]}{\max}\left|u_{\theta_1}\left(j\right)\right|$. By regularity and (\ref{sol_exp_green_fn_eq}), we have
	\begin{equation}
		\left|u_{\theta_1}\left(k\right)\right|,\left|u_{\theta_1}\left(k+1\right)\right|\leq2re^{-\frac{\mu n}{2}}.
	\end{equation}
	By constancy of the Wronskian (\ref{eq_const_wronskian}), we get for $\theta_2=\theta_1+\frac{\pi}{2}\mod\pi$
	\begin{equation}
		\max\left\{\left|u_{\theta_1}\left(k\right)\right|,\left|u_{\theta_2}\left(k+1\right)\right|\right\}\geq \frac{e^{\frac{\mu n}{2}}}{4r}
	\end{equation}
	which implies that $\left(\left\|u_{\theta_2}\right\|_L^+\right)^2\geq \frac{e^{\mu n}}{16r^2}$. In addition, $\left(\left\|u_{\theta_1}\right\|_L^+\right)^2\geq r^2$. Thus, we obtain
	\begin{center}
		$\omega_+\left(L\right)^2\geq\left(\left\|u_{\theta_2}\right\|_L^+\right)^2\left(\left\|u_{\theta_1}\right\|_L^+\right)^2\geq\frac{e^{\mu n}}{16r^2}r^2=\frac{1}{16}e^{\mu n}$,
	\end{center}
	which implies (\ref{omega_lower_bd}).
\end{proof}
\begin{remark}
	Although Proposition \ref{prop_green_omega_estimate} is formulated and proved for $\omega_+$, it is clear that it also holds for $\omega_-$.
\end{remark}
We will use the following lemma, which is essentially a special case of \cite[Theorem 6.4]{JK}. A proof is given in the Appendix.
\begin{lemma}\label{JK_Sec_6_Lemma}
	 Let $n\in\mathbb{N}$, $0\neq j\in\mathbb{Z}$. Set $I_1,I_2\subseteq\mathbb{Z}$ as follows:
	\begin{center}
		$I_1=\left[-\floor{\frac{3q_n}{2}},q_n-\floor{\frac{3q_n}{2}}-1\right]$,\\
		$I_2=\left[jq_n-\floor{\frac{3q_n}{2}},\left(j+1\right)q_n-\floor{\frac{3q_n}{2}}-1\right]$.
	\end{center}
	Then there exists $j_0\in I_1\cup I_2$ such that for $J=\left[j_0,j_0+2q_n-2\right]\coloneqq\left[n_1,n_2\right]$, for every $s\in J$ such that 
	\begin{equation}\label{eq_s_cond}
		j_0+\frac{1}{5}\left(2q_n-2\right)\leq s\leq j_0+2q_n-2-\frac{1}{5}\left(2q_n-2\right)
	\end{equation}
	we have
	\begin{equation}\label{eq_JK_sec_6}
		\left|G_{J}\left(s,n_i\right)\right|\leq\frac{e^{-\left|s-n_i\right|L\left(E\right)\left(1-o\left(1\right)\right)}}{\frac{d}{2}},
	\end{equation}
	where $d\geq e^{-\left(\beta+o\left(1\right)\right)q_n+\log\left|j\right|}$.
%	\begin{equation}\label{eq_det_lower_bound}
%		P_{\nu}\left(x+j_0\alpha\right)\geq e^{\log\left|j\right|+2q_n\left(L\left(E\right)-\frac{\beta_n}{2}-o\left(1\right)\right)},
%	\end{equation}
%	where $\beta_n=\frac{p_n}{q_n}$ is the continued fraction approximation of $\alpha$. 
\end{lemma}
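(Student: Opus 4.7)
The plan is to deduce the lemma from the proof of Theorem 6.4 in \cite{JK}, of which this statement is a specialisation; the strategy follows the standard Cramer's-rule / large-deviation template for Green's function estimates of quasi-periodic Schrödinger operators.

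First, I would apply Cramer's rule to the tridiagonal operator $H_J - E$, which for $s$ in the interior of $J = [n_1, n_2]$ yields
\[
|G_J(s, n_1)| = \frac{|\det(H_{[s+1,\, n_2]} - E)|}{|\det(H_J - E)|}, \qquad |G_J(s, n_2)| = \frac{|\det(H_{[n_1,\, s-1]} - E)|}{|\det(H_J - E)|}.
\]
This reduces (\ref{eq_JK_sec_6}) to (a) an upper bound on the one-sided determinants in the numerators, and (b) a lower bound on the full determinant in the denominator.

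For (a), I would invoke the uniform upper semi-continuity of the Lyapunov exponent in the monotone setting, proved in \cite{Kach} and used throughout \cite{JK} under the integrability hypothesis on $f$. This gives the pointwise bound $|\det(H_{[a,b]}(x) - E)| \leq e^{(b-a+1)L(E)(1+o(1))}$ uniformly in $x$, with $o(1)$ as $b - a \to \infty$. Applying this to segments of length $n_2 - s$ or $s - n_1$ produces the exponential numerator $e^{|s - n_i| L(E)(1 + o(1))}$ appearing in (\ref{eq_JK_sec_6}) after division by the denominator bound.

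For (b) — the main step — the key observation is that, as $j_0$ varies over $I_1 \cup I_2$, the translated phases $x + j_0 \alpha \bmod 1$ run through $2q_n$ values: one $q_n$-grid from $I_1$ together with a rigid translate by $jq_n\alpha \bmod 1$ from $I_2$. Since $\|jq_n \alpha\| \geq |j|/(2q_{n+1}) \geq |j|\,e^{-\beta(\alpha) q_n(1 + o(1))}$, the two grids are separated by at least this amount. Combining this with the standard subharmonic large-deviation estimate for the function $\phi \mapsto \tfrac{1}{2q_n - 1}\log|\det(H_{[1,\, 2q_n - 1]}(\phi) - E)|$ — the central tool of Section 6 of \cite{JK} — a pigeon-hole argument across the $2q_n$ candidate phases selects $j_0$ for which
\[
|\det(H_J - E)| \geq \tfrac{d}{2}\, e^{(2q_n - 1) L(E)(1 - o(1))}, \qquad d \geq e^{(-\beta(\alpha) + o(1))q_n + \log|j|}.
\]
Substituting this into the Cramer expression, and using the hypothesis that $s$ lies in the interior quarter of $J$ (so that $|s - n_i| \gtrsim q_n$ and the $o(1)$ corrections in both the numerator and the denominator can be absorbed into the surviving exponential factor) yields (\ref{eq_JK_sec_6}).

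The main obstacle is step (b). The separation of the two phase-grids by $|j|e^{-\beta(\alpha) q_n}$ follows from continued-fraction theory, but matching this separation quantitatively to the large-deviation estimate — so that at least one of the $2q_n$ candidates necessarily delivers the claimed lower bound on $|\det(H_J - E)|$ — is the technical core of the argument. This is precisely the reason for choosing two candidate intervals $I_1$ and $I_2$ placed on opposite sides of the resonance at $jq_n$, rather than a single window: one grid alone cannot avoid the bad set uniformly in $j$, but the offset pair can.
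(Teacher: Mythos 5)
There is a genuine gap in step (b), which you yourself identify as the core of the argument. You propose to obtain the lower bound on $\left|\det\left(H_J-E\right)\right|$ from ``the standard subharmonic large-deviation estimate'' for $\phi\mapsto\frac{1}{2q_n-1}\log\left|\det\left(H_{\left[1,2q_n-1\right]}\left(\phi\right)-E\right)\right|$, calling it the central tool of Section 6 of \cite{JK}. But no such tool exists in this setting: the potential is only assumed $\gamma$-monotone, possibly unbounded and with no regularity beyond (\ref{eq_gamma_monotone}), so the sampling function is neither analytic nor even continuous, and the subharmonicity/LDT machinery from the analytic (AMO-type) theory is unavailable --- this is precisely why \cite{JK} develops a different method. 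The actual mechanism behind \cite[Theorem 6.4]{JK}, which the paper simply invokes as a black box, is an eigenvalue-counting argument in the phase variable: the zeros of $x\mapsto\det\left(H_{2q_n-1}\left(x\right)-E\right)$ are governed by the (at most $2q_n-1$) eigenvalues of $H_{2q_n-1}$, the $2q_n$ candidate phases coming from $I_1\cup I_2$ organize into $q_n$ disjoint intervals of length $\frac{2}{9q_n}$ each containing exactly one phase from each grid, separated within the interval by $d=\left|j\right|\dist\left(q_n\alpha,\mathbb{Z}\right)$, and a pigeonhole ($q_n$ intervals versus $2q_n-1$ eigenvalues) produces an interval $K$ with at most one eigenvalue in it; one of the two $d$-separated candidates in $K$ is then at distance at least $\frac{d}{2}$ from that eigenvalue, which is exactly where the factor $\frac{d}{2}$ in (\ref{eq_JK_sec_6}) comes from. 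Your pigeonhole ``across the $2q_n$ candidate phases'' against a large-deviation bad set does not reproduce this counting and, without an LDT, has nothing to pigeonhole against.

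Two smaller points. In step (a), the uniform-in-$x$ upper bound $\left|\det\left(H_{\left[a,b\right]}\left(x\right)-E\right)\right|\leq e^{\left(b-a+1\right)L\left(E\right)\left(1+o\left(1\right)\right)}$ is not automatic here either, since $f$ may be unbounded ($f\left(x\right)\to-\infty$ is allowed); the upper estimates in \cite{JK} are obtained with care around the singular phases and are part of what Theorem 6.4 packages for you. Also, your phase-separation inequality is better stated as the paper does: the distance between the two candidate phases in the same interval equals $\left|j\right|\dist\left(q_n\alpha,\mathbb{Z}\right)$, and then $\left|j\right|\dist\left(q_n\alpha,\mathbb{Z}\right)\geq e^{\left(-\beta+o\left(1\right)\right)q_n+\log\left|j\right|}$; the bound you wrote for $\dist\left(jq_n\alpha,\mathbb{Z}\right)$ is not valid for all $j$ because of wrap-around. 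The efficient route --- and the one the paper takes --- is not to re-derive the determinant bounds at all, but to quote \cite[Theorem 6.4]{JK} together with the geometric facts about $\left(I_1\cup I_2\right)\alpha\bmod 1$ from \cite[Section 7.8]{JK} and apply the counting argument above.
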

\begin{remark}
	In \cite{JK}, localization is proved assuming that $L\left(E\right)>\beta\left(\alpha\right)$. However, the results which we state here do not require this and are true also when $L\left(E\right)\leq\beta\left(\alpha\right)$.
\end{remark}
\section{Proof of Theorem \ref{main_thm}}\label{section_proof_main_thm}
Fix $t_2$ such that $\frac{9\beta-L\left(E\right)}{9\beta}<t_2<1$. Let $\sigma>0$ be small enough so that
\begin{equation}\label{eq_t1_t2_def}
	t_1\coloneqq\frac{\beta-L\left(E\right)}{\beta}+\sigma<t_2<1.
\end{equation}
We will use the results presented in the previous section with $C=\frac{4}{t_2-t_1}$ and $\tau>0$ some fixed small constant. Following the previous section, Let $b_n=\floor{\tau q_n}$ and recall that for every $j\in\mathbb{Z}$, $R_j=\left[jq_n-b_n,jq_n+b_n\right]$.
\begin{prop}\label{prop_liouville_resonant_growth}
	Suppose $k\in\mathbb{Z}$ is $n$-resonant and satisfies $2q_n^2q_{n+1}^{t_1}<\left|k\right|<q_{n+1}^{t_2}$. Let $\phi$ be a generalized eigenfunction. Then we have
	\begin{equation}\label{eq_phi_k_estimate}
		\left|\phi\left(k\right)\right|\leq\exp\left(-\frac{1}{2}\left(L\left(E\right)-\left(1-t_1\right)\beta-o\left(1\right)\right)\left|k\right|\right).
	\end{equation}
\end{prop}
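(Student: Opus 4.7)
My plan is to apply Lemma \ref{JK_Sec_6_Lemma} to the index $j \in \mathbb{Z}$ with $k \in R_j$ and combine the resulting Green's function bound with the Poisson-type identity (\ref{sol_exp_green_fn_eq}) and the off-resonance decay of Corollary \ref{cor_growth_between_resonances}, in an iterative argument along the chain of resonances between $0$ and $k$.

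First, I would let $j \in \mathbb{Z}$ be the unique index with $k \in R_j$, so that $|j| \asymp |k|/q_n$ lies in roughly $(2 q_n q_{n+1}^{t_1}, q_{n+1}^{t_2}/q_n)$. Applying Lemma \ref{JK_Sec_6_Lemma} for this $j$ produces some $j_0 \in I_1 \cup I_2$ and a box $J = [n_1,n_2]$ of length $2q_n - 2$ on which $|G_J(s, n_i)| \leq (2/d) e^{-|s-n_i| L(E)(1-o(1))}$ for $s$ in the middle half of $J$, with $d \geq |j| e^{-\beta q_n(1-o(1))}$. In the main case $j_0 \in I_2$, the box surrounds $jq_n$ and $k$ lies in the interior middle half, so (\ref{sol_exp_green_fn_eq}) gives
\begin{equation*}
|\phi(k)| \leq \sum_{i=1,2} |G_J(k, n_i)| \cdot |\phi(n_i \pm 1)|.
\end{equation*}
The boundary points $n_1 - 1$ and $n_2 + 1$ lie in the non-resonant midzones of the inter-resonance intervals adjacent to $R_j$, so Corollary \ref{cor_growth_between_resonances} bounds $|\phi(n_i \pm 1)|$ by $e^{-L(E) q_n /2(1-o(1))}$ times $\max_{|j'-j| \leq 2} M_{j'}$, where $M_{j'} := \max_{x \in R_{j'}} |\phi(x)|$. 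Using $1/d \leq |j|^{-1} e^{\beta q_n}$ and $|k - n_i| \geq q_n/2$, and absorbing the self-referential $M_j$ (possible because the prefactor is exponentially small under the assumption $|j| > 2q_n q_{n+1}^{t_1}$ combined with $q_{n+1} \leq e^{(\beta+o(1))q_n}$ and $t_1 = (\beta - L(E))/\beta + \sigma$), this yields a recursion
\begin{equation*}
M_j \leq \frac{C e^{(\beta - L(E))q_n(1-o(1))}}{|j|} \max_{|j'-j|\leq 2,\, j' \neq j} M_{j'}.
\end{equation*}

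The case $j_0 \in I_1$ has to be handled separately, since then $J$ surrounds the origin rather than $jq_n$; applying the same Green's function estimate at $s = 0, 1$ in combination with the normalization $|\phi(0)|^2 + |\phi(1)|^2 = 1$ forces a lower bound of order $d \cdot e^{L(E) q_n /2}$ on $|\phi(n_1 - 1)| + |\phi(n_2 + 1)|$, which, at points of modulus $O(q_n)$, is incompatible with the polynomial generalized-eigenfunction bound $|\phi(n)| \leq C(1+|n|)$ once $\sigma$ is taken sufficiently small. Finally, I would iterate the recursion starting from the a priori polynomial bound at $|j'| \sim q_n q_{n+1}^{t_1}$ up to $|j'| = |j|$; Stirling's formula converts the telescoping product $\prod_{j'}|j'|^{-1} e^{(\beta - L(E))q_n}$ into decay of order $e^{-\sigma \beta |j| q_n (1-o(1))}$, which, since $|k| \asymp |j| q_n$ and $\sigma \beta = L(E) - (1-t_1)\beta$, is exactly the claimed bound.

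The principal obstacle I anticipate is the bookkeeping of the iterative argument in the presence of the $M_{j \pm 1}, M_{j \pm 2}$ couplings in the recursion: since the multiplier is only small when $|j'|$ is in the resonant range of interest, one must carefully organize the iteration so that bounds propagate monotonically in $|j'|$ (exploiting the fact that the trailing neighbours $M_{j' \pm 2}$ are controlled either by the a priori polynomial bound or by prior steps of the induction), together with the separate $j_0 \in I_1$ analysis which requires tracking the normalization of $\phi$ through the Green's function identity.
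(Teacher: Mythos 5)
Your overall architecture is the same as the paper's: define block maxima over the resonant windows $R_{j'}$, combine Lemma \ref{JK_Sec_6_Lemma} with the expansion (\ref{sol_exp_green_fn_eq}) and Corollary \ref{cor_growth_between_resonances} to get a contraction from one block to its neighbours, and iterate, using the normalization (\ref{eq_norm_gen_ef}) to rule out the box returned by the lemma sitting at the origin. However, two steps as you describe them would fail. First, your exclusion of the case $j_0\in I_1$ does not work: expanding at $s=0,1$ you only use Green decay over distance $\sim q_n/2$ with prefactor $2/d$, and since $d$ can be as small as about $|j|e^{-(\beta-o(1))q_n}$ with $|j|\geq 2q_nq_{n+1}^{t_1}$, the forced lower bound on $|\phi(n_1-1)|+|\phi(n_2+1)|$ is of order $e^{(\sigma\beta-L(E)/2-o(1))q_n}$, i.e.\ exponentially \emph{small} for small $\sigma$ (and smaller still when $\log q_{n+1}\ll\beta q_n$). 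Taking $\sigma$ small makes this worse, not better, so there is no conflict with $|\phi(n)|\leq C_E(1+|n|)$ and the case is not excluded. The paper's route is different in substance: it derives the same contraction inequality at the block $R_0$, gaining the full distance $\approx q_n$ by combining the Green decay with the inter-resonance decay of Corollary \ref{cor_growth_between_resonances}, so that $r_0\leq e^{-(L(E)-(1-t_1)\beta-o(1))q_n}\max\{r_{-1},r_0,r_1\}$, and then contradicts $r_0\geq 1/\sqrt{2}$ using the polynomial bound on $r_{\pm1}$. Note also that you need this exclusion at every intermediate block of the iteration (where $|j'|$ is smaller yet), and that the boundary points $n_i\pm1$ need not lie in nonresonant midzones, so a resonant/nonresonant case split as in the paper is unavoidable.

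Second, your final accounting does not produce the stated rate. Stopping the iteration at $|j'|\sim q_nq_{n+1}^{t_1}$ gives only $|j|-q_nq_{n+1}^{t_1}$ steps, which is about $|j|/2$ when $|k|$ is near its lower bound $2q_n^2q_{n+1}^{t_1}$ (exactly the regime used later in Lemma \ref{prod_L_norms_growth_lemma}), so you lose a constant factor in the exponent relative to (\ref{eq_phi_k_estimate}); the paper iterates down to $\lfloor l/q_n\rfloor$, i.e.\ over $(1-o(1))|j|$ steps. Moreover, the Stirling cancellation you invoke, $\prod_{j'}|j'|^{-1}e^{(\beta-L(E))q_n}\approx e^{-\sigma\beta|j|q_n}$, tacitly assumes $\log|j'|\approx t_1\beta q_n$, i.e.\ $q_{n+1}\approx e^{\beta q_n}$; only the upper bound $q_{n+1}\leq e^{(\beta+o(1))q_n}$ is available, and if, say, $\log q_{n+1}=\beta q_n/2$ your per-step multiplier $e^{(\beta-L(E))q_n}/|j'|$ need not even be smaller than $1$. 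The correct bookkeeping uses the actual separation $d=|j|\dist\left(q_n\alpha,\mathbb{Z}\right)\geq|j|/\left(2q_{n+1}\right)$ from \cite{JK} together with $|j'|\geq q_{n+1}^{t_1}$, which gives the uniform bound $1/d\leq 2q_{n+1}^{1-t_1}\leq e^{(1-t_1)(\beta+o(1))q_n}$, hence a per-step contraction $e^{-(L(E)-(1-t_1)\beta-o(1))q_n}$ with no Stirling argument needed; this is how the paper's recursion (\ref{eq_r_j_estimate}) is obtained and then simply iterated $l-\lfloor l/q_n\rfloor$ times against the a priori polynomial bound.
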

The proof of Proposition \ref{prop_liouville_resonant_growth} follows the lines of \cite[Lemma 7.1]{JLT} with slight adaptations, since we use slightly different estimates.
\begin{proof}
	By our assumptions on $k$, there exist $l,r\in\mathbb{Z}$ such that $k=lq_n+r$, $q_nq_{n+1}^{t_1}\leq\left|l\right|\leq2\frac{q_{n+1}^{t_2}}{q_n}$ and $\left|r\right|\leq b_n$. For simplicity, we assume that $l>0$. Let $j\in\mathbb{Z}$ such that $\floor{\frac{l}{q_n}}\leq\left|j\right|\leq2l$. Denote $r_j=\underset{\left|r\right|\leq b_n}{\max}\left|\phi\left(jq_n+r\right)\right|$. Let us first show that
	\begin{equation}\label{eq_r_j_estimate}
		r_j\leq\max\left\{r_{j\pm k}\exp\left(-\left(L\left(E\right)-\left(1-t_1\right)\beta-o\left(1\right)\right)q_n\right):k\in\left\{1,2\right\}\right\},
	\end{equation}
	where $o\left(1\right)$ is as $n\to\infty$. Let $I_1,I_2\subseteq\mathbb{Z}$ be given by
	\begin{center}
		$I_1=\left[-\floor{\frac{3q_n}{2}},q_n-\floor{\frac{3q_n}{2}}-1\right]$,\\
		$I_2=\left[jq_n-\floor{\frac{3q_n}{2}},\left(j+1\right)q_n-\floor{\frac{3q_n}{2}}-1\right]$
	\end{center}
	and let $j_0,n_1,n_2$ be given by Lemma \ref{JK_Sec_6_Lemma}. It is not hard to see that there exists $p\in\mathbb{Z}$ such that $\left[pq_n-b_n,pq_n+b_n\right]\subseteq\left[n_1,n_2\right]$, and for every \mbox{$s\in\left[pq_n-b_n,pq_n+b_n\right]$}, (\ref{eq_s_cond}) holds. Thus, by Lemma \ref{JK_Sec_6_Lemma}, for every such $s$, (\ref{eq_JK_sec_6}) holds. Noting that $\left|j\right|\geq\floor{\frac{l}{q_n}}$ and $l\geq q_nq_{n+1}^{t_1}$, we obtain
	\begin{center}
		$\left|G_{J}\left(s,n_i\right)\right|\leq\frac{e^{-\left|s-n_i\right|L\left(E\right)\left(1-o\left(1\right)\right)}}{\frac{d}{2}}\leq e^{\left(\left(1-t_1\right)\beta+o\left(1\right)\right)q_n}e^{-\left|pq_n+r-n_i\right|L\left(E\right)}$
	\end{center}
%	\begin{center}
%		$P_{\nu}\left(x+j_0\alpha\right)\geq e^{\log\frac{l}{2qn}+2q_n\left(L\left(E\right)-\frac{\beta_n}{2}-o\left(1\right)\right)}\geq e^{\log q_nq_{n+1}^{t_1}-\log 2q_n+2q_n\left(L\left(E\right)-\frac{\beta_n}{2}-o\left(1\right)\right)}=e^{2q_n\left(L\left(E\right)-\left(1-t_1\right)\frac{\beta_n}{2}-o\left(1\right)\right)}$.
%	\end{center}
%	Let $J=\left[j_0,j_0+2q_n-2\right]\coloneqq\left[n_1,n_2\right]$. Note that if $j_0\in I_1$, then $0\in J$ and if $j_0\in I_2$, then $jq_n\in J$. Let $p$ denote the number that lies in $J$ in each case. Our goal is to estimate $\left|\phi\left(pq_n+r\right)\right|$ for $r\in\mathbb{Z}$ such that $\left|r\right|\leq b_n$. Combining (\ref{eq_det_lower_bound}) with (\ref{eq_det_estimate}), (\ref{eq_cramer_1}) and (\ref{eq_cramer_2}), for every such $r$ we have
%	\begin{align*}
%		\left|G_J\left(pq_n+r,n_i\right)\right|\leq\left|\frac{e^{\left(L\left(E\right)+o\left(1\right)\right)\left(2q_n-2-\left|pq_n+r-n_i\right|\right)}}{e^{\left(L\left(E\right)-\left(1-t_1\right)\frac{\beta_n}{2}-o\left(1\right)\right)2q_n}}\right|\leq\left|\frac{e^{\left(L\left(E\right)+o\left(1\right)\right)\left(2q_n-\left|pq_n+r-n_i\right|\right)}}{e^{\left(L\left(E\right)-\left(1-t_1\right)\frac{\beta_n}{2}-o\left(1\right)\right)2q_n}}\right|\leq\\ e^{\left(\left(1-t_1\right)\beta_n+o\left(1\right)\right)q_n}e^{-\left|pq_n+r-n_i\right|L\left(E\right)}\leq e^{\left(\left(1-t_1\right)\beta+o\left(1\right)\right)q_n}e^{-\left|pq_n+r-n_i\right|L\left(E\right)}
%	\end{align*}
	for $i=1,2$. Now, using the expansion formula (\ref{sol_exp_green_fn_eq}), we obtain
	\begin{equation}\label{eq_1_qn+r}
		\left|\phi\left(pq_n+r\right)\right|\leq\sum\limits_{i=1,2}e^{\left(\left(1-t_1\right)\beta+o\left(1\right)\right)q_n}\left|\phi\left(n_i'\right)\right|e^{-\left|pq_n+r-n_i\right|L\left(E\right)}
	\end{equation}
	where $n_i'=n_i+\left(-1\right)^i$. Now, let us analyze the factor $\left|\phi\left(n_i'\right)\right|e^{-\left|pq_n+r-n_i\right|L\left(E\right)}$ by splitting into several cases depending on the location of $n_i'$.
	\begin{enumerate}
		\item $\text{dist}\left(n_i',q_n\mathbb{Z}\right)\leq b_n$. In that case, $n_i'\in R_q$ for some $q$ and so $\left|\phi\left(n_i'\right)\right|\leq r_q$. Furthermore, by the definition of $n_i$ we have $q\in\left\{p-1,p+1\right\}$. This also implies that $\left|pq_n+r-n_i\right|\geq q_n-2b_n$. In that case, we get
		\begin{equation}
			\left|\phi\left(n_i'\right)\right|e^{-\left|pq_n+r-n_i\right|L\left(E\right)}\leq r_qe^{\left(q_n-2r\right)L\left(E\right)}\leq e^{\left(L\left(E\right)-o\left(1\right)\right)q_n}r_q.
		\end{equation}
		\item $\text{dist}\left(n_i',q_n\mathbb{Z}\right)\geq b_n$. Here, we split into cases again.
		\begin{enumerate}
			\item $n_i\in\left[\left(p-2\right)q_n,\left(p-1\right)q_n\right]$. In that case, by Corollary \ref{cor_growth_between_resonances}, we get
			\begin{center}
				$\left|\phi\left(n_i'\right)\right|e^{-\left|pq_n+r-n_i\right|L\left(E\right)}\leq \left(e^{-\left(L\left(E\right)-o\left(1\right)\right)\left(1-o\left(1\right)\right)\left|n_i'-\left(p-2\right)q_n+b_n\right|}r_{p-2}+e^{-\left(L\left(E\right)-o\left(1\right)\right)\left(1-o\left(1\right)\right)\left|n_i'-\left(p-1\right)q_n+b_n\right|}r_{p-1}\right)e^{-\left|pq_n+r-n_i\right|L\left(E\right)}$.
			\end{center}
			Now, note that
			\begin{center}
				$\left|pq_n+r-n_i\right|+\left|x_i'-\left(p-2\right)q_n+b_n\right|\geq 2q_n-2\left|b_n\right|$,\\
				$\left|pq_n+r-n_i\right|+\left|x_i'-\left(p-1\right)q_n+b_n\right|\geq q_n-2\left|b_n\right|$.
			\end{center}
			After absorbing factors into $o\left(1\right)$, we obtain
			\begin{center}
				$\left|\phi\left(n_i'\right)\right|e^{-\left|pq_n+r-n_i\right|L\left(E\right)}\leq e^{-\left(L\left(E\right)-o\left(1\right)\right)\left(2q_n\right)}r_{p-2}+e^{\left(L\left(E\right)-o\left(1\right)\right)q_n}r_{p-1}$.
			\end{center}
			Denoting $C_1=\max\left\{r_{p-2}r_{p-1}\right\}$, we get
			\begin{equation}
				\left|\phi\left(n_i'\right)\right|e^{-\left|pq_n+r-n_i\right|L\left(E\right)}\leq 2e^{\left(L\left(E\right)-o\left(1\right)\right)q_n}C_1.
			\end{equation}
			\item $n_i\in\left[\left(p-1\right)q_n,pq_n\right]$. Applying the same methods as in the previous case and using the fact that $\left|pq_n+r-n_i\right|+\left|n_i'-pq_n\right|\geq q_n-\left|b_n\right|$, one obtains
			\begin{equation}
				\left|\phi\left(n_i'\right)\right|e^{-\left|pq_n+r-n_i\right|L\left(E\right)}\leq e^{-\left(L\left(E\right)-o\left(1\right)\right)\left(q_n\right)}r_{p-1}+e^{\left(L\left(E\right)-o\left(1\right)\right)q_n}r_{p}.
			\end{equation}
		\end{enumerate}
	\end{enumerate}
	Let $M=\max\left\{r_k:k\in\left\{-2,-1,0,1,2\right\}\right\}$. Plugging all of this into (\ref{eq_1_qn+r}), we obtain
	\begin{equation}
		\left|\phi\left(pq_n+r\right)\right|\leq e^{-\left(L\left(E\right)-\left(1-t_1\right)\beta+o\left(1\right)\right)q_n}M.
	\end{equation}
	Now, $M=r_p$ leads to $r_p<r_p$ for large enough $n$ and so we must have $M\in\left\{r_{p-1},r_{p+1}\right\}$. This implies
	\begin{center}
		$r_p\leq\max\left\{r_{p\pm k}\exp\left(-\left(L\left(E\right)-\left(1-t_1\right)\beta-o\left(1\right)\right)q_n\right):k\in\left\{1,2\right\}\right\}$.
	\end{center}
	Note that for $p=0$ and sufficiently large $n$, the RHS of this inequality is arbitrarily small. On the other hand, by the normalization condition on $\phi$ (\ref{eq_norm_gen_ef}), $r_0\geq\frac{1}{2}$ and so the case $p=0$ is not allowed. Thus, we get (\ref{eq_r_j_estimate}), as required.
	
	Now, starting with $j=l$ and iterating iterating (\ref{eq_r_j_estimate}) $\frac{1}{2}\left(l-\floor{\frac{l}{q_n}}\right)$ times, we obtain
	\begin{center}
		$\left|\phi\left(k\right)\right|\leq r_l\leq r_qe^{-\frac{1}{2}\left(L\left(E\right)-\left(1-t_1\right)\beta-\o\left(1\right)\right)\left(l-\floor{\frac{l}{q_n}}\right)q_n}$.
	\end{center}
	Note that $q\leq 2l-\floor{\frac{l}{q_n}}$ and by (\ref{gen_ev_estimate_eq}) we  have
	\begin{center}
		$\left|\phi\left(k\right)\right|\leq C_E\left(1+2lq_n\right)e^{-\frac{1}{2}\left(L\left(E\right)-\left(1-t_1\right)\beta-o\left(1\right)\right)\left(l-\floor{\frac{l}{q_n}}\right)q_n}=$.
	\end{center}
	Now, choosing $o\left(1\right)$ correctly yields (\ref{eq_phi_k_estimate}).
\end{proof}
\begin{lemma}\label{prod_L_norms_growth_lemma}
	For $\mu$-almost every $E\in\mathbb{R}$, for every $\varepsilon>0$, for large enough $L>0$ we have
	\begin{equation}\label{est_on_prod_L_norms_1}
		\omega_+\left(L\right)=\underset{\theta}{\max}\|u_{\theta}\|_L^+\cdot\underset{\theta}{\min}\|u_{\theta}\|_L^+\geq L^{1+\frac{1}{2}\frac{L\left(E\right)}{t_1\beta}-\varepsilon}
	\end{equation}
	and
	\begin{equation}\label{est_on_prod_L_norms_2}
		\omega_-\left(L\right)=\underset{\theta}{\max}\|u_{\theta}\|_L^-\cdot\underset{\theta}{\min}\|u_{\theta}\|_L^-\geq L^{1+\frac{1}{2}\frac{L\left(E\right)}{t_1\beta}-\varepsilon}.
	\end{equation}
\end{lemma}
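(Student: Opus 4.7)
The plan is to combine three ingredients: exponential decay of a polynomially-bounded generalized eigenfunction at resonant sites (Proposition \ref{prop_liouville_resonant_growth}), existence of abundant regular points in the non-resonant complement (Propositions \ref{prop_diophantine_transition} and \ref{JK_Liouville_Lemmas}), and the constancy of the Wronskian (\ref{eq_const_wronskian}) to trade decay of one solution for growth of its orthogonal-phase partner. The base scale is $L_n \simeq q_{n+1}^{t_1}$, chosen so that $\log L_n \approx t_1 \beta q_n$ and hence $e^{L(E) q_n/2} \approx L_n^{L(E)/(2 t_1 \beta)}$, which supplies the exponential factor in the target exponent.

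First I would fix $E$ in a set of full $\mu$-measure for which a generalized eigenfunction $\phi$ exists with the polynomial bound (\ref{gen_ev_estimate_eq}) and the Last--Simon bound (\ref{eq_last_simon}), identify $\phi = u_{\theta_*}$ for its boundary phase, and write $\tilde\theta_* = \theta_* + \pi/2 \mod \pi$. Next I would invoke Propositions \ref{prop_diophantine_transition} and \ref{JK_Liouville_Lemmas} (splitting into the Diophantine case $q_{n+1} \leq q_n^C$ and the Liouville case $q_{n+1} > q_n^C$) to exhibit a collection $\mathcal{S} \subseteq [1, L_n]$ of density at least $1 - 2\tau$ such that both $k$ and $k+1$ are $(L(E) - o(1), q_n)$-regular with regularity intervals contained in $[1, L_n]$ for every $k \in \mathcal{S}$; this is made possible because the resonant set $\bigcup_j R_j$ occupies only a $\tau$-fraction of $[1, L_n]$.

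Then at each such regular pair I would use (\ref{sol_exp_green_fn_eq}) to obtain $|u_{\theta_*}(k)|, |u_{\theta_*}(k+1)| \leq 2 r e^{-L(E) q_n/2}$ with $r = \max_{j \in [1,L_n]} |u_{\theta_*}(j)|$, and invoke the Wronskian identity (\ref{eq_const_wronskian}) to conclude
\begin{equation*}
\left|u_{\tilde\theta_*}(k)\right| + \left|u_{\tilde\theta_*}(k+1)\right| \geq \frac{e^{L(E) q_n/2}}{2r}.
\end{equation*}
Summing squares over $\mathcal{S}$ gives $\left(\|u_{\tilde\theta_*}\|_{L_n}^+\right)^2 \gtrsim |\mathcal{S}|\, e^{L(E) q_n}/r^2 \gtrsim L_n\, e^{L(E) q_n}/r^2$. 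Combining this with the generic Wronskian-sum inequality $\|u_{\theta_*}\|_L^+ \cdot \|u_{\tilde\theta_*}\|_L^+ \gtrsim L$ (obtained by applying Cauchy--Schwarz to (\ref{eq_const_wronskian}) and summing) together with the elementary $r^2 \leq (\|u_{\theta_*}\|_L^+)^2$, I would then extract the product-type estimate $\omega_+(L_n) \geq L_n^{1 + L(E)/(2 t_1 \beta) - \varepsilon}$. Finally I would propagate from the sparse sequence $\{L_n\}$ to all large $L$ via monotonicity of $\omega_+$ in $L$, using $t_1 < 1$ to ensure the exponent persists in the gaps; the proof of (\ref{est_on_prod_L_norms_2}) is identical with $\mathbb{Z}_-$ in place of $\mathbb{Z}_+$.

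The main obstacle is extracting the multiplicative combination $L \cdot e^{L(E) q_n/2}$ from individually weaker estimates: the regularity-based Wronskian transfer alone yields only $\sqrt{L} \cdot e^{L(E) q_n/2}$ (missing a factor of $L^{1/2}$), while the naive global Wronskian sum alone gives just $L$, and one must carefully interpolate between them by exploiting that $\mathcal{S}$ has near-full density and that the argmin-phase bounds both quantities simultaneously. A secondary subtlety is the propagation step, since in the Liouville case $q_{n+2}$ may be exponentially large in $q_{n+1}$, so $L_n$ must be chosen with just enough room for consecutive scales to interlock tightly and for monotonicity of $\omega_\pm$ to preserve the exponent throughout.
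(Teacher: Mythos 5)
Your skeleton (regular pairs away from resonances, Wronskian transfer of smallness into growth of the orthogonal phase, conversion of $e^{cq_n}$ into a power of $L$ via $q_{n+1}\leq e^{(\beta+\varepsilon)q_n}$ and $L\lesssim q_n^2q_{n+1}^{t_1}$) is the right one, but the central quantitative step is not closed. As you yourself note, summing the per-pair Wronskian bound with the \emph{global} maximum $r$ only yields $\omega_+\gtrsim\sqrt{L}\,e^{L(E)q_n/2}$, short of the target by $L^{1/2}$, and the proposed fix ("carefully interpolate", "the argmin-phase bounds both quantities simultaneously") is not an argument. Two concrete ideas are missing. First, the phase at which you run the smallness estimate must be the \emph{minimizer} of $\|u_\theta\|_L^+$, not the generalized-eigenfunction phase $\theta_*$: since $\|u_{\theta_*}\|_L^+\geq\min_\theta\|u_\theta\|_L^+$, the product $\|u_{\theta_*}\|_L^+\|u_{\tilde\theta_*}\|_L^+$ does not bound $\omega_+=\min\cdot\max$ from below, whereas (\ref{sol_exp_green_fn_eq}) applies to every solution, so nothing is lost by taking the minimizing $\theta$, and then $r$ and the window maxima are controlled by $\min_\theta\|u_\theta\|_L^+$. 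Second, the $\sqrt L$ deficit is removed by working with the \emph{local} maxima $\overline r_j=\max\{r_j,r_{j+1}\}$ over the resonant windows $R_j$ rather than the global $r$: one gets $(\|u_\theta\|_L^+)^2\gtrsim\sum_j\overline r_j^{\,2}$ and $(\|u_{\theta+\frac\pi2}\|_L^+)^2\gtrsim e^{(L(E)-\varepsilon)\frac{q_n}{2}}\sum_j\overline r_j^{\,-2}$, and then Cauchy--Schwarz, $\bigl(\sum_j\overline r_j^{\,2}\bigr)\bigl(\sum_j\overline r_j^{\,-2}\bigr)\geq p^2\approx(L/q_n)^2$, produces the full factor $L^2$ independently of how the mass of $u_\theta$ is distributed among the windows. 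This is exactly the device in the paper's Case 4, and without it the exponent $1+\frac12\frac{L(E)}{t_1\beta}$ is unreachable by your scheme.

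The second gap is the propagation step. $\omega_\pm$ is indeed monotone in $L$, but monotonicity from the sparse scales $L_n\simeq q_{n+1}^{t_1}$ cannot preserve a \emph{polynomial} lower bound across Liouville gaps: for $L\in(L_n,L_{n+1})$ it only gives $\omega_+(L)\geq L_n^{1+\kappa}$, while $L_{n+1}/L_n=(q_{n+2}/q_{n+1})^{t_1}$ is unbounded (and for $L<L_n$ monotonicity runs the wrong way), so "$t_1<1$" does not rescue this. The paper instead argues at every scale, choosing $n$ with $4q_n\leq L<4q_{n+1}$ and splitting into cases; the intermediate regime $20q_n^2q_{n+1}^{t_1}<L<4q_{n+1}$ is precisely where Proposition \ref{prop_liouville_resonant_growth} is deployed: decay of the normalized generalized eigenfunction at a resonant pair $k,k-1\sim q_n^2q_{n+1}^{t_1}$ inside $[1,L]$, together with its Wronskian partner and $\|\phi\|_L^+\geq1$, gives the super-polynomial bound $\omega_+(L)\gtrsim e^{cL^{t_1}}$, after which no propagation is needed (the remaining regimes are handled as in your Diophantine discussion via Propositions \ref{prop_diophantine_transition}, \ref{JK_Liouville_Lemmas} and \ref{prop_green_omega_estimate}, or by Corollary \ref{cor_growth_between_resonances} when $q_n>L^\varepsilon$). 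You list Proposition \ref{prop_liouville_resonant_growth} as an ingredient but never actually use it where it is needed, so as written the proposal does not establish (\ref{est_on_prod_L_norms_1}) for all large $L$.
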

Lemma \ref{prod_L_norms_growth_lemma} is precisely \cite[Lemma 8.2]{JLT}, however the proof itself requires slight adaptations, again due to different decay estimates.
\begin{proof}
	We will prove (\ref{est_on_prod_L_norms_1}), the proof of (\ref{est_on_prod_L_norms_2}) is similar. Given $L>0$, let $n\in\mathbb{N}$ be such that $4q_n\leq L<4q_{n+1}$. Also, let $\theta\in\left[0,\pi\right)$ be such that
	\begin{center}
		$\|u_\theta\|_L^+=\underset{\theta'\in\left[0,\pi\right)}{\min}\|u_{\theta'}\|_L^+$.
	\end{center}
	Denote by $\phi$ the generalized eigenfunction corresponding with $E$. Finally, fix some \mbox{$D>1+\frac{1}{2}\frac{L\left(E\right)}{t_1\beta}-\varepsilon$}. We divide the proof into cases.
	
	\textbf{Case 1:} $q_{n+1}^{t_2}\leq 20q_n^2q_{n+1}^{t_1}$. In that case, under the notations of the previous section, we have $q_{n+1}\leq q_n^{C}$. By Proposition \ref{prop_diophantine_transition}, we get that for $y_1=q_n,y_2=q_n+1$ there exist $n_1,n_2\in\mathbb{Z}$ such that $y_1\in\left[n_1,n_1+q_n-1\right]$, $y_2\in\left[n_2,n_2+q_n-1\right]$, and for $i=1,2$ we have
	\begin{center}
		$\left|G_{\left[n_1,n_1+q_n-1\right]}\left(y_i,n_i\right)\right|,\left|G_{\left[n_1,n_1+q_n-1\right]}\left(y_i,n_i+q_n-1\right)\right|\leq e^{-\left(L\left(E\right)-o\left(1\right)\right)\frac{q_n}{2}}$.
	\end{center}
	By Proposition \ref{prop_green_omega_estimate}, we obtain
	\begin{center}
		$\omega_+\left(L\right)>\frac{e^{\frac{1}{2}\left(L\left(E\right)-o\left(1\right)\right)q_n}}{4}$.
	\end{center}
	For sufficiently large $n$ we have
	\begin{equation}
		\frac{e^{\frac{1}{2}\left(L\left(E\right)-o\left(1\right)\right)q_n}}{4}>4q_n^{DC}>4q_{n+1}^D>L^D>L^{1+\frac{1}{2}\frac{L\left(E\right)}{t_1\beta}-\varepsilon},
	\end{equation}
	which implies (\ref{est_on_prod_L_norms_1}).

	\textbf{Case 2:} $20q_n^2q_{n+1}^{t_1}<q_{n+1}^{t_2}$, $L>20q_n^2q_{n+1}^{t_1}$. In that case, there exist a pair of points \mbox{$k,k-1\in\left[3q_n^2q_{n+1}^{t_1}-q_n,3q_n^2q_{n+1}^{t_1}\right]$} which are $n$-resonant and satisfy $2q_n^2q_{n+1}^{t_1}\leq k,k-1\leq b_{n+1}$. By Proposition \ref{prop_liouville_resonant_growth}, we have
	\begin{center}
		$\left|\phi\left(k\right)\right|,\left|\phi\left(k-1\right)\right|\leq e^{-\frac{1}{2}\left(L\left(E\right)-\left(1-t_1\right)\beta-\varepsilon\right)k}$.
	\end{center}
	Let $\psi$ be the solution for which the Wronskian of $\psi$ and $\phi$ is $1$, namely
	\begin{center}
		$\phi\left(m\right)\psi\left(m+1\right)-\phi\left(m+1\right)\psi\left(m\right)=1$
	\end{center}
	for all $m\in\mathbb{Z}$. Then we have
	\begin{center}
		$\max\left\{\left|\psi\left(k\right)\right|,\left|\psi\left(k-1\right)\right|\right\}\geq\frac{1}{2}e^{\frac{1}{2}\left(L\left(E\right)-\left(1-t_1\right)\beta-\varepsilon\right)k}$
	\end{center}
	which in turn imlpies that $\|\psi\|_L^+\geq\frac{1}{2}e^{\frac{1}{2}\left(L\left(E\right)-\left(1-t_1\right)\beta-\varepsilon\right)k}$. In addition, since $\phi$ is normalized (\ref{eq_norm_gen_ef}), we have $\|\phi\|_L^+\geq 1$. Thus, we get
	\begin{center}
		$\omega_+\left(L\right)\geq\|\psi\|_L^+\geq \frac{1}{2}e^{\frac{1}{2}\left(L\left(E\right)-\left(1-t_1\right)\beta-\varepsilon\right)k}\geq\frac{1}{2}e^{\frac{1}{2}\left(L\left(E\right)-\left(1-t_1\right)\beta-\varepsilon\right)2q_n^2q_{n+1}^{t_1}}\geq\frac{1}{2}e^{L^{\frac{t_1}{2}}}$.
	\end{center}
	Taking $L$ large enough so that $e^{L}\geq \left(L^{1+\frac{1}{2}\frac{L\left(E\right)}{t_1\beta}-\varepsilon}\right)^{\frac{2}{t_1}}$ concludes the proof of Case 2.
	
	\textbf{Case 3:} $20q_n^2q_{n+1}^{t_1}<q_{n+1}^{t_2}$, $L\leq 20q_n^2q_{n+1}^{t_1}$, $q_n>L^\varepsilon$. In that case, let $k=\floor{\frac{q_n}{2}}$. By Corollary \ref{cor_growth_between_resonances},
	\begin{center}
		$\left|u_{\theta}\left(k\right)\right|,\left|u_\theta\left(k-1\right)\right|\leq e^{-\left(L\left(E\right)-\varepsilon\right)\left(1-\varepsilon\right)\frac{q_n}{4}}\left(r_0+r_1\right)\leq e^{-\left(L\left(E\right)-\varepsilon\right)\left(1-\varepsilon\right)\frac{q_n}{4}}r$,
	\end{center}
	where $r=\max\left\{r_0,r_1\right\}$. Now one can proceed exactly as in Case 1.
	
	\textbf{Case 4:} $20q_n^2q_{n+1}^{t_1}<q_{n+1}^{t_2}$, $L\leq 20q_n^2q_{n+1}^{t_1}$, $q_n\leq L^\varepsilon$. Let $p=\floor{\frac{L}{4q_n}}>1$. For $j=0,\ldots,p-1$, let $k_j=jq_n+\floor{\frac{q_n}{2}}$. Denote $\overline{r}_j=\max\left\{r_j,r_{j+1}\right\}$. By Corollary \ref{cor_growth_between_resonances}, we have
	\begin{center}
		$\left|u_\theta\left(k_j\right)\right|,\left|u_\theta\left(k_j-1\right)\right|\leq e^{-\left(L\left(E\right)-\varepsilon\right)\frac{q_n}{4}}\overline{r}_j$.
	\end{center}
	This implies that
	\begin{equation}\label{eq_case_4_1}
		\max\left\{\left|u_{\theta+\frac{1}{4}}\left(k_j\right)\right|,\left|u_{\theta+\frac{1}{4}}\left(k_j-1\right)\right|\right\}\geq\frac{1}{2\overline{r}_j}e^{\left(L\left(E\right)-\varepsilon\right)\frac{q_n}{4}}.
	\end{equation}
	Using (\ref{eq_case_4_1}) and the fact that $r_0,\ldots,r_p\in \left[0,L\right]$, for large enough $L$ we obtain
	\begin{center}
		$\sum\limits_{j=0}^{p-1} \overline{r}_j^2\leq2\sum\limits_{j=0}^p r_j^2\leq2\left(\|u_\theta\|_L^+\right)^2$,\\
		$\left(\|u_{\theta+\frac{1}{4}}\|_L^+\right)^2\geq\sum\limits_{j=0}^{p-1}\frac{e^{\left(L\left(E\right)-\varepsilon\right)\frac{q_n}{4}}}{2\overline{r}_j^2}$.
	\end{center}
	Now, we have
	\begin{center}
		$\left(\omega_+\left(L\right)\right)^2\geq\left(\|u_\theta\|_L^+\right)^2\left(\|u_{\theta+\frac{1}{4}}\|_L^+\right)^2\geq\frac{e^{\left(L\left(E\right)-\varepsilon\right)q_n}}{2}\left(\sum\limits_{j=1}^{p-1}\overline{r}_j^2\right)\left(\sum\limits_{j=1}^{p-1}\frac{1}{\overline{r}_j^2}\right)\geq\frac{e^{\left(L\left(E\right)-\varepsilon\right)q_n}}{2q_n^2}L^2$,
	\end{center}
	where the last inequality follows from Cauchy-Schwartz inequality. By the definition of $\limsup$, we have that for large enough $L$ (and consequently large enough $n$), $q_{n+1}\leq \left(e^{q_n}\right)^{\beta+\varepsilon}$. In addition, we may assume that $2q_n^2\leq L^\varepsilon$. Thus, we obtain
	\begin{center}
		$\frac{e^{\left(L\left(E\right)-\varepsilon\right)q_n}}{2q_n^2}L^2\geq e^{\left(L\left(E\right)-\varepsilon\right)q_n}L^{2-\varepsilon}=\left(\left(e^{q_n}\right)^{\beta+\varepsilon}
		\right)^{\frac{L\left(E\right)-\varepsilon}{\beta+\varepsilon}}\geq \left(q_{n+1}\right)^{\frac{L\left(E\right)}{\beta}}$.
	\end{center}
	Finally, using our assumption $L\leq 20q_n^2q_{n+1}^{t_1}$ which means that $q_{n+1}\geq\left(\frac{L}{20q_n^2}\right)^{\frac{1}{t_1}}$, and so
	\begin{center}
		$\left(q_{n+1}\right)^{\frac{L\left(E\right)}{\beta}}\geq \left(\left(\frac{L}{20q_n^2}\right)^{\frac{1}{t_1}}\right)^{\frac{L\left(E\right)}{\beta}}$
	\end{center}
	which concludes the proof of Case 4 and of Lemma \ref{prod_L_norms_growth_lemma}.
\end{proof}
Using Lemma \ref{prod_L_norms_growth_lemma}, the following can be obtained following the lines of \cite[Lemma 9.2]{JLT}. A proof can be found in the Appendix.
\begin{lemma}\label{hl_borel_lower_bound}
	For $\mu$-almost every $E\in\mathbb{R}$ there exists some $\theta\in\left[0,\pi\right)$ such that for every \mbox{$t\in\left(0,\frac{L\left(E\right)}{2\beta-L\left(E\right)}\right)$} there exists $\varepsilon_0>0$ such that for every $\varepsilon<\varepsilon_0$,
	\begin{equation}
		\im m_{\theta}^\pm\left(E+i\varepsilon\right)\geq\varepsilon^{-t}.
	\end{equation}
\end{lemma}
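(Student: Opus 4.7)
The plan is to combine Lemma~\ref{prod_L_norms_growth_lemma} with the sublinear $\ell^2$-bound \eqref{eq_last_simon} and the boundary-value estimate in Theorem~\ref{Appendix_thm}. I would first fix $E$ in the full $\mu$-measure set where both Lemma~\ref{prod_L_norms_growth_lemma} applies and the generalized-eigenfunction bound \eqref{eq_last_simon} holds. By \eqref{eq_last_simon}, there is a solution $u$ of \eqref{ev_eq_line_op} with $\|u\|_L^- + \|u\|_L^+ \leq C(E)\, L^{1/2} \log L$; after rescaling so that $|u(0)|^2 + |u(1)|^2 = 1$, I let $\theta \in [0,\pi)$ be the unique angle for which $u = u_\theta$. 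The whole point of using \eqref{eq_last_simon} (rather than the $\theta$-minimizing phase at each scale) is that this $\theta$ is fixed once and for all and simultaneously controls both $m_\theta^+$ and $m_\theta^-$.

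With this choice, $b_\pm(L) = (\|u_\theta\|_L^\pm)^2 \leq C(E)^2\, L \log^2 L$, so Theorem~\ref{Appendix_thm} gives
\[
\im m_\theta^\pm(E + i\varepsilon) \;\geq\; \frac{1}{C\, \varepsilon\, b_\pm(L_\pm(\varepsilon))} \;\geq\; \frac{c}{\varepsilon\, L_\pm(\varepsilon) \log^2 L_\pm(\varepsilon)}.
\]
On the other hand, Lemma~\ref{prod_L_norms_growth_lemma} yields, for any $\delta>0$, that $\omega_\pm(L) \geq L^{1+\mu}$ for all sufficiently large $L$, where $\mu := \tfrac{1}{2}\, L(E)/(t_1 \beta) - \delta$. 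Since $\omega_\pm$ is nondecreasing and $\omega_\pm(L_\pm(\varepsilon)) = 1/\varepsilon$, this forces $L_\pm(\varepsilon) \leq \varepsilon^{-1/(1+\mu)}$ once $\varepsilon$ is small. Substituting,
\[
\im m_\theta^\pm(E + i\varepsilon) \;\geq\; \frac{c\, \varepsilon^{-\mu/(1+\mu)}}{\log^2(1/\varepsilon)},
\]
which dominates $\varepsilon^{-t}$ for any $t < \mu/(1+\mu)$ once $\varepsilon$ is small enough.

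Finally, recall $t_1 = (\beta - L(E))/\beta + \sigma$ with $\sigma>0$ free, and $\delta>0$ free; a direct computation gives
\[
\frac{\mu}{1+\mu} \;\longrightarrow\; \frac{L(E)/\bigl(2(\beta-L(E))\bigr)}{1 + L(E)/\bigl(2(\beta-L(E))\bigr)} \;=\; \frac{L(E)}{2\beta - L(E)}
\]
as $\sigma, \delta \to 0^+$. Thus, given any prescribed $t < L(E)/(2\beta - L(E))$, I would choose $\sigma, \delta$ small enough that $t < \mu/(1+\mu)$; the bound then holds for all $\varepsilon < \varepsilon_0(t)$ while $\theta$ stays fixed. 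The step requiring most care is ensuring that the \emph{same} $\theta$ works uniformly for all $t$ in the allowed range and for both signs $\pm$ simultaneously; this is the reason for selecting $\theta$ once via \eqref{eq_last_simon}, which bounds $\|u_\theta\|_L^+$ and $\|u_\theta\|_L^-$ on a single stroke. The logarithmic factor is harmless because $t$ is taken strictly below the critical exponent.
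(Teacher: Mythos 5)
Your proposal is correct and follows essentially the same route as the paper's appendix proof: fix $\theta$ via the Last--Simon bound (\ref{eq_last_simon}) so that one phase controls both half-lines, bound $L_\pm(\varepsilon)$ through Lemma \ref{prod_L_norms_growth_lemma} and the definition $\omega_\pm(L_\pm(\varepsilon))=1/\varepsilon$, feed this into Theorem \ref{Appendix_thm}, and let the auxiliary parameters ($\sigma$ and your $\delta$, the paper's $\eta$) tend to $0$ to reach any $t<\frac{L(E)}{2\beta-L(E)}$. The only cosmetic difference is that you keep the $\log^2$ factor explicitly where the paper absorbs it into $L^{1+\eta}$.
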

We can now prove Theorem \ref{main_thm}
\begin{proof}
	By Lemma \ref{hl_borel_lower_bound} and Proposition \ref{line_borel_hl_borel_prop}, we obtain that for every $t\in\left(0,\frac{L\left(E\right)}{2\beta-L\left(E\right)}\right)$ there exists $\varepsilon_0>0$ such that for every $0<\varepsilon<\varepsilon_0$,
	\begin{equation}\label{M_bd_eq}
		\im M\left(E+i\varepsilon\right)\geq\varepsilon^{-t}.
	\end{equation}
	If $L\left(E\right)\geq\beta\left(\alpha\right)$, then (\ref{M_bd_eq}) is true for every $t\in\left(0,1\right)$ and so we get that for every $\eta\in\left(0,1\right)$,
	\begin{center}
		$\underset{\varepsilon\to0}{\liminf}\,\varepsilon^{1-\eta}\im M\left(E+i\varepsilon\right)>0$,
	\end{center}
	which by Proposition \ref{packing_dim_borel_transform_prop} implies that $\gamma_\mu^+\left(E\right)=0$ and so by Lemma \ref{packing_derivative_infinity_lemma}, for every $\eta>0=2\left(1-\frac{\Lambda\left(E\right)}{\beta\left(\alpha\right)}\right)$, $\underline{D}_\mu^\eta\left(E\right)=\infty$, as required. Suppose now that $L\left(E\right)<\beta\left(\alpha\right)$. By (\ref{M_bd_eq}), we obtain that for every $\eta\in\left(\frac{2\beta-2L\left(E\right)}{2\beta-L\left(E\right)},1\right)$,
	\begin{equation}\label{M_bd_eq_2}
		\underset{\varepsilon\to0}{\liminf}\,\varepsilon^{1-\eta}\im M\left(E+i\varepsilon\right)>0.
	\end{equation}
	This implies, by Proposition \ref{packing_dim_borel_transform_prop}, that for every $\eta\in\left(\frac{2\beta-2L\left(E\right)}{2\beta-L\left(E\right)},1\right)$, $\gamma_\mu^+\left(E\right)\leq\frac{2\eta}{2-\eta}$. By the monotonicity of $x\to\frac{2x}{2-x}$, denoting $\eta_0=\frac{2\beta-2L\left(E\right)}{2\beta-L\left(E\right)}$ we obtain that
	\begin{center}
		$\gamma_\mu^+\left(E\right)\leq\frac{2\eta_0}{2-\eta_0}=\frac{2\beta-2L\left(E\right)}{\beta}=2\left(1-\frac{\Lambda\left(E\right)}{\beta\left(E\right)}\right)$.
	\end{center}
	Theorem \ref{main_thm} now follows from Lemma \ref{packing_derivative_infinity_lemma}.
\end{proof}

\appendix
\section{Proof of Lemma \ref{hl_borel_lower_bound}}
We prove Lemma \ref{hl_borel_lower_bound} following the lines of \cite[Lemma 9.2]{JLT}.
\begin{proof}[Proof of Lemma \ref{hl_borel_lower_bound}]
	We will prove that there exists $\theta\in\left[0,\pi\right)$ such that
	\begin{equation}\label{eq_appendix_0}
		\im m_{\theta}^+\left(E+i\varepsilon\right)\geq\varepsilon^{-t}.
	\end{equation}
	The proof for $m_\theta^-$ is similar.
	
	Fix $\eta>0$. Let $\theta\in\left[0,\pi\right)$ be such that $u_\theta$ (as defined in Section \ref{section_prelim}) satisfies (\ref{gen_ev_estimate_eq}) and (\ref{eq_norm_gen_ef}) (namely $u_\theta$ is the generalized eigenfunction). Following the notations of Section \ref{section_prelim}, given $L>0$, let
	\begin{center}
		$b_+\left(L\right)=\|u_\theta\|_L^+$,\\
		$\omega_+\left(L\right)\coloneqq\underset{\theta\in\left[0,\pi\right)}{\max}\|u_\theta\|_L^+\cdot\underset{\theta\in\left[0,\pi\right)}{\min}\|u_\theta\|_L^+$.
	\end{center}
	Recall that $u_\theta$ satisfies (\ref{eq_last_simon}) which implies
	\begin{equation}\label{eq_1_appendix}
		b_+\left(L\right)\leq C\left(E\right)L^{1+\eta}.
	\end{equation}
	Also let $L\left(\varepsilon\right)$ be such that $\omega_+\left(L\left(\varepsilon\right)\right)=\frac{1}{\varepsilon}$. By Lemma \ref{prod_L_norms_growth_lemma}, we have that for small enough $\varepsilon$ (and consequently large enough $L\left(\varepsilon\right))$,
	\begin{equation}
		\frac{1}{\varepsilon}=\omega_+\left(L\left(\varepsilon\right)\right)\geq L\left(\varepsilon\right)^{1+g},
	\end{equation}
	where $g=\frac{1}{2}\frac{L\left(E\right)}{t_1\beta}-\eta$. This implies that
	\begin{equation}\label{eq_2_appendix}
		L\left(\varepsilon\right)\leq\varepsilon^{-\frac{1}{1+g}}.
	\end{equation}
	By Theorem \ref{Appendix_thm}, (\ref{eq_1_appendix}) and (\ref{eq_2_appendix}) we obtain
	\begin{equation}
		\im m_\theta^+\left(E+i\varepsilon\right)\geq\frac{1}{C}\frac{1}{\varepsilon}\frac{1}{b_+\left(L\left(\varepsilon\right)\right)}\geq\varepsilon^{t_0},
	\end{equation}
	where
	\begin{center}
		$t_0=1-\frac{1+\eta}{1+g}=\frac{\beta-L\left(E\right)}{2\beta-L\left(E\right)+2\sigma\beta-\eta\beta-\eta L\left(E\right)}$.
	\end{center}
	Letting $\eta$ and $\sigma$ go to $0$, we will eventually have $t_0\in\left(t,\frac{L\left(E\right)}{2\beta-L\left(E\right)}\right)$, which implies (\ref{eq_appendix_0}), as required.
\end{proof}

\section{Proof of Lemma \ref{JK_Sec_6_Lemma}}\label{app_2}
We will prove Lemma \ref{JK_Sec_6_Lemma}, following the lines of \cite[Section 7.8]{JK}. We will use the following
\begin{theorem}\emph{\cite[Theorem 6.4]{JK}}\label{JK_thm_6.4}
	Let $n\in\mathbb{N}$, $x,m,h_0,\ldots,h_l\in\mathbb{Z}$. For every $j\in\left\{0,\ldots,l\right\}$, let $x_j\coloneqq x+h_j\alpha\mod 1$. Suppose that for every $i,j\in\left\{0,\ldots,l\right\}$, $\left|x_i-x_j\right|\geq d>0$. Let $\Lambda=\sigma\left(H_n\left(x\right)\right)$. Suppose that there exists an interval $K\subseteq\mathbb{R}$ and $\gamma>0$ such that $\left|K\right|<\left(10\gamma\right)^{-1}$, $x_0,\ldots,x_l\in K$ and
	\begin{center}
		$\Lambda\cap K\leq t\leq l$.
	\end{center}
	Finally, suppose that for every $z\in\Lambda\setminus K$, $\dist\left(z,K\right)>\gamma^{-1}$. Then there exists $j\in\left[0,l\right]$ such that for $J=\left[h_j,h_j+n\right]\coloneqq\left[n_1,n_2\right]$ and for every $m\in\mathbb{Z}$ which satisfies
		\begin{center}
		$h_j+\frac{n}{4}\leq m\leq h_j+\frac{3n}{4}$,
	\end{center}
	we have
	\begin{equation}\label{JK_thm_6.4_eq}
		\left|G_J\left(m,n_i\right)\right|\leq\frac{e^{-\left|m-n_i\right|L\left(E\right)\left(1-o\left(1\right)\right)}}{\gamma^t\left(\frac{d}{2}\right)^tt!}.
	\end{equation}
\end{theorem}
\begin{proof}[Proof of Lemma \ref{JK_Sec_6_Lemma}]
	Let $I=I_1\cup I_2$, and let $S=I\alpha\mod 1$. The following facts are proved in \cite[Section 7.8]{JK}:
	\begin{enumerate}
		\item $S$ is the union of $q_n$ disjoint intervals of length $\frac{2}{9q_n}$, each such interval contains exactly two points from $S$.
		\item The distance between two points in the same interval is $\left|j\right|\dist\left(q_n\alpha,\mathbb{Z}\right)$.
		\item The $\frac{1}{9q_n}$-neighborhoods of these intervals are disjoint.
	\end{enumerate}
	Let $\Lambda=\sigma\left(H_{2q_n-1}\right)$. Since $\left|\Lambda\right|\leq 2q_n-1$, there exists at least one interval $K$ such that $\left|K\cap\Lambda\right|\leq 1$. Note that this interval $K$ contains two points $x_0=p_0\alpha,x_1=p_1\alpha$ which satisfy the assumptions of Threom \ref{JK_thm_6.4} with $l=t=1$, $d=\left|j\right|\dist\left(q_n\alpha,\mathbb{Z}\right)$ and $\gamma=1$. Thus, we obtain some $j_0\in\left\{p_0,p_1\right\}$ such that for every $m\in\mathbb{Z}$, if $h_j+\frac{n}{5}\leq m\leq h_j+\frac{4n}{5}$, then (\ref{JK_thm_6.4_eq}) holds. Finally, it is shown in \cite[Section 7.8]{JK} that
	\begin{center}
		$\left|j\right|\dist\left(q_n\alpha,\mathbb{Z}\right)\geq e^{-\left(\beta+o\left(1\right)\right)q_n+\log\left|j\right|}$,
	\end{center}
	which concludes the proof.
\end{proof}

\begin{thebibliography}{1}
	
	\bibitem{AJ} A.~Avila and S.~Jitomirskaya, {\it The Ten Martini Problem}, Ann.\ of Math.\ \textbf{170} (2009), 303--342.
	
	\bibitem{AZY} A.~Avila, J.~You and Q.~Zhou, {\it Sharp phase transitions for the almost Mathieu operator}, Duke Math.\ J.\ \textbf{166} (2017), 2697--2718.
	
	\bibitem{Ber} Ju.~M.~Berezanski\u{i}, {\it Expansions in Eigenfunctions of Self-Adjoint Operators}, Transl.\ Math.\ Mono. \textbf{17}, Amer.\ Math.\ Soc., Providence, RI, 1968.
	
	\bibitem{CF} C.~Cedzich and J.~Fillman, {\it Absence of bound states for quantum walks and CMV matrices via reflections}, J.\ Spectr.\ Theory \textbf{14} (2024), 1513--1536.
	
	\bibitem{CFO} C.~Cedzich, J.~Fillman and D.~Ong, {\it Almost everything about the unitary almost Mathieu operator}, Commun.\ Math.\ Phys \textbf{403} (2023), 745--794.
	
	\bibitem{Cut} C.~D.~Cutler, {\it  Measure disintegrations with respect to $\sigma$-stable monotone indices and the pointwise representation of packing dimension}, Supp.\ Rend.\ Circ.\ Mat.\ Palermo \textbf{28} (1992), 319--339.
	
	\bibitem{DF1} D.~Damanik and J.~Fillman, {\it One-dimensional ergodic Schrödinger operators--I. General theory}, Grad.\ Stud.\ Math.\ \textbf{221}, American Mathematical Society, Providence, RI (2022).
	
	\bibitem{DF2} D.~Damanik and J.~Fillman, {\it One-dimensional ergodic Schrödinger operators--II. Specific classes}, Grad.\ Stud.\ Math.\ \textbf{221}, American Mathematical Society, Providence, RI (2025).
	
	\bibitem{DKL} D.~Damanik, R.~Killip and D.~Lenz, {\it Uniform Spectral Properties of One-Dimensional Quasicrystals, III. $\alpha$-continuity}, Commun.\ Math.\ Phys \textbf{212} (2000), 191--204.
	
	\bibitem{DT} D.~Damanik and S.~Tcheremchantsev, {\it Scaling estimates for solutions and dynamical lower bounds on wavepacket spreading}, J.\ Anal.\ Math \textbf{97} (2005), 103--131.
	
	\bibitem{DJLS} R.~Del Rio, S.~Jitomirskaya, Y.~Last and B.~Simon, {\it Operators with singular continuous spectrum, IV. Hausdorff dimensions, rank one perturbations, and localization}, J.\ Anal.\ Math \textbf{69} (1996), 153--200.
	
	\bibitem{GJ} L.~Ge and S.~Jitomirskaya, {\it Hidden subcriticality, symplectic structure, and universality of sharp arithmetic spectral results for type I operators}, preprint. arXiv:2407.08866.
	
	\bibitem{GP} D.~J.~Gilbert and D.~B.~Pearson, {\it On subordinacy and analysis of the spectrum of one-dimensional Schr\"{o}dinger operators}, J.\ Math.\ Anal.\ Appl. \textbf{128} (1987), 30--56.
	
	\bibitem{GSB} I.~Guarneri and H.~Schulz-Baldes, {\it Lower bounds on wave packet propagation by packing dimensions of spectral measures}, Math.\ Phys.\ Electron.\ J.\ \textbf{16} (1999), Paper 1.
	
	\bibitem{Jit0} S.~Jitomirskaya, {\it Anderson localization for the almost Mathieu equation: a nonperturbative proof}, Commun.\ Math.\ Phys.\ \textbf{165} (1994), 49--57.
	
	\bibitem{Jit} S.~Jitomirsaya, {\it Almost everything about the almost mathieu operator, II}, Proc.\ of XI Int.\ Congress of Math. Physics, Int.\ Press, Somerville, Mass.\ (1995), 373--382.
	
	\bibitem{Jit2} S.~Jitomirskaya, {\it  Metal-insulator transition for the almost Mathieu operator}, Ann.\ of Math.\ \textbf{150} (1999), 1159--1175. 
	
	\bibitem{JK1} S.~Jitomirskaya and I.~Kachkovskiy, {\it All couplings localization for quasiperiodic operators with monotone potentials}, J.\ Eur.\ Math.\ Soc.\ \textbf{21} (2019), 777--795.
	
	\bibitem{JK2} S.~Jitomirskaya and I.~Kachkovskiy, {\it Sharp arithmetic delocalization for quasiperiodic operators with potentials of semi-bounded variation}, preprint. arXiv:2408.16935.
	
	\bibitem{JK} S.~Jitomirskaya and I.~Kachkovskiy, {\it Sharp arithmetic localization for quasiperiodic operators with monotone potentials.}, preprint. 	arXiv:2407.00703.
	
	\bibitem{JL1} S.~Jitomirskaya and Y.~Last, {\it Power law subordinacy and singular spectra, I. Half-line operators}, Acta Math. \textbf{183} (1999), 171--189.
	
	\bibitem{JL2} S.~Jitomirskaya and Y.~Last, {\it Power law subordinacy and singular spectra, II. Line operators}, Commun.\ Math.\ Phys. \textbf{211} (2000), 643--658.
	
	\bibitem{JW} S.~Jitomirskaya and W.~Liu, {\it Universal hierarchical structure of quasiperiodic eigenfunctions}, Ann.\ of Math.\ \textbf{187} (2018), 721--776.
	
	\bibitem{JLT} S.~Jitomirskaya, W.~Liu and S.~Tcheremchantsev, {\it Lower bounds on concentration through Borel transform and quantitative singularity of spectral measures near the arithmetic transition}, preprint.
	
	\bibitem{JZ} S.~Jitomirskaya and S.~Zhang, {\it Quantitative continuity of singular continuous spectral measures and arithmetic criteria for quasiperiodic Schr\"{o}dinger operators}, J.\ Eur.\ Math.\ Soc.\ \textbf{24} (2022), 1723--1767.
	
	\bibitem{Kach} I.~Kachkovskiy, {\it Localization for quasiperiodic operators with unbounded monotone potentials}, J.\ Funct.\ Anal.\ \textbf{277} (2019), 3467--3490.
	
	\bibitem{Ker} J.~Kerdboon and X.~Zhu, {\it Anderson localization for Schr\"{o}dinger operators with monotone potentials over circle homeomorphisms}, J.\ Spectr.\ Theory \textbf{14} (2024), 1623--1646.
	
	\bibitem{KKL} R.~Killip, A.~Kiselev and Y.~Last, {\it Dynamical upper bounds on wavepacket spreading}, Amer.\ J.\ Math.\ \textbf{125} (2003), 1165--1198.
	
	\bibitem{KP} S.~Khan and D.~B.~Pearson, {\it Subordinacy and spectral theory for infinite matrices}, Helv.\ Phys.\ Acta \textbf{65} (1992), 505--527.
	
	\bibitem{LS} Y.~Last and B.~Simon, {\it Eigenfunctions, transfer matrices, and absolutely continuous spectrum of one-dimensional Schr\"{o}dinger operators}, Invent.\ Math.\ \textbf{135} (1999), 329--367.
	
	\bibitem{LY} W.~Liu and X.~Yuan, {\it Anderson localization for the almost Mathieu operator in the exponential regime}, J.\ Spectr.\ Theory \textbf{5} (2015), 89--112.
	
	\bibitem{LY1} W.~Liu and X.~Yuan, {\it Anderson localization for the completely resonant phases}, J.\ Funct.\ Anal.\ \textbf{268} (2015), 732--747.
	
	\bibitem{RT} C.~A.~Rogers and S.~J.~Taylor, {\it The analysis of additive set functions in Euclidean space}, Acta Math. \textbf{101} (1959), 273--302.
	
	\bibitem{Sim1} B.~Simon, {\it Spectral analysis of rank one perturbations and applications}, in “Proc. Mathematical Quantum Theory, II: Schr\"{o}dinger Operators” (Vancouver, Canada, 1993), CRM Proceedings and Lecture Notes, 8, American Mathematical Society, Providence, RI, 1995, 109–149.
	
	\bibitem{FY} F.~Yang, {\it Anderson localization for the unitary almost Mathieu operator}, Nonlinearity \textbf{37} (2024), 085010.

\end{thebibliography}
\end{document}